\documentclass{elsarticle}

\usepackage[utf8]{inputenc}
\usepackage{amsmath, amssymb, amsthm, amsfonts}
\usepackage{bm}
\usepackage{titlesec}
\usepackage{mathtools}
\usepackage{stmaryrd}
\usepackage{tikz, graphicx}
\usepackage{caption, subcaption}
\usepackage{hyperref, cleveref}
\usepackage[titletoc]{appendix}
\usepackage[normalem]{ulem}
\usepackage{geometry}
\usepackage{algorithm}
\usepackage{algpseudocode}

\usepackage{multirow}

\usetikzlibrary{positioning}

\theoremstyle{definition}

\theoremstyle{lemma}
\newtheorem{lemma}{Lemma}
\theoremstyle{theorem}

\theoremstyle{assumption}

\renewcommand{\hat}[1]{\widehat{#1}}

\newcommand{\td}[2]{\frac{{\rm d}#1}{{\rm d}{ {#2}}}} 
\newcommand{\pd}[2]{\frac{\partial#1}{\partial#2}}
\newcommand{\pdn}[3]{\frac{\partial^{#3}#1}{\partial#2^{#3}}}
\newcommand{\nor}[1]{\left\| #1 \right\|} 
\newcommand{\LRp}[1]{\left( #1 \right)} 
\newcommand{\LRs}[1]{\left[ #1 \right]} 
\newcommand{\LRb}[1]{\left| #1 \right|} 
\newcommand{\LRc}[1]{\left\{ #1 \right\}} 
\newcommand{\LRl}[1]{\left. #1 \right|} 
\newcommand{\jump}[1] {\ensuremath{\left\llbracket#1\right\rrbracket}} 
\newcommand{\avg}[1] {\ensuremath{\LRc{\!\!\LRc{#1}\!\!}}}
\newcommand{\prodmean}[1] {\ensuremath{\LRp{\!\LRp{#1}\!}}}
\newcommand{\fnt}[1]{\bm{\mathsf{ #1}}}

\newtheorem{remark}{Remark}

\date{}

\begin{document}


\begin{frontmatter}
\title{Nodal discontinuous Galerkin methods for non-ideal equations of state: pressure equilibrium preservation and entropy correction\tnoteref{tnote1}}

\tnotetext[tnote1]{Distribution Statement A: Approved for Public Release; Distribution is Unlimited. AFRL-2026-3663.}
\author[oden,ASE]{Jesse Chan}
\ead{jesse.chan@oden.utexas.edu}
\author[JGU]{Hendrik Ranocha}
\author[oden]{Raymond Park}
\author[UHH]{Joshua Lampert}
\author[NRL]{Eric Ching}
\author[AFRL]{Ayaboe Edoh}
\address[oden]{Oden Institute for Computational Engineering and Sciences, The University of Texas-Austin}
\address[ASE]{Department of Aerospace Engineering \& Engineering Mechanics, The University of Texas-Austin}
\address[JGU]{Institute of Mathematics, Johannes Gutenberg University Mainz}
\address[UHH]{Department of Mathematics, University of Hamburg}
\address[NRL]{Laboratories for Computational Physics and Fluid Dynamics, U.S. Naval Research Laboratory}
\address[AFRL]{Amentum --- U.S. Air Force Research Laboratory (Edwards)}

\begin{abstract}
Structure-preserving discontinuous Galerkin (DG) methods typically improve the robustness of high order simulations of real fluids. In addition to conservation, key structures include the preservation of pressure equilibrium and satisfaction of at least one entropy inequality. In this work, we investigate conservative discretizations using exactly pressure equilibrium conserving (EPEC) and approximately pressure equilibrium conserving (APEC) flux differencing DG formulations, as well as entropy stable formulations through the use of minimally dissipative corrections for non-ideal equations of state (EOS). 

We introduce an analysis of EPEC schemes and a new procedure for designing such fluxes based on a generalization of Tadmor's shuffle condition. We also analyze APEC DG schemes and show that the incorporation of dissipative interface penalization terms does not significantly increase pressure equilibrium errors, especially at higher orders of approximation. Finally, we observe that when combined with APEC flux differencing formulations, entropy correction improves robustness for under-resolved solutions and long-time simulations. 
 

\end{abstract}
\end{frontmatter}


\section{Introduction}

High order methods are of interest for unsteady fluid calculations due to their low dissipation and dispersion errors, but suffer from instabilities in the presence of shock waves and other under-resolved solution features. This lack of robustness can be treated using stabilization and shock capturing methods; however, such methods typically introduce heuristic parameters which must be tuned to balance robustness and accuracy. Moreover, optimal values of such parameters may not be generalizable across different problem settings. 

In this work, we focus on two mechanisms for improving robustness: pressure equilibrium conservation and entropy stability. For multi-component flows and non-ideal equations of state (EOS), spurious pressure oscillations can arise due to the ratio of specific heats varying spatially \cite{abgrall2001computations}. These spurious pressure artifacts can grow indefinitely and crash a simulation \cite{fujiwara2023fully}, and are not effectively suppressed by common artificial viscosity or shock capturing methods. While these pressure oscillations have historically been addressed using non-conservative formulations \cite{abgrall1996prevent, liu1998quasi, boyd2021diffuse}, recent work has developed conservative schemes that possess exact pressure equilibrium conserving (EPEC) or approximate pressure equilibrium conserving (APEC) properties \cite{coppola2026pep, terashima2025approximately, degrendele2025construction, ching2025conservative}. This work will introduce extensions of such techniques to nodal DG formulations.

Entropy stable discontinuous Galerkin (DG) methods also aim to address losses of robustness in discretizations of time-dependent nonlinear conservation laws, focusing instead on enforcing an entropy inequality. For example, entropy stable DG methods \cite{chen2020review, gassner2021novel} enforce a semi-discrete cell entropy inequality by constructing an entropy conservative algebraic ``flux differencing'' formulation using summation-by-parts operators and entropy conservative two-point finite volume fluxes \cite{tadmor1987numerical,tadmor2003entropy}. Adding entropy dissipation through interface fluxes then results in an entropy stable scheme. 

While entropy conservative fluxes have been well-studied for multi-component and thermally perfect gases \cite{hansen2019entropy, gouasmi2020formulation, peyvan2023high, ching2024positivity, ching2025positivity, oblapenko2025entropy}, they have only recently been extended to non-ideal equations of state (EOS) \cite{aiello2025entropy, klein2026generalized}. A real-fluid EOS is necessary to accurately model a variety of engineering applications, including propulsion systems, carbon sequestration, explosives, and emerging power generation technologies. In these environments, the fluid experiences high pressures and temperatures, often reaching supercritical/transcritical conditions at which the ideal-gas assumption breaks down. In this work, we provide an alternative means of enforcing a semi-discrete entropy inequality for a single-component non-ideal EOS fluid using entropy correction, including a variant of \cite{chan2025artificial} based on a flux-corrected transport-like scheme. Finally, we note that entropy correction approaches provide additional flexibility and portability, as they do not require the derivation of entropy conservative two-point fluxes in order to enforce a cell entropy inequality. 

Entropy corrections based on ``companion'' schemes have been introduced based on entropy conservative flux differencing formulations \cite{carpenter2014entropy, doehring2026volume}. An alternative approach is to utilize an entropy dissipative correction \cite{khalfallah1990conditions, richter1995entropy, abgrall2018general, berthon2020easy, abgrall2022reinterpretation} such as entropy correction artificial viscosity \cite{chan2025artificial, christner2025entropy}. Rather than first constructing an entropy conservative method and adding entropy dissipation, these approaches add a dissipative correction designed to enforce a cell entropy inequality. The key observation for entropy correction artificial viscosity is that the magnitude of artificial viscosity necessary to enforce a cell entropy inequality is extremely small at high orders, resulting in a minimally dissipative high order method suitable for scale-resolving and under-resolved flows.


The paper proceeds as follows. We present the governing equations and entropy stability theory in Section~\ref{sec:1}. Section~\ref{sec:ndg} reviews flux differencing nodal DG formulations, through which pressure equilibrium errors can be treated using exactly pressure equilibrium conserving (EPEC) or approximately pressure equilibrium conserving (APEC) finite volume fluxes \cite{terashima2025approximately, degrendele2025construction} discussed in Section~\ref{sec:pec}. We also adapt conditions from \cite{artiano2026affordable} to characterize pressure equilibrium conservation and derive a new EPEC flux. Section~\ref{sec:ec} discusses the enforcement of a cell entropy inequality using a version of entropy correction based on flux-corrected transport. Finally, Section~\ref{sec:4} presents representative numerical experiments for the van der Waals and Peng-Robinson EOS.

\section{Governing equations}
\label{sec:1}

We are interested in the single-component compressible Euler equations
\begin{equation}
\pd{\bm{u}}{t} + \sum_{i=1}^d \pd{\bm{f}_i(\bm{u})}{x_i}  = \bm{0},
\label{eq:ncl}
\end{equation}
where $\bm{u}$ denotes the conservative variables and $\bm{f}_i(\bm{u})$ denote the convective fluxes along the $i$th coordinate direction.
This work focuses on $d=1, 2$; for $d=2$, $\bm{u} = [\rho, \rho v_1, \rho v_2, \rho E]$ and the inviscid fluxes $\bm{f}_i$ are given by
\[
\bm{f}_1(\bm{u}) = \begin{bmatrix}
\rho v_1\\
\rho v_1^2 + p\\
\rho v_1 v_2 \\
v_1 (\rho E + p)
\end{bmatrix},
\qquad
\bm{f}_2(\bm{u}) = \begin{bmatrix}
\rho v_2\\
\rho v_1 v_2\\
\rho v_2^2 + p\\
v_2 (\rho E + p)
\end{bmatrix}.
\]
The one-dimensional compressible Euler system is obtained by setting $v_2 = 0$ and dropping the $y$-momentum equation. 

The compressible Euler system is  closed by specifying an equation of state (EOS) for the pressure. We will assume a general pressure EOS
\[
p \coloneqq p(V, T)
\]
where $V = \rho^{-1}$ denotes the specific volume and $T$ denotes the temperature. The temperature is given by a temperature-dependent internal energy model. The formulation in this work will consider more general internal energy and temperature relations $e \coloneqq e(V, T)$, where $e =E - \frac{1}{2} (v_1^2 + v_2^2)$. In the case of an ideal gas, this yields the linear relation $e = c_v T$. 

\subsection{Entropy inequality}

Vanishing viscosity solutions to \eqref{eq:ncl} can be shown \cite{dafermos2005hyperbolic, godlewski2013numerical, chen2017entropy} to satisfy a mathematical entropy inequality
\begin{equation}
\pd{S(\bm{u})}{t} + \sum_{m=1}^d \pd{F_m(\bm{u})}{x_m} \leq 0, \qquad F_m(\bm{u}) = \bm{v}(\bm{u})^T\bm{f}_m(\bm{u}) - \psi_m(\bm{u}),
\label{eq:entropy_ineq}
\end{equation}
where $S(\bm{u})$ is a convex scalar entropy function, $F_m(\bm{u})$ are the associated entropy fluxes, $\psi_m$ are the entropy potentials, and $\bm{v}(\bm{u})$ are the entropy variables
\[
\bm{v}(\bm{u}) = \pd{S}{\bm{u}}.
\]
The proof of \eqref{eq:entropy_ineq} relies on the relation $\bm{v}(\bm{u})^T \pd{\bm{f}}{\bm{u}} = \pd{\bm{F}}{\bm{u}}$, which implies through the chain rule that
\begin{equation}
\label{eq:entropychainrule}
\bm{v}(\bm{u})^T \pd{\bm{f}(\bm{u})}{\bm{x}} = \pd{\bm{F}(\bm{u})}{\bm{x}}.
\end{equation}
We assume that $S(\bm{u})$ is strictly convex, which guarantees that the mapping between conservative and entropy variables is invertible \cite{lukavcova2025note}. In this work, we utilize an integrated cell version of \eqref{eq:entropy_ineq} \cite{jiang1994cell}. Consider a closed domain $D \subset \mathbb{R}^d$ with boundary $\partial D$. Then, integrating \eqref{eq:entropy_ineq} over $D$ and applying the divergence theorem yields
\begin{equation}
\int_{D}\pd{S(\bm{u})}{t} + \int_{\partial D} \sum_{m=1}^d\LRp{\bm{v}^T\bm{f}_m(\bm{u}) - \psi_m(\bm{u})} n_m\leq 0.
\label{eq:cell_entropy_ineq}
\end{equation}

For a single fluid with a general equation of state, the compressible Euler equations are symmetrized under the entropy flux pair 
\[
S(\bm{u}) = -\rho s, \qquad F_i(\bm{u}) = -\rho s v_i,
\]
where $s$ is the specific entropy. 
The proof can be found, for example, in \cite{lukavcova2025note}, or in Section 2.1 of \cite{godlewski2013numerical}. Moreover, one can show using the thermodynamic relation $T ds = de + pdV$ that the entropy variables are 
\[
\bm{v} \coloneqq \pd{S}{\bm{u}} = T^{-1} \LRs{g - \frac{1}{2}(v_1^2 + v_2^2), v_1, v_2, -1},
\]
where $g = e + pV - Ts$ is the Gibbs free energy. We also introduce the entropy potential $\psi_i(\bm{u})$, which for general EOS can be computed via the relation \cite{klein2026generalized}
\begin{equation}
\psi_i(\bm{u}) = \bm{v}(\bm{u})^T\bm{f}_i(\bm{u}) - F_i(\bm{u}) = \frac{p v_i}{T}.
\label{eq:psi}
\end{equation}

\subsection{Pressure equilibrium}
The compressible Euler system gives rise to transport equations for velocity $v_k$,
\begin{equation}
\frac{\partial v_{k}}{\partial t}+\sum_{i=1}^{d}v_{i}\frac{\partial v_{k}}{\partial x_{i}}+\frac{1}{\rho}\frac{\partial p}{\partial x_{k}}=0,\;k=1,\ldots,d,
\end{equation}
and pressure $p$~\cite{Fed02},
\begin{equation}
\frac{\partial p}{\partial t}+\sum_{i=1}^{d}\frac{\partial\left(pv_{i}\right)}{\partial x_{i}}+\left(\rho c^{2}-p\right)\sum_{i=1}^{d}\frac{\partial v_{i}}{\partial x_{i}}=0,
\end{equation}
where $c$ is the speed of sound. In the case of uniform pressure and velocity, we have
\[
\frac{\partial v_{k}}{\partial t}=0,\qquad 
\frac{\partial p}{\partial t}=0,
\]
i.e., pressure and velocity remain constant in time. This property, known as pressure equilibrium preservation/conservation, is what we aim to achieve either exactly or approximately in the discrete setting. Failure to preserve pressure equilibrium in numerical simulations can lead to spurious pressure oscillations that pollute the solution and may cause divergence~\cite{fujiwara2023fully}. Note that these equations imply that velocity equilibrium preservation is a prerequisite for pressure equilibrium preservation.

\section{Flux differencing nodal discontinuous Galerkin formulation}
\label{sec:ndg}

For this work, we will restrict ourselves to nodal (collocation) DG methods \cite{kopriva2010quadrature}, as well as DG-like methods based on multi-dimensional summation by parts (SBP) operators \cite{hicken2016multidimensional, chen2017entropy, fernandez2019staggered}. Both discretizations expose a structure which is amenable to conservative ``flux differencing'' formulations, which we will use to address spurious pressure oscillations present in multi-component flows and non-ideal thermodynamics.

\subsection{A conservative flux differencing nodal DG formulation}

First, we introduce a flux differencing nodal DG formulation. For simplicity, we first present the formulation in 1D and refer the reader to relevant references for the multi-dimensional extension. Let $\hat{\bm{x}}, \bm{x}$ denote coordinates on the reference element $\hat{D} = [-1,1]$ and a physical element $D^k$, respectively. We assume that the solution is approximated by a degree $N$ nodal (Lagrange) basis on the reference element 
\[
\bm{u}_h(x(\hat{x}), t) = \sum_{j=1}^{N+1} \fnt{u}_j(t) \ell_j(\hat{x}).
\]
In one dimension, the Lagrange basis functions are defined on $(N+1)$-point Legendre-Gauss-Lobatto (LGL) quadrature nodes $\hat{x}_i$ with associated quadrature weights $w_i > 0$.

The flux differencing formulation on each element $D^k$ can be written in terms of a summation-by-parts (SBP) differentiation matrix $\hat{\fnt{Q}}$, a boundary interpolation matrix $\fnt{E}$, and a diagonal reference mass matrix $\hat{\fnt{M}}$ on the reference element $\hat{D}$ \cite{gassner2013skew}. These matrices are defined as follows:
\[
\hat{\fnt{M}}_{ij} = w_i \delta_{ij},
\qquad
\hat{\fnt{Q}}_{ij} = \int_{-1}^1 \ell_i \pd{\ell_j}{\hat{x}},
\qquad
\fnt{E} = \begin{bmatrix}
1 & 0 & \ldots & 0\\
0 & \ldots & 0 & 1
\end{bmatrix}.
\]
Note that the integral in $\hat{\fnt{Q}}_{ij}$ is exact when calculated using LGL quadrature.
These operators also satisfy the following summation by parts property
\[
\hat{\fnt{Q}}+\hat{\fnt{Q}}^T = \fnt{E}^T\fnt{B}\fnt{E}, \qquad \fnt{B} = \begin{bmatrix}-1 & 0\\ 0 & 1\end{bmatrix}, \qquad \hat{\fnt{Q}}\fnt{1} = \fnt{0}.
\]
Let $\bm{f}^*_n(\bm{u}_L, \bm{u}_R) \approx \sum_{i=1}^d \bm{f}_i(\bm{u}) \bm{n}_i$ denote an interface numerical flux along the normal direction (in 1D, the normal vector reduces to $\bm{n} = \pm 1$), and let $\bm{f}_{\rm vol}(\bm{u}_L, \bm{u}_R, \bm{n}_{LR})$ denote a volume flux which is consistent and skew-symmetric in the sense that
\[
\bm{f}_{\rm vol}(\bm{u}, \bm{u}, \bm{n}) = \sum_{i=1}^d\bm{f}_i(\bm{u}) \bm{n}_i,
\qquad
\bm{f}_{\rm vol}(\bm{u}_L, \bm{u}_R, \bm{n}_{LR}) = -\bm{f}_{\rm vol}(\bm{u}_R, \bm{u}_L, -\bm{n}_{LR}).
\]
The flux differencing formulation over each element $D^k$ is then given by the following:
\begin{gather}
\label{eq:dgform}
J\hat{\fnt{M}}_{ii}\td{\fnt{u}_i}{t} + \sum_{j=1}^{N+1} \nor{\fnt{n}_{ij}}\bm{f}_{\rm vol}\LRp{\fnt{u}_i, \fnt{u}_j, \frac{\fnt{n}_{ij}}{\nor{\fnt{n}_{ij}}}} + \sum_{j=1}^2\fnt{E}_{ji}\bm{f}^*_n(\fnt{u}_{f, j}, \fnt{u}_{f, j}^+)  = 0\\
\fnt{n}_{ij} = \begin{cases}
(\hat{\fnt{Q}}_{ij} - \hat{\fnt{Q}}_{ji}) & i \neq j \\
0 & \text{otherwise}
\end{cases}
\qquad \fnt{u}_f = \fnt{E}\fnt{u}, \nonumber
\end{gather}
where $\fnt{u}_f$ denotes the solution values at face nodes, $\fnt{u}_f^+$ denotes the exterior value of the solution (either on a neighboring element or defined through the imposition of boundary conditions), $J$ is the Jacobian of the mapping from the reference to physical element, and $\fnt{n}_{ij}$ is an algebraic term intended to behave like a normal vector in finite volume formulations. Note that the formulation is conservative; summing \eqref{eq:dgform} over all nodes in an element yields that the scaled solution average is balanced by interface flux contributions, with the volume term vanishing due to the skew-symmetry of $\fnt{n}_{ij}$ and symmetry of the volume flux. Moreover, one can show that if $\bm{f}_{\rm vol}$ is of the form
\[
\bm{f}_{\rm vol}(\bm{u}_i, \bm{u}_j, \bm{n}) = \sum_{i=1}^d\bm{f}_{i, {\rm vol}}(\bm{u}_i, \bm{u}_j) \bm{n}_i
\]
where $\bm{f}_{i, {\rm vol}}(\bm{u}_i, \bm{u}_j)$ is symmetric and consistent, then \eqref{eq:dgform} is conservative and high order accurate as well \cite{chen2017entropy, crean2018entropy}. Note that $\bm{f}_{i, {\rm vol}}(\bm{u}_i, \bm{u}_j)$ does not contain the normal $\bm{n}$ as an argument, and is used to refer to a non-dissipative two-point flux. 

The extension of \eqref{eq:dgform} to multiple dimensions is straightforward through either a tensor product construction or through multi-dimensional SBP operators. We refer the reader to \cite{chen2017entropy, crean2018entropy, wu2021positivity, lin2023high} for more details on multi-dimensional flux differencing formulations.

\section{Conservative pressure-equilibrium-conserving fluxes}
\label{sec:pec}


The flux differencing formulation \eqref{eq:dgform} can be combined with entropy conservative volume fluxes to construct entropy conservative or entropy stable schemes. However, they can also be combined with pressure equilibrium conserving fluxes \cite{ranocha2020entropy, ranocha2021preventing}. In this section, we review conditions under which a two-point flux is exactly pressure-equilibrium conserving (EPEC) and derive a new EPEC flux based on these conditions. 

A condition for exact pressure equilibrium conservation is given in \cite{fujiwara2023fully, terashima2025approximately} for the multi-species case. For a single fluid, this condition reduces to
\begin{equation}
    \LRl{\pd{\rho e}{\rho}}_{p, \bm{u}_i}
    \LRp{ \rho_{i + 1/2} - \rho_{i - 1/2} }
    =
    \rho e_{i + 1/2} - \rho e_{i - 1/2},
\label{eq:pe_condition}
\end{equation}
where $\rho_{i \pm 1/2}$ and $\rho e_{i \pm 1/2}$ are consistent approximations of the density and the internal energy density at the $i \pm 1/2$ interface, respectively.
Adapted to the single-species case and our notation, Terashima et al.~\cite{terashima2025approximately} write
\begin{quote}
    However, as recognized in Eq.~\eqref{eq:pe_condition}, the
    asymmetry associated with the partial derivative term
    $\LRl{\pd{\rho e}{\rho}}_{p, \bm{u}_i}$
    defined at $i$ means that no locally-defined half-point values
    satisfy Eq.~\eqref{eq:pe_condition} for all cells.
\end{quote}
This conclusion does not hold in general. 
Indeed, the condition \eqref{eq:pe_condition} is similar to conditions for entropy conservative numerical fluxes developed by Tadmor \cite{tadmor1987numerical, tadmor2003entropy}; the partial derivative $\LRl{\pd{\rho e}{\rho}}_{p, \bm{u}_i}$ takes the role of the entropy variables, the density flux $\rho_{i \pm 1/2}$ takes the role of an entropy-conservative flux, and the internal energy density flux $\rho e_{i \pm 1/2}$ takes the role of the entropy flux.

Define the average and jump as $\avg{a} = \frac{1}{2}(a_L + a_R)$ and $\jump{a} = a_L - a_R$. We first state a generalization of Tadmor's entropy analysis from \cite{artiano2026affordable}. 
\begin{lemma}
\label{lemma:EPEC}
    Let $\bm{v}_i = \bm{v}(\bm{u}_i)$ and $\bm{f}_{i \pm 1/2}$ be consistent and symmetric two-point approximations of $\bm{f}$, i.e., $\bm{f}_{i + 1/2} = \bm{f}_{\rm avg}(\bm{u}_{i}, \bm{u}_{i+1})$ satisfying $\bm{f}_{\rm avg}(\bm{u}, \bm{u}) = \bm{f}(\bm{u})$ and $\bm{f}_{\rm avg}(\bm{u}_L, \bm{u}_R) = \bm{f}_{\rm avg}(\bm{u}_R, \bm{u}_L)$. Additionally, define $\bm{F}_{\rm avg}$
    \begin{equation}
        \bm{F}_{\rm avg} = \avg{\bm{F}} + \avg{\bm{v}} \cdot \bm{f}_{\rm avg} - \avg{\bm{v} \cdot \bm{f}}.
        \label{eq:generalized_tadmor_entropy_flux}
    \end{equation}
    Then, 
    \begin{equation}
        \bm{v}_i \cdot \LRp{\bm{f}_{i + 1/2} - \bm{f}_{i - 1/2}}
        = \LRp{\bm{F}_{i + 1/2} - \bm{F}_{i - 1/2}}
        \label{eq:generalized_shuffle_condition}
    \end{equation}
    is satisfied if and only if
    \begin{equation}
        \jump{\bm{v}} \cdot \bm{f}_{\rm avg} = \jump{\bm{v} \cdot \bm{f} - \bm{F}}.
        \label{eq:generalized_tadmor_condition}
    \end{equation}
\end{lemma}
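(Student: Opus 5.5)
The plan is a direct algebraic computation. I would reduce the shuffle condition \eqref{eq:generalized_shuffle_condition} at node $i$ to a sum of two local residuals, one per interface. Each residual is exactly the defect in \eqref{eq:generalized_tadmor_condition} for the corresponding pair of states. The ``only if'' direction I read as holding for arbitrary states $\bm{u}_{i-1},\bm{u}_i,\bm{u}_{i+1}$, since no single fixed triple of states could force the condition.

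\emph{Shorthand.} Write $q = \bm{v}\cdot\bm{f} - \bm{F}$, so that \eqref{eq:generalized_tadmor_entropy_flux} becomes $\bm{F}_{\rm avg} = \avg{\bm{v}}\cdot\bm{f}_{\rm avg} - \avg{q}$. For a pair $(\bm{u}_L,\bm{u}_R)$ define the residual
\[
r(\bm{u}_L,\bm{u}_R) = \jump{\bm{v}}\cdot\bm{f}_{\rm avg}(\bm{u}_L,\bm{u}_R) - \jump{q},
\]
so \eqref{eq:generalized_tadmor_condition} is the statement $r\equiv 0$. Since $\bm{f}_{\rm avg}$ is symmetric and $\jump{\cdot}$ is antisymmetric, $r$ is antisymmetric in its arguments.

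\emph{Right interface.} Rewrite \eqref{eq:generalized_shuffle_condition} as $A_+ - A_- = 0$, where $A_\pm = \bm{v}_i\cdot\bm{f}_{i\pm1/2} - \bm{F}_{i\pm1/2}$. At $i+1/2$ node $i$ is the left state. Using $\bm{v}_L - \avg{\bm{v}} = \tfrac12\jump{\bm{v}}$ and $\avg{q} = q_L - \tfrac12\jump{q}$ gives
\[
A_+ = \tfrac12\jump{\bm{v}}\cdot\bm{f}_{\rm avg} + \avg{q} = q_i + \tfrac12 r(\bm{u}_i,\bm{u}_{i+1}).
\]

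\emph{Left interface.} At $i-1/2$ node $i$ is the right state. Using $\bm{v}_R - \avg{\bm{v}} = -\tfrac12\jump{\bm{v}}$ and $\avg{q} = q_R + \tfrac12\jump{q}$ gives
\[
A_- = q_i - \tfrac12 r(\bm{u}_{i-1},\bm{u}_i).
\]

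\emph{Combining.} Subtracting the two expressions, the $q_i$ terms cancel:
\[
\bm{v}_i\cdot\LRp{\bm{f}_{i+1/2}-\bm{f}_{i-1/2}} - \LRp{\bm{F}_{i+1/2}-\bm{F}_{i-1/2}} = \tfrac12\LRp{r(\bm{u}_i,\bm{u}_{i+1}) + r(\bm{u}_{i-1},\bm{u}_i)}.
\]

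\emph{Sufficiency.} If \eqref{eq:generalized_tadmor_condition} holds for all pairs, both residuals vanish, and \eqref{eq:generalized_shuffle_condition} follows.

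\emph{Necessity.} Suppose \eqref{eq:generalized_shuffle_condition} holds for all states. Choose $\bm{u}_{i-1} = \bm{u}_i$. Then $\jump{\bm{v}} = 0$ and $\jump{q} = 0$; consistency of $\bm{f}_{\rm avg}$ ensures the flux term is simply $\bm{f}(\bm{u}_i)$ multiplied by a zero jump. Hence $r(\bm{u}_{i-1},\bm{u}_i) = 0$, and therefore $r(\bm{u}_i,\bm{u}_{i+1}) = 0$. Since $\bm{u}_i$ and $\bm{u}_{i+1}$ are arbitrary, \eqref{eq:generalized_tadmor_condition} holds.

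\emph{Main obstacle.} The calculation itself is routine. The only delicate points are keeping the sign convention $\jump{a} = a_L - a_R$ consistent at the two interfaces, where node $i$ plays opposite roles, and making explicit the quantifier in the converse. I would state the converse as ``for all states'', because for a single fixed triple only the sum of the two residuals is constrained.
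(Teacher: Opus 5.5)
Your proof is correct, and it is organized differently from the paper's. The paper proves necessity in Tadmor's original style. It specializes \eqref{eq:generalized_shuffle_condition} to the two degenerate triples $\bm{u}_i=\bm{u}_{i+1}$ and $\bm{u}_{i-1}=\bm{u}_i$, then combines the two equations so that $\bm{F}_{\rm avg}(\bm{u}_L,\bm{u}_R)$ cancels. It proves sufficiency by a separate expansion of $\bm{v}_i\cdot\LRp{\bm{f}_{i+1/2}-\bm{f}_{i-1/2}}$, splitting $\bm{v}_i$ into average and half-jump at each interface.

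You instead establish once the exact identity that the shuffle defect at node $i$ equals $\frac{1}{2}\LRp{r(\bm{u}_i,\bm{u}_{i+1})+r(\bm{u}_{i-1},\bm{u}_i)}$. Sufficiency is then immediate, and necessity needs only the single choice $\bm{u}_{i-1}=\bm{u}_i$, since $r(\bm{u},\bm{u})=0$ trivially.

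Your identity buys a quantitative statement the paper does not make. For a flux that violates \eqref{eq:generalized_tadmor_condition}, such as the APEC flux with $\rho_{\rm avg}=\avg{\rho}$ and $(\rho e)_{\rm avg}$ from \eqref{eq:internal_energy_avg}, the nodal defect in \eqref{eq:pe_condition} is exactly the average of the two interface residuals. The identity also uses neither symmetry nor consistency of $\bm{f}_{\rm avg}$.

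The paper's necessity argument buys something in return. It never uses the explicit formula \eqref{eq:generalized_tadmor_entropy_flux}, only consistency of $\bm{f}_{\rm avg}$ and $\bm{F}_{\rm avg}$. So it shows that \eqref{eq:generalized_tadmor_condition} is necessary for \eqref{eq:generalized_shuffle_condition} with \emph{any} consistent two-point $\bm{F}_{\rm avg}$, whereas your identity relies on that specific definition.

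Your reading of the ``only if'' direction as quantified over all states matches how the paper uses it.
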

\begin{proof}
    This proof follows essentially the same steps as Tadmor's original proof \cite{tadmor1987numerical}, but without the assumption that $\bm{v}$ and $\bm{F}$ are derived from an entropy.
    First, we show that \eqref{eq:generalized_tadmor_condition} is necessary.
    Setting $\bm{u}_{i} = \bm{u}_{i + 1} = \bm{u}_R$ and $\bm{u}_{i - 1} = \bm{u}_L$ in \eqref{eq:generalized_shuffle_condition} yields due to consistency
    \begin{equation*}
        \bm{v}_R \cdot \LRp{\bm{f}(\bm{u}_R) - \bm{f}_{\rm avg}(\bm{u}_L, \bm{u}_R)}
        = \LRp{\bm{F}(\bm{u}_R) - \bm{F}_{\rm avg}(\bm{u}_L, \bm{u}_R)}.
    \end{equation*}
    Similarly, setting $\bm{u}_{i - 1} = \bm{u}_{i} = \bm{u}_L$ and $\bm{u}_{i + 1} = \bm{u}_R$ in \eqref{eq:generalized_shuffle_condition} yields
    \begin{equation*}
        \bm{v}_L \cdot \LRp{\bm{f}_{\rm avg}(\bm{u}_L, \bm{u}_R) - \bm{f}(\bm{u}_L)}
        = \LRp{\bm{F}_{\rm avg}(\bm{u}_L, \bm{u}_R) - \bm{F}(\bm{u}_L)}.
    \end{equation*}
    Multiplying both sides by $-1$ and adding the two equations above yields \eqref{eq:generalized_tadmor_condition}
    \begin{equation*}
        \LRp{\bm{v}_L - \bm{v}_R} \cdot \bm{f}_{\rm avg}(\bm{u}_L, \bm{u}_R)
        =
        \LRp{\bm{v}_L \cdot \bm{f}(\bm{u}_L) - \bm{F}(\bm{u}_L)}
        - \LRp{\bm{v}_R \cdot \bm{f}(\bm{u}_R) - \bm{F}(\bm{u}_R)}.
    \end{equation*}
    Next, we show that \eqref{eq:generalized_tadmor_condition} is sufficient. Expanding $\bm{v}_i \cdot \LRp{\bm{f}_{i + 1/2} - \bm{f}_{i - 1/2}}$ and using the definition of $\bm{F}_{i \pm 1/2}$ in \eqref{eq:generalized_tadmor_entropy_flux} yields
    \begin{equation*}
    \begin{aligned}
        &\quad
        \bm{v}_i \cdot \LRp{\bm{f}_{i + 1/2} - \bm{f}_{i - 1/2}}
        \\
        &=
        \LRp{\frac{\bm{v}_{i} + \bm{v}_{i + 1}}{2} - \frac{\bm{v}_{i + 1} - \bm{v}_{i}}{2}} \cdot \bm{f}_{i + 1/2}
        -
        \LRp{\frac{\bm{v}_{i - 1} + \bm{v}_{i}}{2} + \frac{\bm{v}_{i} - \bm{v}_{i - 1}}{2}} \cdot \bm{f}_{i - 1/2}
        \\
        &=
        \frac{\bm{v}_{i} + \bm{v}_{i + 1}}{2} \cdot \bm{f}_{i + 1/2}
        - \frac{\bm{v}_{i + 1} \cdot \bm{f}_{i + 1} - \bm{v}_{i} \cdot \bm{f}_{i}}{2}
        + \frac{\bm{F}_{i + 1} - \bm{F}_{i}}{2}
        \\
        &\quad
        - \frac{\bm{v}_{i - 1} + \bm{v}_{i}}{2} \cdot \bm{f}_{i - 1/2}
        - \frac{\bm{v}_{i} \cdot \bm{f}_{i} - \bm{v}_{i - 1} \cdot \bm{f}_{i - 1}}{2}
        + \frac{\bm{F}_{i} - \bm{F}_{i - 1}}{2}
        \\
        &=
        \frac{\bm{v}_{i} + \bm{v}_{i + 1}}{2} \cdot \bm{f}_{i + 1/2}
        - \frac{\bm{v}_{i + 1} \cdot \bm{f}_{i + 1} + \bm{v}_{i} \cdot \bm{f}_{i}}{2}
        + \frac{\bm{F}_{i + 1} + \bm{F}_{i}}{2}
        \\
        &\quad
        - \frac{\bm{v}_{i - 1} + \bm{v}_{i}}{2} \cdot \bm{f}_{i - 1/2}
        + \frac{\bm{v}_{i} \cdot \bm{f}_{i} + \bm{v}_{i - 1} \cdot \bm{f}_{i - 1}}{2}
        - \frac{\bm{F}_{i} + \bm{F}_{i - 1}}{2}
        \\
        &=
        \bm{F}_{i + 1/2} - \bm{F}_{i - 1/2}.
    \end{aligned}
    \end{equation*}
\end{proof}
Thus, \eqref{eq:pe_condition} is satisfied if and only if
\begin{align}
    \jump{\LRl{\pd{\rho e}{\rho}}_{p}} \rho_{\rm avg}
    &=
    \jump{\LRl{\pd{\rho e}{\rho}}_{p} \rho} - \jump{\rho e}, \label{eq:equiv_pe_condition}
    \\
    (\rho e)_{\rm avg}
    &=
    \avg{\rho e}
    + \avg{\LRl{\pd{\rho e}{\rho}}_{p}} \rho_{\rm avg}
    - \avg{\LRl{\pd{\rho e}{\rho}}_{p} \rho}. \label{eq:internal_energy_avg}
\end{align}
Assuming $f_{\rho} = \rho_{\rm avg} \avg{v_1}$ and a kinetic energy preserving (KEP) structure \cite{kuya2018kinetic, ranocha2021preventing, coppola2026pep}, the full expression for the EPEC two-point flux is given by 
\begin{equation}
    \bm{f}_{\rm EPEC}(\bm{u}_L,\bm{u}_R)
    =
    \begin{bmatrix}
        f_{\rho} \\
        f_{\rho}\,\avg{v_1} + \avg{p} \\
        (\rho e)_{\rm avg}\,\avg{v_1}
        + f_{\rho}\,\frac{1}{2}\prodmean{v_1^2}
        + \prodmean{p, v_1}
    \end{bmatrix},
    \label{eq:ranocha_EPEC}
\end{equation}
where the product means of $v_1^2$ and $pv_1$ are given by
\[
\prodmean{v_1^2} = v_{1,L} v_{1,R},
\qquad
\prodmean{p, v_1} = \frac{p_L v_{1,R} + p_R v_{1,L}}{2}.
\]

\begin{remark}
\label{remark:PEC_condition}
    For an ideal gas EOS, $\LRl{\pd{\rho e}{\rho}}_{p} = 0$. Thus, the equivalent conditions \eqref{eq:pe_condition} and \eqref{eq:equiv_pe_condition}, \eqref{eq:internal_energy_avg} are satisfied trivially. In particular, the simpler analysis of \cite{ranocha2021preventing} applies in this case. Additionally, for a stiffened gas, $\LRl{\pd{\rho e}{\rho}}_{p} = q$ where $q$ is a constant, and $\jump{\rho e} = q\jump{\rho}$. Thus, \eqref{eq:pe_condition} and \eqref{eq:equiv_pe_condition} are also trivially satisfied for a stiffened gas.
\end{remark}

\subsection{A new EPEC flux based on density average correction}

Conditions \eqref{eq:equiv_pe_condition} suggest a way to construct an exactly PEC flux by correcting the density average. Expanding the jump leads to the condition
\begin{equation*}
    \jump{\LRl{\pd{\rho e}{\rho}}_{p}} \rho_{\rm avg}
    =
    \jump{\LRl{\pd{\rho e}{\rho}}_{p}} \avg{\rho}
    + \avg{\LRl{\pd{\rho e}{\rho}}_{p}} \jump{\rho}
    - \jump{\rho e}.
\end{equation*}
Solving for $\rho_{\rm avg}$ above yields
\begin{equation}
    \rho_{\rm avg}
    =
    \avg{\rho}
    - \frac{
        \jump{\rho e}
        - \avg{\LRl{\pd{\rho e}{\rho}}_{p}} \jump{\rho}
    }{
        \jump{\LRl{\pd{\rho e}{\rho}}_{p}}
    }.
    \label{eq:EPEC_no_consistency_term}
\end{equation}
This recovers the exact PEC correction introduced by \cite{degrendele2025construction} for the single-fluid case. Note also that the APEC flux derived in \cite{terashima2025approximately} uses the same $(\rho e)_{\rm avg}$ approximation, but uses only the arithmetic average for $\rho_{\rm avg}$. 

We note that several modifications are possible without compromising PEC. For example, since this expression needs to hold only for constant $p$, adding $\jump{p}$ to the numerator retains the PEC property, and the following flux is also PEC:
\begin{equation}
    \rho_{\rm avg}
    =
    \avg{\rho}
    - \frac{
        \jump{\rho e}
        - \avg{\LRl{\pd{\rho e}{\rho}}_{p}} \jump{\rho}
        - \avg{\LRl{\pd{\rho e}{p}}_{\rho}} \jump{p}
    }{
        \jump{\LRl{\pd{\rho e}{\rho}}_{p}}
    }.
    \label{eq:correction_EPEC}
\end{equation}
This additional term is included to more closely mimic the mean value theorem $\jump{\rho e} = \pd{\rho e}{\rho}_{\rho^*} \jump{\rho} + \pd{\rho e}{p}_{p^*}\jump{p}$, where $\rho^*, p^*$ are intermediate states. 

We note that we do not need to add a correction term to the arithmetic density average if the denominator vanishes (since we are only interested in the condition when multiplied by the denominator). Moreover, if pressure and density are constant, both the numerator and denominator vanish. Thus, the resulting flux can be consistently computed near states which satisfy pressure equilibrium. 

In this work, we treat the singularity which occurs in the denominator of $\rho_{\rm avg}$ using the simple switch suggested in \cite{coppola2026pep}
\begin{equation}
\rho_{\rm avg} = 
\begin{cases}
\avg{\rho} & \LRb{\jump{\LRl{\pd{\rho e}{\rho}}_{p}}} < tol\\
\avg{\rho}
    - \frac{
        \jump{\rho e}
        - \avg{\LRl{\pd{\rho e}{\rho}}_{p}} \jump{\rho}
        - \avg{\LRl{\pd{\rho e}{p}}_{\rho}} \jump{p}
    }{
        \jump{\LRl{\pd{\rho e}{\rho}}_{p}}
    } & \text{otherwise}
\end{cases}.
\label{eq:switch_regularization}
\end{equation}
In this work, $tol$ is arbitrarily taken to be $100\epsilon_{\rm mach}$. 

\begin{remark}
The correction term can also be computed using the following regularized density average
\begin{equation}
\begin{gathered}
    \rho_{\rm avg}
    =
    \avg{\rho} - \frac{a b}{b^2 + \epsilon},
    \\
    a = \jump{\rho e}
        - \avg{\LRl{\pd{\rho e}{\rho}}_{p}} \jump{\rho}
        - \avg{\LRl{\pd{\rho e}{p}}_{\rho}} \jump{p}, \qquad
    b = \jump{\LRl{\pd{\rho e}{\rho}}_{p}},
    \label{eq:correction_EPEC_regularized}
\end{gathered}
\end{equation}
and $\epsilon > 0$ in the order of machine accuracy. Up to regularization, the combination of the corrected density average together with the corresponding internal energy density average \eqref{eq:internal_energy_avg} ensures that pressure equilibria are preserved exactly. We observe similar behavior for both the sharp switch and this regularized ratio in numerical experiments. 
\end{remark}

\begin{remark}
The quantity $\LRl{\pd{\rho e}{\rho}}_{p}$ appears in formulas for both EPEC and APEC fluxes, and can be calculated purely in terms of thermodynamic quantities and pressure derivatives via
\begin{gather*}
\LRl{\pd{\rho e}{\rho}}_{p} = \rho c_v \LRl{\pd{p}{T}}_V^{-1}  \LRl{\pd{p}{V}}_T \frac{V}{\rho} + \LRl{\pd{\rho e}{\rho}}_T\\
\LRl{\pd{\rho e }{\rho}}_T = e - \rho \LRl{\pd{e}{V}}_T \frac{V}{\rho}, \qquad \LRl{\pd{e}{V}}_T = T \LRl{\pd{p}{T}}_V - p(V,T),
\end{gather*}
where $c_v = \LRl{\pd{e}{T}}_V$ is the specific heat at constant volume.
\end{remark}

\subsection{Connection to the EPEC flux of Coppola, De Michele, and Aiello \cite{coppola2026pep}}
\label{sec:EPEC_coppola}

Remarkably, the correction-based EPEC flux \eqref{eq:correction_EPEC} is closely related to an EPEC two-point flux recently introduced by
Coppola, De Michele, and Aiello \cite{coppola2026pep} using a similar generalization of Tadmor's entropy-variable approach. The derivation is based on defining 
\[
\chi \coloneqq \rho^2 \LRl{\pd{e}{\rho}}_{p} = \rho \LRl{\pd{\rho e}{\rho}}_{p} - \rho e
\]
where the latter identity follows from the product rule 
\begin{equation}
\LRl{\pd{\rho e}{\rho}}_{p} = \rho\LRl{\pd{e}{\rho}}_{p} + e. 
\label{eq:product_rule_coppola}
\end{equation}
Recall from Lemma~\ref{lemma:EPEC} that an exactly pressure-equilibrium-conserving (EPEC) two-point flux must satisfy \eqref{eq:equiv_pe_condition}
\begin{equation}
    \jump{\LRl{\pd{\rho e}{\rho}}_{p}} \rho_{\rm avg}
    =
    \jump{\LRl{\pd{\rho e}{\rho}}_{p} \rho} - \jump{\rho e}   =  \jump{\chi}.
    \label{eq:chi_pep_identity_target}
\end{equation}
Thus \eqref{eq:chi_pep_identity_target} is satisfied if $\rho_{\rm avg}$ is given by
\begin{equation}
    \rho_{\rm avg} = \frac{\jump{\chi}}{\jump{\LRl{\pd{\rho e}{\rho}}_{p}}}.
    \label{eq:coppola_rho_avg}
\end{equation}
The authors derive the associated internal energy flux as
\begin{align*}
(\rho e)_{\rm avg}\avg{v_1} &= \avg{\LRl{\pd{\rho e}{\rho}}_{p}} f_{\rho} - \avg{v_1} \avg{\rho^2 \LRl{\pd{e}{\rho}}_{p}} \\
&= \underbrace{\LRp{
\avg{\LRl{\pd{\rho e}{\rho}}_{p}} \rho_{\rm avg} - \avg{\rho^2 \LRl{\pd{e}{\rho}}_p} 
}}_{(\rho e)_{\rm avg}}\avg{v_1}.
\end{align*}
Noting that $\rho^2\LRl{\pd{e}{\rho}}_{p} = \rho\LRl{\pd{\rho e}{\rho}}_{p} - \rho e$ recovers the corrected internal energy average of \eqref{eq:ranocha_EPEC}:
\begin{align*}
(\rho e)_{\rm avg} = \avg{\LRl{\pd{\rho e}{\rho}}_{p}} \rho_{\rm avg} - \avg{\rho^2 \LRl{\pd{e}{\rho}}_p} 
&= \avg{\LRl{\pd{\rho e}{\rho}}_{p}} \rho_{\rm avg} - \avg{\rho\LRl{\pd{\rho e}{\rho}}_{p} - \rho e}\\
&= \avg{\rho e} + \avg{\LRl{\pd{\rho e}{\rho}}_{p}} \rho_{\rm avg} - \avg{\rho\LRl{\pd{\rho e}{\rho}}_{p}}.
\end{align*}
The EPEC flux of Coppola, De Michele, and Aiello is then given by
\begin{align}
    \bm{f}_{\rm EPEC}(\bm{u}_L,\bm{u}_R)
    =
    \begin{bmatrix}
        f_{\rho} \\
        f_{\rho}\,\avg{v_1} + \avg{p} \\
        (\rho e)_{\rm avg}\,\avg{v_1}
        + f_{\rho}\,\frac{1}{2}\prodmean{v_1^2}
        + \prodmean{p, v_1}
    \end{bmatrix}, \qquad f_{\rho} = \rho_{\rm avg}\,\avg{v_1}, \label{eq:coppola_EPEC} \\[.5em]
    \rho_{\rm avg} = \frac{\jump{\chi}}{\jump{\LRl{\pd{\rho e}{\rho}}_{p}}}, \qquad (\rho e)_{\rm avg} = \avg{\rho e} + \avg{\LRl{\pd{\rho e}{\rho}}_{p}}\,\rho_{\rm avg}  - \avg{\LRl{\pd{\rho e}{\rho}}_{p}\rho}. 
    \nonumber
\end{align}
Note that this EPEC flux is identical to the correction-based EPEC flux \eqref{eq:ranocha_EPEC} except for the definition of $\rho_{\rm avg}$. We also utilize the same singularity treatment \eqref{eq:switch_regularization}.

By construction, \eqref{eq:coppola_EPEC} satisfies the exact PEC conditions \eqref{eq:equiv_pe_condition}--\eqref{eq:internal_energy_avg} of Lemma~\ref{lemma:EPEC} whenever the denominator $\jump{\LRl{\pd{\rho e}{\rho}}_{p}}\neq 0$. Moreover, one can show that the intermediate density average introduced previously \eqref{eq:EPEC_no_consistency_term} is exactly identical to $\rho_{\rm avg}$ in  \eqref{eq:coppola_rho_avg} and \eqref{eq:coppola_EPEC} using the identity $\avg{\rho}\jump{\LRl{\pd{\rho e}{\rho}}_{p}} + \avg{\LRl{\pd{\rho e}{\rho}}_{p}}\jump{\rho}
    =
    \jump{\LRl{\pd{\rho e}{\rho}}_{p}\rho}$. 
Thus, differences in behavior between the two EPEC fluxes \eqref{eq:coppola_EPEC} and \eqref{eq:correction_EPEC} can be attributed to the inclusion of the additional consistency term $\avg{\LRl{\pd{\rho e}{p}}_{\rho}} \jump{p}$ in \eqref{eq:correction_EPEC}. Numerical experiments in Section~\ref{sec:comparison_EPEC} suggest that including this additional term improves the maximum stable time-step of the resulting simulation. 

%

\subsection{Approximately pressure-equilibrium conserving (APEC) fluxes}
\label{sec:APEC}

Both EPEC fluxes \eqref{eq:ranocha_EPEC} and \eqref{eq:coppola_EPEC} lead to density averages which depend on additional thermodynamic quantities. This may not be desirable \cite{derigs2017novel, ranocha2018comparison, ranocha2021preventing, ranocha2022note}. Decoupling the dependence of $\rho_{\rm avg}$ from other thermodynamic quantities results in a loss of the exact PEC property, but has been observed to improve robustness for highly non-ideal EOS \cite{terashima2025approximately, coppola2026pep}. 

Taking $\rho_{\rm avg} = \avg{\rho}$ reduces both \eqref{eq:ranocha_EPEC} and \eqref{eq:coppola_EPEC} to the approximately pressure-equilibrium conserving (APEC) flux of \cite{terashima2025approximately}: 
\begin{equation}
\label{eq:APEC_KEEP}
\bm{f}_{\rm APEC}(\bm{u}_L, \bm{u}_R) = \begin{bmatrix}
\avg{\rho}\avg{v_1}\\
\avg{\rho}\avg{v_1}^2 + \avg{p}\\
\avg{\rho} \avg{v_1} \frac{1}{2}\prodmean{v_1^2} + \prodmean{p, v_1} + (\rho e)_{\rm avg}\avg{v_1}
\end{bmatrix},
\end{equation}
where for a single-fluid system $(\rho e)_{\rm avg}$ is 
\[
(\rho e)_{\rm avg} = \avg{\rho e} - \frac{1}{4} \LRp{\LRl{\pd{\rho e}{\rho}}_{p_R, \bm{u}_R} - \LRl{\pd{\rho e}{\rho}}_{p_L, \bm{u}_L}} \LRp{\rho_R - \rho_L},
\]
whose equivalence to \eqref{eq:internal_energy_avg} follows directly from the identity
\[
\avg{ab} - \avg{a}\avg{b} = \frac{1}{4}\jump{a}\jump{b}.
\]
\begin{remark}
While Terashima, Ly, and Ihme combine the APEC modification with the KEEP flux \cite{kuya2018kinetic, tamaki2022comprehensive}, they note that this is not strictly necessary, as only the modification of the internal energy average is necessary for the APEC property.
\end{remark}


\section{Enforcing a cell entropy inequality via entropy correction}
\label{sec:ec}

In this work, we utilize the flux differencing formulation \eqref{eq:dgform} to construct a conservative scheme with enhanced pressure-equilibrium-conservation properties; however, the resulting scheme does not satisfy an entropy inequality. To enforce a discrete form of \eqref{eq:cell_entropy_ineq}, we utilize an entropy correction approach. This work considers a flux-corrected transport (FCT) approach to entropy correction, which is a simplification of the knapsack limiting approach introduced in \cite{christner2025entropy, christner2025entropyfd}. The entropy correction adds a minimal entropy dissipative correction term to \eqref{eq:dgform} in order to enforce a semi-discrete cell entropy inequality \eqref{eq:cell_entropy_ineq}. It was shown in \cite{chan2025artificial} that the entropy residuals used to define these correction terms can be expressed as the product of two $L^2$ best approximation errors. As a result, the magnitude of these correction terms is often significantly smaller than the error in the solution. 

The entropy correction used in this work is minimally diffusive and contact-preserving \cite{van2026choice}, and both theory and numerical experiments indicate that the magnitude of the correction is orders of magnitude smaller than the approximation error for both smooth and discontinuous solutions \cite{chan2025artificial}. As a result, these entropy correction methods are designed to address instabilities related to aliasing and high-wavenumber errors, but do not suppress Gibbs-type oscillations around stronger shocks. 

We note that the enforcement of a semi-discrete entropy inequality can also be performed using artificial viscosity  (AV) \cite{chan2025artificial}; see \ref{sec:SG} for examples. However, since the numerical experiments in this work are purely inviscid, the FCT-based entropy correction is more efficient than the AV-based approach, as it avoids the introduction of a viscous DG discretization. We observe that all one-dimensional results are virtually identical between the FCT-based and AV-based entropy corrections, so we do not include AV-based entropy correction results for brevity. 

\begin{remark}
The FCT-based and AV-based entropy correction approaches differ with respect to their flexibility. The FCT-based entropy correction used in this paper assumes a nodal collocation formulation and summation-by-parts operators; in contrast, the AV-based entropy correction of \cite{chan2025artificial} can be extended to arbitrary choices of basis and quadrature. We defer a careful comparison of these two approaches to a future paper. 
\end{remark}


\subsection{A semi-discrete entropy inequality and volume entropy residual}

To derive an entropy correction method, we first specify the semi-discrete entropy inequality we wish to enforce. We illustrate this by deriving an entropy evolution equation for the 1D formulation; the derivation is more technical but similar in higher dimensions. Testing the formulation \eqref{eq:dgform} with the nodal interpolant of the entropy variables $\bm{v}(\fnt{u})$ and summing over all nodes in $D^k$ yields
\begin{gather}
\sum_i J\hat{\fnt{M}}_{ii}\bm{v}(\fnt{u}_i)^T\td{\fnt{u}_i}{t} + \sum_{i, j=1}^{N+1} \bm{v}(\fnt{u}_i)^T\nor{\fnt{n}_{ij}}\bm{f}_{\rm vol}\LRp{\fnt{u}_i, \fnt{u}_j, \frac{\fnt{n}_{ij}}{\nor{\fnt{n}_{ij}}}} + \nonumber\\
\sum_{i=1}^{N+1}\sum_{j=1}^2\fnt{E}_{ji}\bm{v}(\fnt{u}_i)^T\bm{f}^*_n(\fnt{u}_{f, j}, \fnt{u}_{f, j}^+)= 0.
\end{gather}
Assuming continuity and the chain rule in time, $\bm{v}(\fnt{u}_i)^T\td{\fnt{u}_i}{t} = \pd{S(\fnt{u}_i)}{\bm{u}}\td{\fnt{u}_i}{t} = \td{S(\fnt{u}_i)}{t}$. Noting that the diagonal entries of the mass matrix $\hat{\fnt{M}}_{ii}$ are quadrature weights, we have that
\begin{equation}
\sum_i J\hat{\fnt{M}}_{ii}\bm{v}(\fnt{u}_i)^T\td{\fnt{u}_i}{t} = \sum_i J w_i\td{S(\fnt{u}_i)}{t} \approx \int_{D^k} \pd{S(\bm{u})}{t}.
\label{eq:dSdt_derivation}
\end{equation}
We now introduce the volume entropy residual $\delta_k(\bm{u}_h)$   on $D^k$
\begin{equation}
\delta_k(\bm{u}_h) = \sum_{i, j=1}^{N+1} \bm{v}(\fnt{u}_i)^T\nor{\fnt{n}_{ij}}\bm{f}_{\rm vol}\LRp{\fnt{u}_i, \fnt{u}_j, \frac{\fnt{n}_{ij}}{\nor{\fnt{n}_{ij}}}} + \psi(\fnt{u}_{N+1}) - \psi(\fnt{u}_1).
\end{equation}
Using the sparsity of $\fnt{E}$ yields the cell entropy evolution equation for the formulation \eqref{eq:dgform} in 1D 
\begin{equation}
\label{eq:entropyequation}
\sum_i J\hat{\fnt{M}}_{ii}\td{S(\fnt{u}_i)}{t} + \delta_k(\bm{u}_h) + 
F^*_n(\fnt{u}_{N+1}) + F^*_n(\fnt{u}_1) = 0, 
\end{equation}
where for conciseness, we have introduced a numerical version of the entropy flux $F^*_n(\fnt{u})$ \eqref{eq:psi}
\[
F^*_n(\fnt{u}) = \bm{v}(\fnt{u})^T\bm{f}^*_n(\fnt{u}, \fnt{u}^+) - \psi(\fnt{u})n.
\]
Recall that the 1D cell entropy inequality \eqref{eq:cell_entropy_ineq} we wish to enforce is
\[
\int_{D^k}\pd{S(\bm{u})}{t} + \int_{\partial D^k} F(\bm{u}) n \leq 0,
\]
whose discrete form is 
\[
 \sum_i J w_i\td{S(\fnt{u}_i)}{t} + F_n^*(\bm{u}_{N+1})+ F_n^*(\bm{u}_{1}) \leq 0.
\]
Since $ \sum_i J w_i\td{S(\fnt{u}_i)}{t}\approx \int_{D^k} \pd{S(\bm{u})}{t}$ already appears in \eqref{eq:entropyequation} and since $\int_{\partial D^k} \bm{v}(\bm{u})^T\bm{f}^*_n$ is a consistent approximation to $\bm{v}^T\bm{f}(\bm{u})$ in the entropy inequality, \eqref{eq:entropyequation} is a consistent approximation of \eqref{eq:cell_entropy_ineq} if
\begin{equation}
\delta_k(\bm{u}_h) \geq 0.
\label{eq:vol_entropy_residual}
\end{equation}
Note that $\delta_k(\bm{u}_h)$ can be interpreted as a quadrature approximation of
\[
\int_{D^k} -\pd{\bm{v}(\bm{u})}{x}^T\bm{f}(\bm{u}) + \int_{\partial D^k}\psi(\bm{u}) n.
\]
In general, the sign of $\delta_k(\bm{u}_h)$ is indeterminate, and can lead to violation of the entropy inequality \eqref{eq:cell_entropy_ineq}. The entropy correction approaches in this work add an entropy dissipative correction term to enforce $\delta_k(\bm{u}_h) \geq 0$. We also note that this definition of the volume entropy residual can also be derived directly from the more general modal formulation in \cite{chan2025artificial} involving the $L^2$ projection of the entropy variables.

\subsection{Entropy correction via flux corrected transport}
\label{sec:entropy_correction}
We consider a flux corrected transport (FCT) version of entropy correction in this work, where a high and low order method are blended together using an element-wise blending coefficient designed to satisfy a cell entropy inequality. The low order method is based on the flux differencing formulation \eqref{eq:dgform}, and sets the volume flux and interface flux equal to each other $\bm{f}_{\rm vol} = \bm{f}^*_n$. Taking an entropy dissipative volume flux $\bm{f}^*_n$ then produces a ``low order'' scheme which is entropy stable. This approach forms the foundation for several shock capturing and subcell limiting techniques \cite{rueda2021subcell, hennemann2021provably, zhang2025discontinuous}. This method is restricted to nodal collocation formulations; for a more general entropy correction formulation, we refer the reader to \cite{chan2025artificial} and \ref{sec:SG}. 



For this work, we construct the low order scheme by adding a local Lax-Friedrichs penalization to a non-dissipative flux 
\begin{equation}
\bm{f}_{\rm LxF}(\bm{u}_L, \bm{u}_R, \bm{n}) = \bm{f}^*_n(\bm{u}_L, \bm{u}_R) = \sum_{i=1}^d \bm{f}_{i, {\rm avg}}(\bm{u}_L, \bm{u}_R)\bm{n}_i - \frac{\lambda}{2}(\bm{u}_R - \bm{u}_L),
\label{eq:LxF_type}
\end{equation}
where $\lambda > 0$ is an estimate of the maximum wave speed. Under appropriate estimates of the wavespeed, the local Lax-Friedrichs flux is entropy stable for an ideal gas \cite{guermond2016fast, chen2017entropy, toro2020bounds}. In this work, we utilize the Davis wavespeed estimate $\lambda = \max(\lambda^-, \lambda^+)$. Here, $\lambda^\pm = \LRb{v_n^\pm} + c^\pm$, $c$ is the speed of sound, and $v_n$ is the normal velocity. Assuming that the local Lax-Friedrichs-like flux \eqref{eq:LxF_type} is entropy stable, formulation \eqref{eq:dgform} also satisfies a cell entropy inequality \cite{lin2023positivity}, though it is only first order accurate. 

To improve robustness while retaining accuracy, we blend this low order method with the high order flux differencing formulation described in Section~\ref{sec:APEC} to enforce a cell entropy inequality. We define $\bm{f}_{\rm vol}$ as a blending of a high order EPEC or APEC flux and a low order dissipative flux
\begin{equation}
\bm{f}_{\rm vol}\LRp{\fnt{u}_i, \fnt{u}_j, \frac{\fnt{n}_{ij}}{\nor{\fnt{n}_{ij}}}} = (1-\theta) \bm{f}_{\rm high}\LRp{\fnt{u}_i, \fnt{u}_j, \frac{\fnt{n}_{ij}}{\nor{\fnt{n}_{ij}}}} + \theta \bm{f}_{\rm low}\LRp{\fnt{u}_i, \fnt{u}_j, \frac{\fnt{n}_{ij}}{\nor{\fnt{n}_{ij}}}} 
\label{eq:flux_blend}
\end{equation}
In this work, the low order flux is taken to be an APEC flux with local Lax-Friedrichs jump penalization. This choice of flux is intended to minimize pressure equilibrium errors resulting from the central contribution, although the jump penalty still results in significant pressure equilibrium errors. We describe this in more detail in Section~\ref{sec:apec_ec}. 

If $\theta$ is taken to be constant over an element, the volume contribution to \eqref{eq:dgform} is also a convex combination
\[
\fnt{r}_{i, \theta}(\fnt{u}) = (1 - \theta) \fnt{r}_{i, {\rm high}}(\fnt{u}) + \theta\fnt{r}_{i, {\rm low}}(\fnt{u}) = \fnt{r}_{i, {\rm high}}(\fnt{u}) + \theta \LRp{  \fnt{r}_{i, {\rm low}}(\fnt{u}) - \fnt{r}_{i, {\rm high}}(\fnt{u})},
\]
where $\fnt{r}_{i, {\rm high}}(\fnt{u}), \fnt{r}_{i, {\rm low}}(\fnt{u})$ denote high and low order volume contributions 
\[
\fnt{r}_{i, {\rm high}}(\fnt{u}) = \sum_{j=1}^{N+1}\nor{\fnt{n}_{ij}}\bm{f}_{\rm high}\LRp{\fnt{u}_i, \fnt{u}_j, \frac{\fnt{n}_{ij}}{\nor{\fnt{n}_{ij}}}}, \qquad 
\fnt{r}_{i, {\rm low}}(\fnt{u}) = \sum_{j=1}^{N+1}\nor{\tilde{\fnt{n}}_{ij}}\bm{f}_{\rm low}\LRp{\fnt{u}_i, \fnt{u}_j, \frac{\tilde{\fnt{n}}_{ij}}{\nor{\tilde{\fnt{n}}_{ij}}}}.
\]
Here, $\tilde{\fnt{n}}_{ij}$ denotes algebraic normals defined as in \eqref{eq:dgform} but using sparsified SBP operators described in \cite{pazner2021sparse, lin2023high}. The resulting discretization can be shown to be equivalent to a subcell finite volume scheme at Legendre-Gauss-Lobatto nodes \cite{hennemann2021provably, rueda2022subcell}. We note that the use of sparsified operators is not strictly necessary, and is intended only to reduce algebraic dissipation in the low order contributions at higher orders. 

Then, enforcing $\delta_k(\bm{u}_h) \geq 0$ is equivalent to enforcing 
\[
\sum_{i=1}^{N+1} \bm{v}(\fnt{u}_i)^T\LRp{\fnt{r}_{i, {\rm high}}(\fnt{u}) + \theta \LRp{ \fnt{r}_{i, {\rm low}}(\fnt{u}) -  \fnt{r}_{i, {\rm high}}(\fnt{u})}} + \psi(\fnt{u}_{N+1}) - \psi(\fnt{u}_1)  \geq 0,
\] 
and $\theta$ is given elementwise by
\[
\theta \geq \begin{cases}
\frac{-\min(0, \delta_k(\bm{u}_h))}{\sum_{i = 1}^{N+1} \bm{v}(\fnt{u}_i)^T (\fnt{r}_{i, {\rm low}}(\fnt{u}) - \fnt{r}_{i, {\rm high}}(\fnt{u}))} & \delta_k(\bm{u}_h) < 0\\
0 & \text{otherwise}.
\end{cases}
\]
Note that the denominator is non-zero if the low order flux is entropy stable \cite{lin2023high, christner2025entropy}.


\subsubsection{Dissipation and pressure equilibrium errors}

Utilizing an APEC flux as a volume flux $\bm{f}_{\rm vol} = \bm{f}_{\rm APEC}$ within the flux differencing formulation \eqref{eq:dgform} reduces pressure-equilibrium errors. However, the full DG scheme is only APEC if the interface flux is also APEC. To the authors' knowledge, for conservative formulations common interface fluxes such as local Lax-Friedrichs are only pressure-equilibrium conserving for polytropic ideal gas and stiffened gas EOS \cite{ching2025conservative}. Consider the local Lax-Friedrichs penalization
\[
\sum_{i=1}^d \bm{f}_{i, {\rm avg}} \bm{n}_i - \frac{\lambda}{2}\jump{\bm{u}},
\]
where $\bm{f}_{i, {\rm avg}}$ is a symmetric non-dissipative flux. 
Note that taking the inner product of $\pd{\bm{u}}{t}$ with state
\[
\bm{w} = \begin{bmatrix}
\frac{1}{2}v_1^2 - \LRl{\pd{\rho e}{\rho}}_p\\
- v_1\\
1
\end{bmatrix}
\]
yields $\bm{w}^T\pd{\bm{u}}{t} = \LRl{\pd{\rho e}{p}}_\rho \pd{p}{t}$. 
If $\bm{f}_{i, {\rm avg}}$ is exactly PEC, then the contributions $\bm{w}^T\bm{f}_{i, {\rm avg}}$ also vanish. 
In order to retain PEC in the presence of dissipation, we also require $\bm{w}^T \jump{\bm{u}} = 0$ assuming constant pressure and velocity. This condition is equivalent to 
\[
\bm{w}^T\jump{\bm{u}} = \LRp{\frac{1}{2}v_1^2 - \LRl{\pd{\rho e}{\rho}}_p}\jump{\rho} - v_1^2 \jump{\rho} + \jump{\rho e} + \frac{1}{2}v_1^2 \jump{\rho} =  \jump{\rho e} - \LRl{\pd{\rho e}{\rho}}_p\jump{\rho} = 0,
\]
which is identical to \eqref{eq:pe_condition}. Thus, for ideal and stiffened gas where $\LRl{\pd{\rho e}{\rho}}_p$ is constant with respect to $\rho, p$ and $\jump{\rho e}$ is a constant multiple of $\jump{\rho}$, the local Lax-Friedrichs flux is EPEC. However, for the van der Waals or Peng-Robinson EOS, this is not the case, and the addition of local Lax-Friedrichs dissipation results in pressure equilibrium violations.



\section{Numerical experiments}
\label{sec:4}

In this section, we analyze the behavior of the proposed entropy correction EPEC/APEC schemes for representative problems. We focus on the van der Waals and Peng-Robinson equations of state; results for the stiffened gas equation of state are discussed in \ref{sec:SG}. All results are produced using the open-source Julia library Trixi.jl \cite{schlottkelakemper2021purely, ranocha2022adaptive, ranocha2023efficient} and OrdinaryDiffEq.jl \cite{rackauckas2017differentialequations}.
Unless otherwise specified, the fourth-order, five-stage, low-storage Runge-Kutta method of Carpenter and Kennedy \cite{carpenter1994fourth} is used for time integration.
ForwardDiff.jl \cite{revels2016forward} is used to compute derivatives, and Plots.jl \cite{christ2023plots} is used for the visualizations.
All code to reproduce the results is available online \cite{chan2026nodalRepro}.

\textbf{Van der Waals.} The van der Waals EOS is described by the following relations \cite{coppola2026pep}
\begin{gather*}
p(V, T) = \frac{\rho R T}{1 - \rho b} - a \rho^2, \qquad e(V, T) = c_v T - a \rho,
\\
s(V, T) = c_v \log(T) + R \log(V - b) + s_0,
\end{gather*}
where $s_0$ is some arbitrary reference value. The speed of sound is given by
\[
c^2 = \gamma (\gamma - 1)\frac{e + \rho a}{(1 - \rho b)^2} - 2 a \rho
\]
Unless otherwise stated, the van der Waals parameters used are given by \cite{coppola2026pep}
\begin{gather*}
a = 5.94768233178 \times 10^{-3}, \qquad
b = 1.72768204288 \times 10^{-3}, \\
\gamma = 1.4, \qquad
R = 1, \qquad c_v = \frac{R}{\gamma - 1} = 2.5.
\end{gather*}

\textbf{Peng-Robinson.} The Peng-Robinson EOS is described by the following relations \cite{ma2017entropy}
\begin{gather*}
p(V, T) = \frac{R T}{V - b} - \frac{a(T)}{V^2 + 2 b V - b^2},\\
e(V, T) = c_{v,0} T + K  (a(T) - T a'(T)),\\
s(V, T) = c_{v,0} \log(T) + R \log(V - b) - a'(T) K,
\end{gather*}
where the auxiliary functions are defined as
\begin{gather*}
a(T) = a_0 \LRp{1 + \kappa  \LRp{1 - \sqrt{\frac{T}{T_c}}}}^2,
\qquad
K(V) = \frac{1}{2\sqrt{2} b} \log\left( \frac{V + (1 - \sqrt{2})b}{V + (1 + \sqrt{2})b}\right).
\end{gather*}
%
Finally, the speed of sound is given by
\[
c = \sqrt{-\gamma V^2\LRl{\frac{\partial p}{\partial V}}_{T}}
\]
where $\gamma = \frac{c_p}{c_v}$ and the specific heats are given by
\[
c_v = c_{v,0} - K(V)\,T\,a''(T), 
\qquad
c_p = c_v
      - T\,
        \frac{
          \left(\left.\dfrac{\partial p}{\partial T}\right|_{V}\right)^2
        }{
          \left.\dfrac{\partial p}{\partial V}\right|_{T}
        }.
\]
Unless otherwise stated, all numerical experiments in this work use parameters for nitrogen \cite{ma2017entropy, ma2018modeling}
\begin{gather*}
R = 296.8\,\mathrm{J\,kg^{-1}\,K^{-1}}, \quad
p_c = 3.4\times 10^6\,\mathrm{Pa}, \quad
T_c = 126.2\,\mathrm{K}, \\
c_{v,0} = 743.2\,\mathrm{J\,kg^{-1}\,K^{-1}}, \quad
\omega = 0.0372,\\
b = 0.077796\,\frac{R T_c}{p_c}, \qquad
a_0 = 0.457236\,\frac{\LRp{R T_c}^2}{p_c}, \qquad
\kappa = 0.37464 + 1.54226\,\omega - 0.26992\,\omega^2.
\end{gather*}

\subsection{Comparison of EPEC fluxes}
\label{sec:comparison_EPEC}
We first compare the EPEC fluxes \eqref{eq:ranocha_EPEC} and \eqref{eq:coppola_EPEC}. We utilize a flux differencing DG discretization where both the volume and surface fluxes are non-dissipative (e.g., central, EPEC, or APEC), and consider a smooth supercritical density wave under a van der Waals EOS from \cite{coppola2026pep}. 
%
The domain is periodic over $[-0.5, 0.5]$, and the initial condition is given by
\begin{gather*}
\rho(x,0) = \rho_c \LRp{A + B e^{\sin(2\pi x)}}, \qquad
v_1(x,0) = 1, \qquad p(x,0) = 100,
\end{gather*}
where $\rho_c = (3b)^{-1} \approx 192.9367 \mathrm{kg\,m^{-3}}$ is the critical density, $A = 0.07$, and $B = 0.12$. 

First, we observe that for a flux differencing DG formulation, the EPEC flux of Coppola, de Michele, and Aiello \eqref{eq:coppola_EPEC} requires a small CFL for stability. For $N=3$, we obtain stable computations for a CFL of $0.05$ on 4 elements and $0.002$ on 8 elements, and a similarly small or even smaller CFL was used in \cite{coppola2026pep}. In contrast, the correction-based EPEC flux \eqref{eq:ranocha_EPEC} and APEC flux \eqref{eq:APEC_KEEP} of \cite{terashima2025approximately} both remain stable up to a larger CFL of $1.5$.

This sensitivity can be traced back to the density average \eqref{eq:coppola_rho_avg}. While its value stays close to $\avg{\rho}$, its derivative with respect to the two states scales like $1/\jump{\LRl{\pd{\rho e}{\rho}}_{p}}$, so that the linearization of the semi-discretization is amplified whenever neighboring states have nearly equal values of $\LRl{\pd{\rho e}{\rho}}_{p}$. This is illustrated in Figure~\ref{fig:epec_stiffness}, which shows the evolution of $-\min_i \mathrm{Re}(\lambda_i)$, where $\lambda_i$ denote the eigenvalues of the Jacobian of the flux differencing DG formulation, computed by automatic differentiation along the solution trajectory. For the EPEC flux of Coppola, de Michele, and Aiello \eqref{eq:coppola_EPEC}, this quantity is several orders of magnitude larger than for the central, correction-based EPEC, and APEC fluxes, and it varies strongly in time, which explains the small time step required for stability. Since the amplification grows under mesh refinement, the CFL for which we obtain stable computations also decreases as the mesh is refined, so that the restriction is not of CFL type. Since the amplification is intermittent, stability is in addition not monotone in the CFL: on 8 elements, for instance, computations with a CFL of $0.01$ and $0.002$ complete, whereas a CFL of $0.005$ does not. For the same reason, it is sensitive to differences at the level of round-off: the computation with a CFL of $0.01$ on 8 elements completes on one machine, but diverges on another, which is why we use a CFL of $0.002$ on this mesh in the following. The sensitivity moreover increases sharply with the polynomial degree. For $N=7$, we obtain a stable computation for a CFL of $0.001$ on 4 elements, while neither $0.002$ nor $0.0005$ do, and on 8 elements none of the CFLs we tested between $0.01$ and $0.0005$ complete the simulation. The correction-based EPEC flux \eqref{eq:ranocha_EPEC} and the APEC flux \eqref{eq:APEC_KEEP} remain stable for $N=7$ at a CFL of $0.5$ on both meshes.

\begin{figure}
\centering
\subfloat[$N=3$, 4 elements]{\includegraphics[width=0.45\textwidth]{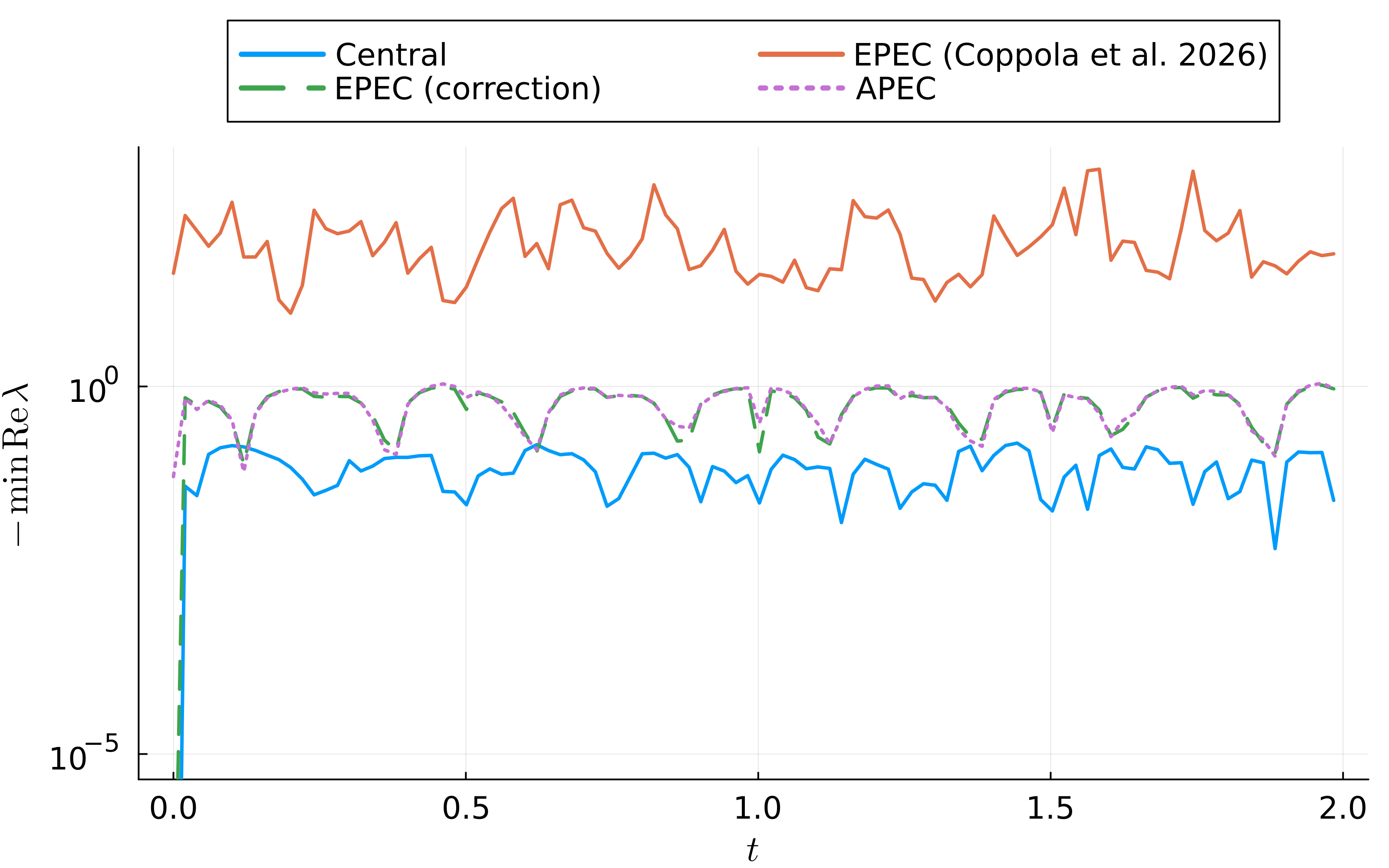}}
\hspace{.2em}
\subfloat[$N=3$, 8 elements]{\includegraphics[width=0.45\textwidth]{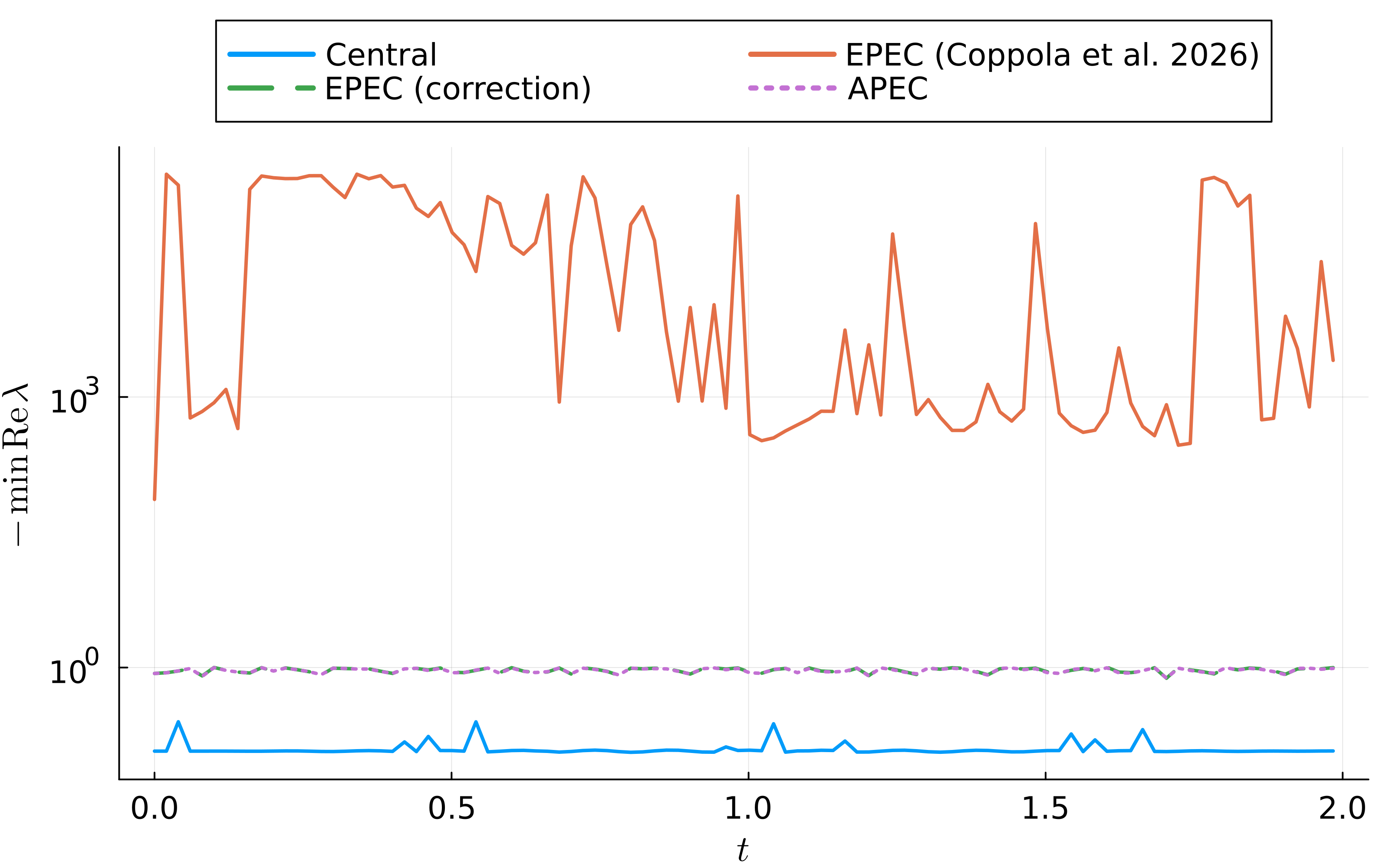}}
\caption{Evolution of $-\min_i \mathrm{Re}(\lambda_i)$, where $\lambda_i$ denote the eigenvalues of the Jacobian of the flux differencing DG formulation with EPEC fluxes \eqref{eq:ranocha_EPEC} and \eqref{eq:coppola_EPEC}, APEC flux \eqref{eq:APEC_KEEP}, and a central flux. CFL constants of $0.01$ and $0.002$ are used for \eqref{eq:coppola_EPEC} on 4 and 8 elements, respectively, and a CFL of $0.5$ for all other fluxes.}
\label{fig:epec_stiffness}
\end{figure}

\begin{figure}
\centering
\subfloat[$N=3$, 4 elements, pressure error]{\includegraphics[width=0.4\textwidth]{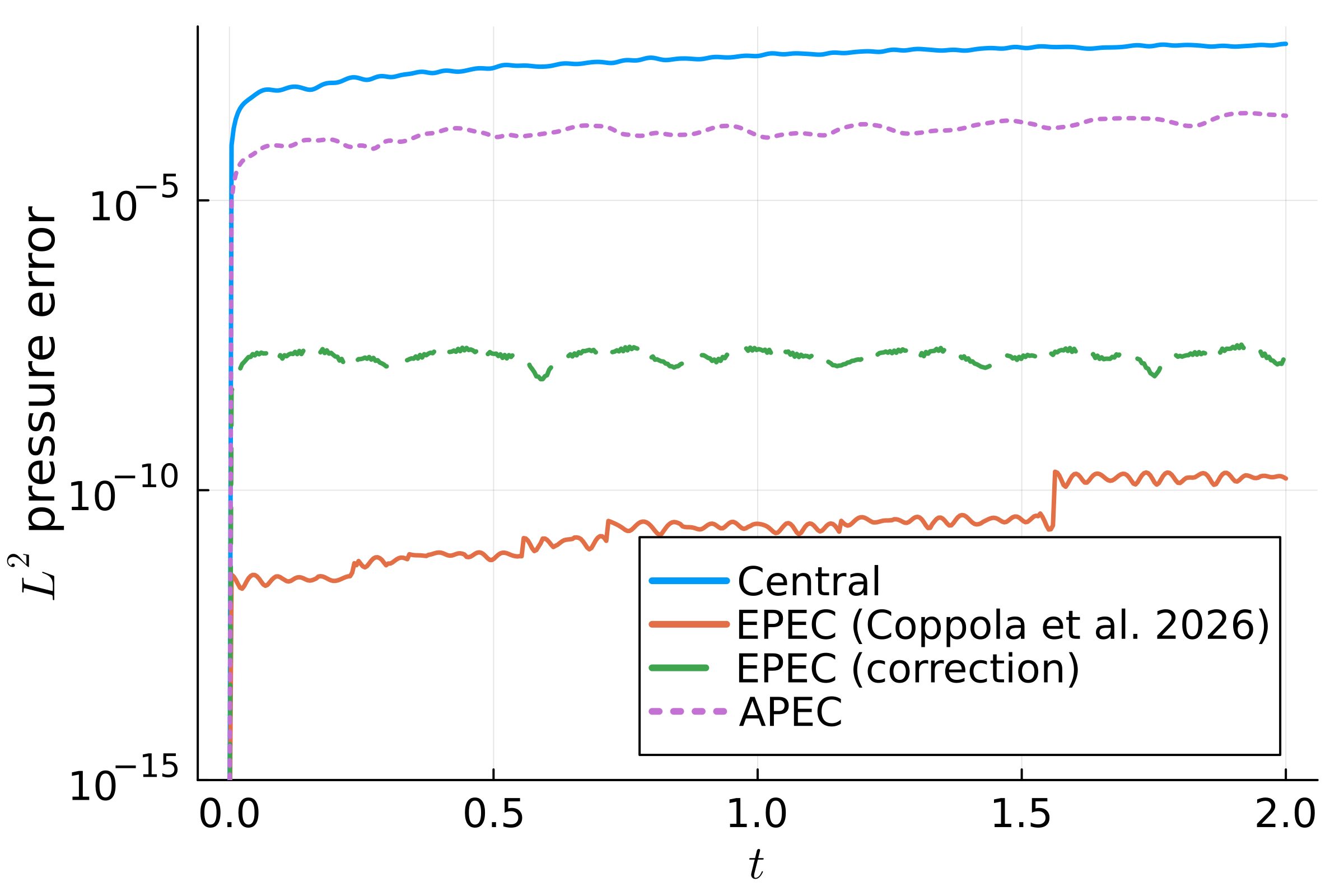}}
\hspace{.2em}
\subfloat[$N=3$, 8 elements, pressure error]{\includegraphics[width=0.4\textwidth]{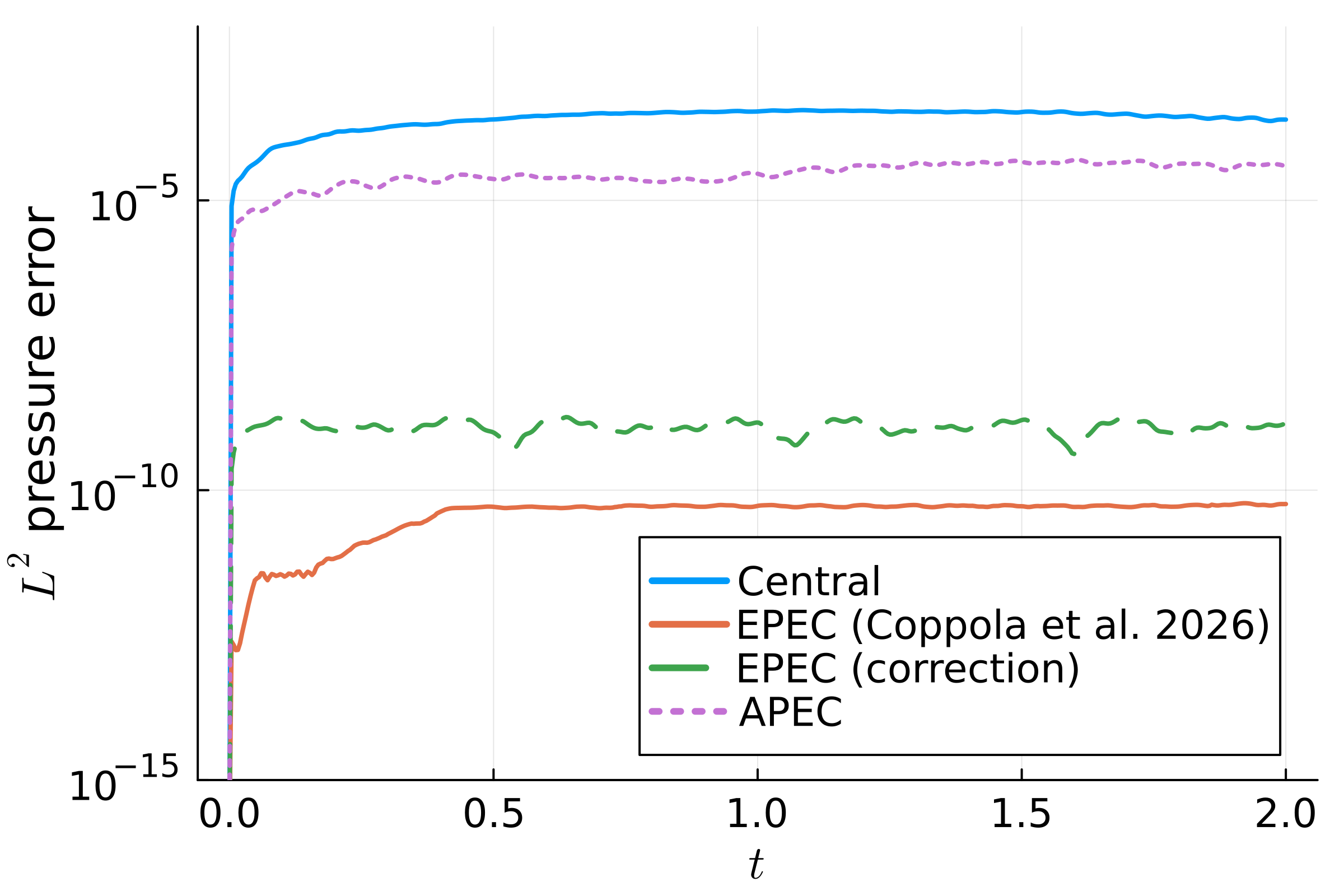}}
\caption{Comparison of flux differencing DG with EPEC fluxes \eqref{eq:ranocha_EPEC} and \eqref{eq:coppola_EPEC} and APEC flux \eqref{eq:APEC_KEEP}.  CFL constants of $0.01$ and $0.002$ are used for \eqref{eq:coppola_EPEC} on 4 and 8 elements, respectively. A CFL of $0.5$ is used for all other fluxes.}
\label{fig:epec_comparison}
\end{figure}

Next, we compare the evolution of the error over time for several different fluxes. In addition to EPEC and APEC fluxes, we compare the performance of central fluxes, for which the flux differencing DG formulation reduces to a standard DG weak form with a central surface flux. CFL values of $0.01$ and $0.002$ are used for the EPEC flux of Coppola, de Michele, and Aiello \eqref{eq:coppola_EPEC} on the coarser and finer mesh, respectively. These lower values are required for stability. A CFL of $0.5$ is used for all other fluxes.


Figure~\ref{fig:epec_comparison} shows the evolution of the pressure error until final time $T = 2$ for degree $N=3$ polynomials on uniform meshes of 4 and 8 elements. Unsurprisingly, the pressure error is lowest for EPEC/APEC fluxes. The EPEC flux of Coppola, de Michele, and Aiello \eqref{eq:coppola_EPEC} maintains the lowest pressure equilibrium error overall by a few orders of magnitude, although this error grows over the course of the simulation on the coarser mesh. The EPEC correction \eqref{eq:ranocha_EPEC} maintains a pressure equilibrium around $1\times 10^{-8}$ to $1\times 10^{-9}$ depending on the resolution.

Reducing the CFL for the EPEC correction \eqref{eq:ranocha_EPEC} decreases the pressure equilibrium error further. The growth of the pressure equilibrium error observed for \eqref{eq:coppola_EPEC} is likewise a time integration effect: reducing the CFL to $0.001$ lowers it to the order of $1\times 10^{-13}$ over the entire time interval on both meshes, which is several orders of magnitude below the error of all other fluxes considered here.

\subsection{APEC fluxes with interface dissipation}

While exact pressure equilibrium conservation is attractive, all EPEC fluxes are observed to be numerically sensitive for highly non-ideal EOS. We illustrate this for the Peng-Robinson EOS under transcritical conditions using the density wave of \cite{ma2017entropy}. The initial conditions are given by 
\begin{equation}
\rho = \frac{\rho_{\min} + \rho_{\max}}{2} +
          \frac{\rho_{\max} - \rho_{\min}}{2}  \sin(2 \pi (x - v_1 t)), 
          \qquad 
v_1 = 100, \qquad p = 5e6 \,\text{Pa},
\label{eq:transcritical_wave}
\end{equation}
where $\rho_{\min} = 56.9 \, \text{kg/m}^3$ and $\rho_{\max} = 739.1 \, \text{kg/m}^3$ and the domain is $[-0.5, 0.5]$. Both EPEC fluxes \eqref{eq:ranocha_EPEC} and \eqref{eq:coppola_EPEC} result in flux differencing schemes which diverge rapidly independently of mesh size or CFL. However, the APEC flux of \cite{terashima2025approximately} remains stable up to a final time of $t = 0.01$ (one period). Both of these results are consistent with observations in \cite{coppola2026pep}. 

Next, we consider the effect of adding local Lax-Friedrichs-like (LxF) interface dissipation on pressure equilibrium errors. Since only APEC fluxes currently appear viable for highly non-ideal EOS, some level of pressure equilibrium error must be introduced by volume terms in the flux differencing \eqref{eq:dgform}. Moreover, the magnitude of the jump, and thus the dissipation added by local Lax-Friedrichs-type jump penalization, is $O(h^{N+1})$ for sufficiently regular solutions \cite{cockburn1998runge, persson2006sub}. In contrast, the analyses of \cite{terashima2025approximately, degrendele2025construction} indicate that APEC schemes yield smaller leading coefficients in expressions for pressure errors but do not affect the order of accuracy. Thus, at higher orders and sufficiently refined grids, the pressure errors introduced by interface dissipation should be small relative to the pressure error introduced by APEC flux differencing in the volume term. 

\begin{figure}
\centering
\subfloat[$N=0$, $256$ elements]{\includegraphics[width=0.45\textwidth]{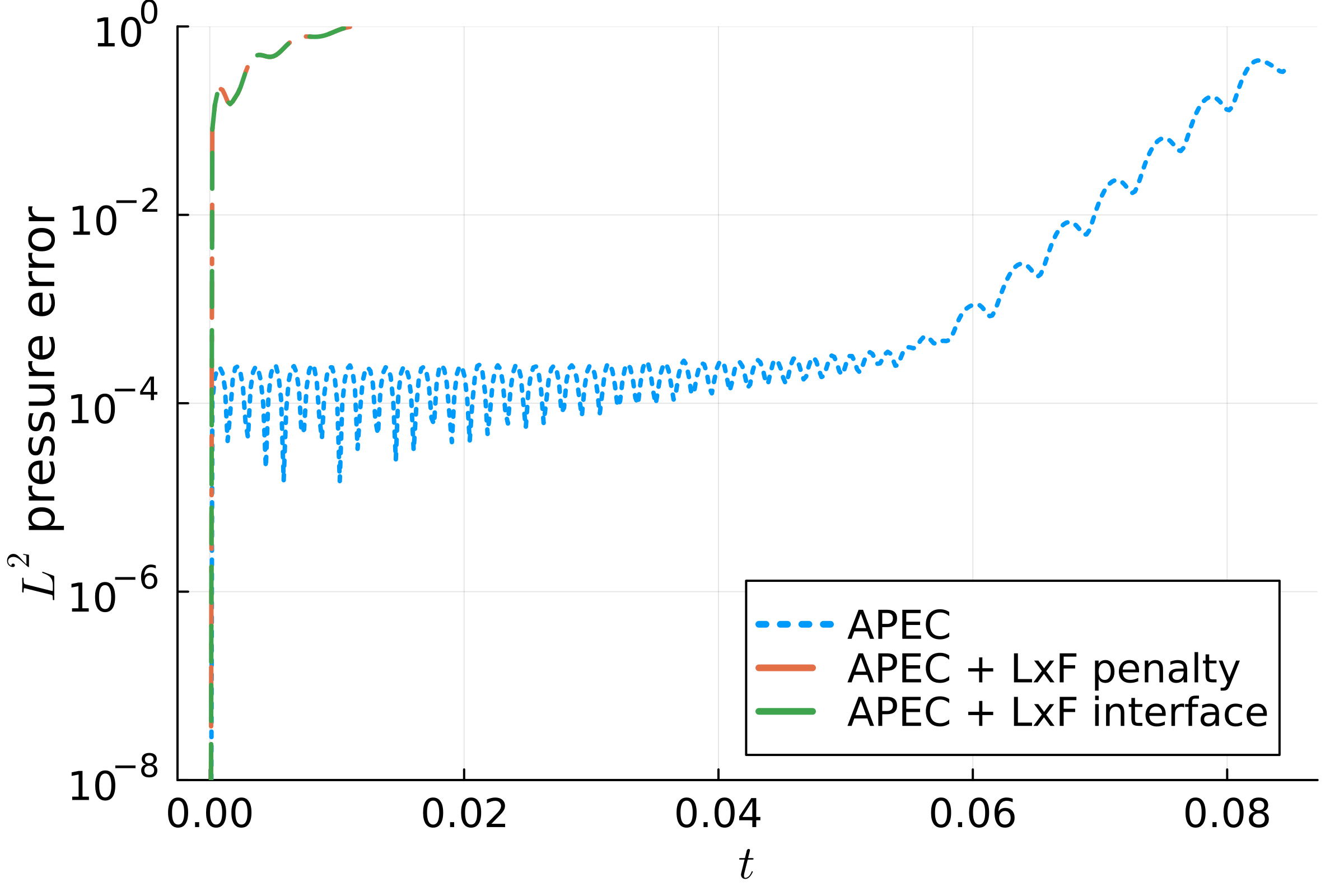}}
\subfloat[$N=1$, $32$ elements]{\includegraphics[width=0.45\textwidth]{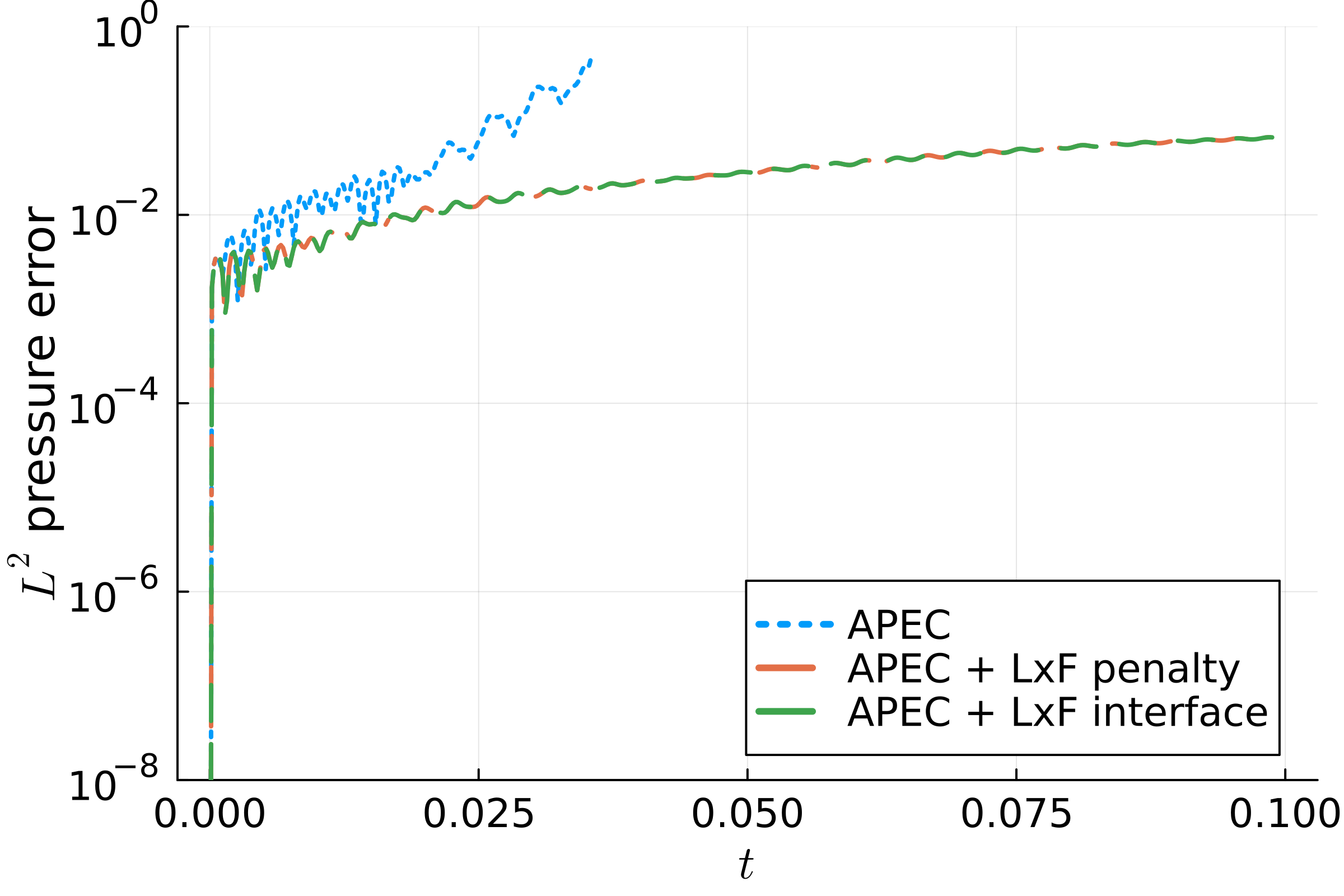}}\\
\subfloat[$N=3$, $16$ elements]{\includegraphics[width=0.45\textwidth]{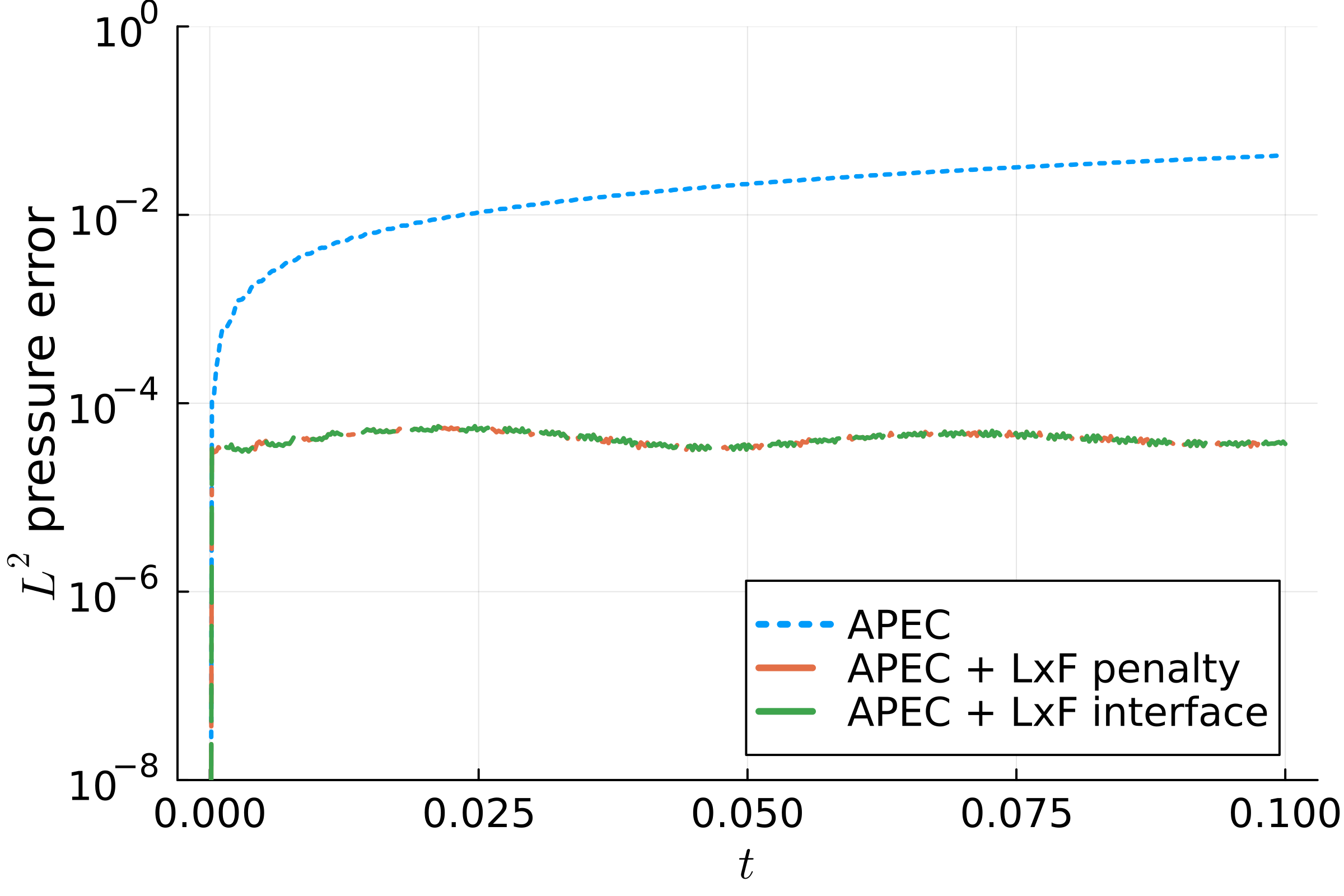}}
\subfloat[$N=7$, $8$ elements]{\includegraphics[width=0.45\textwidth]{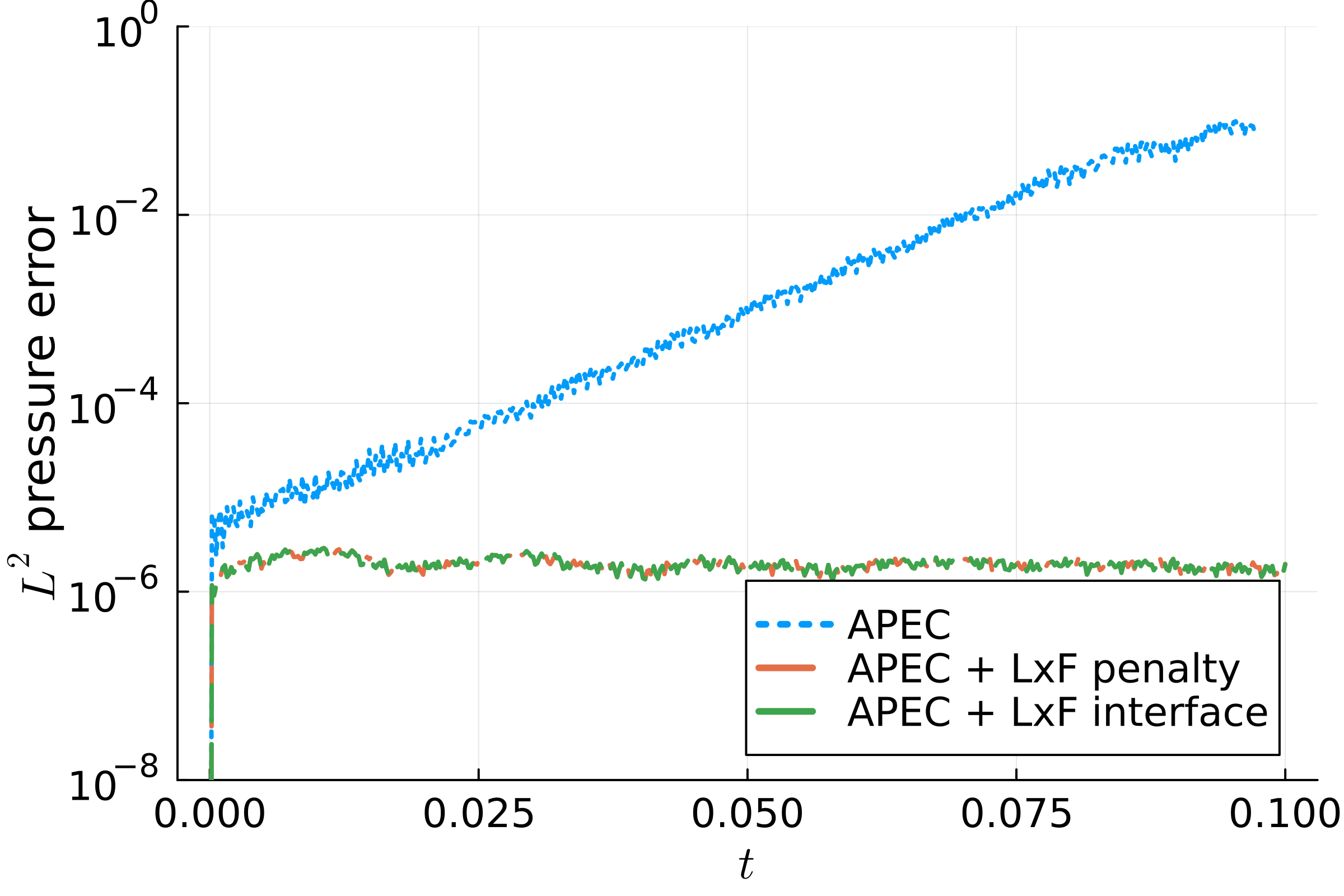}}
\caption{Comparison of $L^2$ pressure errors over time for a pure APEC DG formulation, an APEC + LxF interface penalty formulation, and an APEC + LxF interface flux formulation.  }
\label{fig:APEC_interface_flux}
\end{figure}

Figure~\ref{fig:APEC_interface_flux} illustrates this for the transcritical density wave at a longer final time 0.1, or 10 convective cycles. We assume the volume flux is APEC, and compare three different choices of surface fluxes: a pure APEC surface flux 
\[
\bm{f}_{\rm APEC}(\bm{u}^-, \bm{u}^+, \bm{n}) = 
\sum_{i=1}^d \bm{f}_{i, \rm APEC}(\bm{u}^-, \bm{u}^+) \bm{n}_i,
\]
an APEC surface flux with LxF interface penalty 
\[
\bm{f}_{\rm APEC-LxF}(\bm{u}^-, \bm{u}^+, \bm{n}) = 
\sum_{i=1}^d \bm{f}_{i, \rm APEC}(\bm{u}^-, \bm{u}^+) \bm{n}_i - \frac{\lambda}{2}(\bm{u}^+ - \bm{u}^-),
\]
and a standard local Lax-Friedrichs interface flux
\[
\bm{f}_{\rm LxF}(\bm{u}^-, \bm{u}^+, \bm{n}) = 
\sum_{i=1}^d \avg{\bm{f}_{i}}\bm{n}_i - \frac{\lambda}{2}(\bm{u}^+ - \bm{u}^-).
\]
First, we observe that, with the exception of the $N=0$ case (e.g., first order finite volumes), pressure errors are improved by adding interface dissipation. Moreover, the pure APEC scheme develops large errors or diverges for the degree $N=0$ and $N=1$ cases, while the schemes with interface dissipation all run to the final time. We also observe that there is no significant difference between using a standard local Lax-Friedrichs flux and an APEC interface flux + a LxF interface penalty.  

\subsection{APEC fluxes with interface dissipation and entropy correction}
\label{sec:apec_ec}

Finally, we investigate the effect of enforcing a cell entropy inequality via entropy correction. For this section, we assume that the volume flux in \eqref{eq:dgform} is APEC, and that the surface flux is APEC with a local Lax-Friedrichs penalization. The low order subcell finite volume scheme used to enforce the cell entropy inequality \eqref{eq:flux_blend} is constructed using the same surface flux. For sufficiently accurate quadrature, the entropy residual $\delta_k(\bm{u}_h)$ is $O(h^{2N+2+d})$ in $d$ dimensions \cite{chan2025artificial}, and artificial viscosity versions of entropy correction result in an artificial viscosity coefficient which is $O(h^{2N+2})$ for smooth solutions \cite{van2026choice}. Even for low-regularity solutions, $\delta_k(\bm{u}_h)$ can be shown to be the product of two $L^2$ best approximation errors, and is typically much smaller than the approximation error. 

\begin{figure}[h]
\centering
\subfloat[Smooth wave, $N=3$, 4 elements]{\includegraphics[width=0.42\textwidth, trim={0 0cm 0 0cm}, clip]{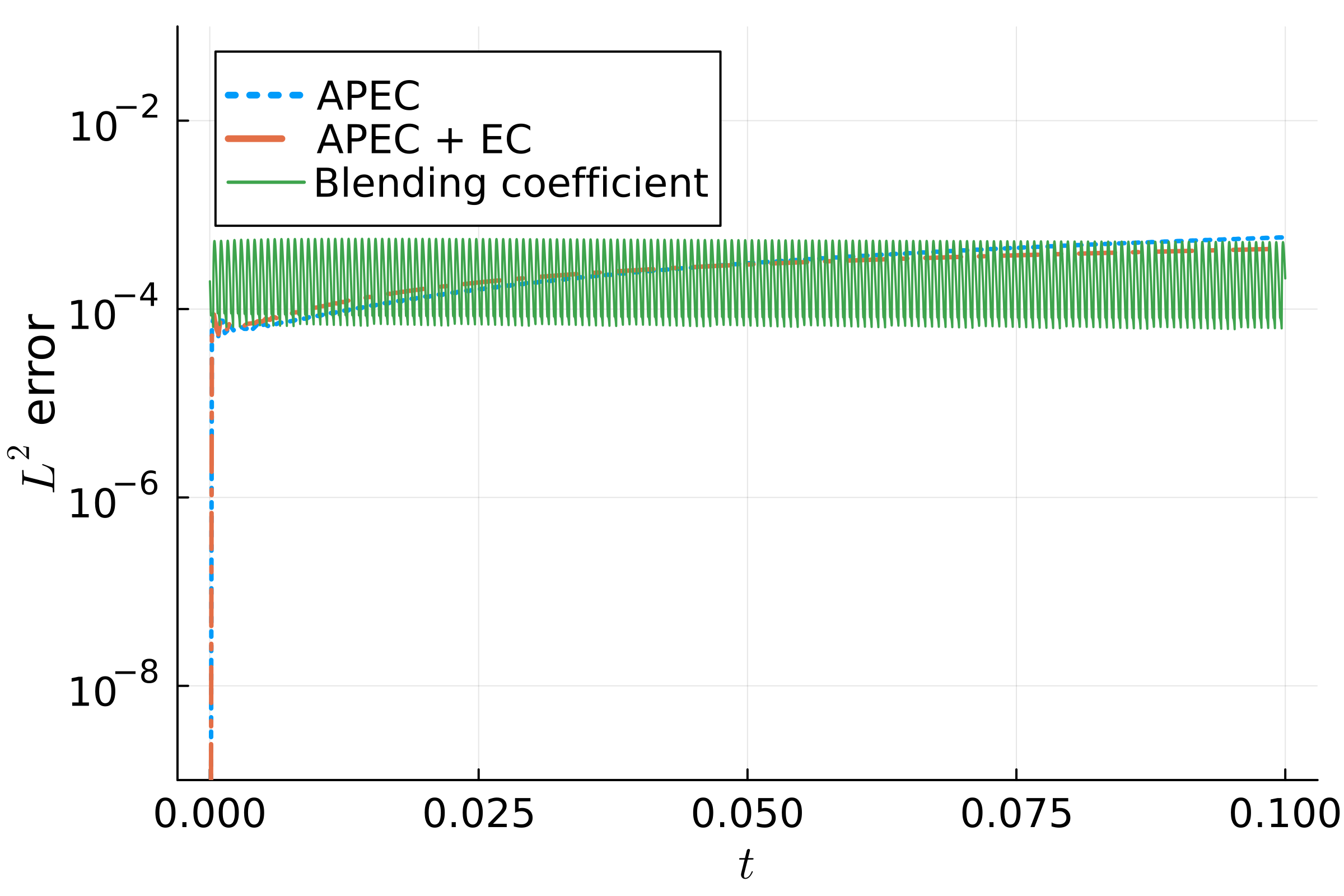}}
\hspace{1em}
\subfloat[Sharp wave, $N=3$, 4 elements]{\includegraphics[width=0.42\textwidth, trim={0 0cm 0 0cm}, clip]{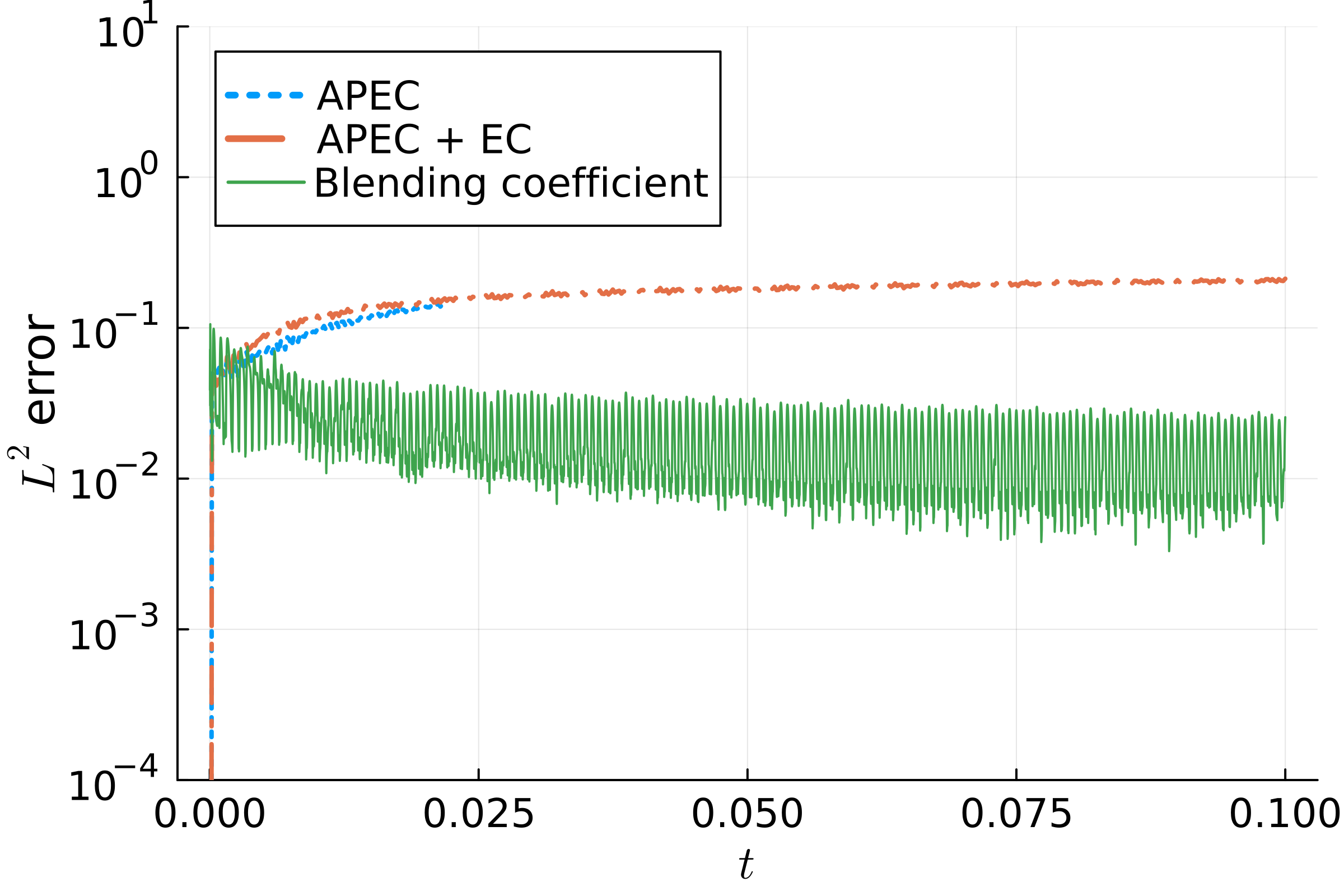}}\\
\subfloat[Smooth wave, $N=7$, 3 elements]{\includegraphics[width=0.42\textwidth, trim={0 0cm 0 0cm}, clip]{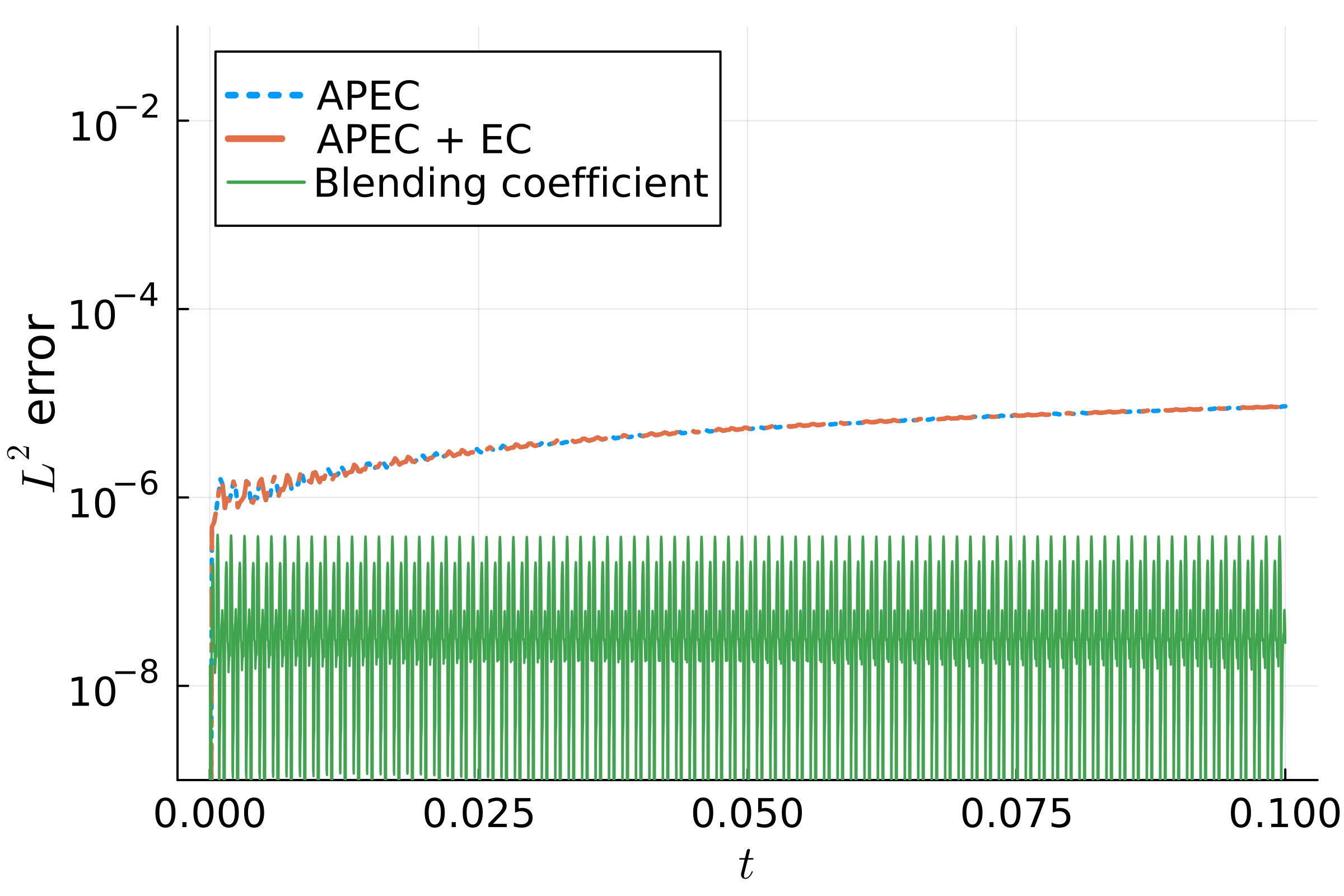}}
\hspace{1em}
\subfloat[Sharp wave, $N=7$, 3 elements]{\includegraphics[width=0.42\textwidth, trim={0 0cm 0 0cm}, clip]{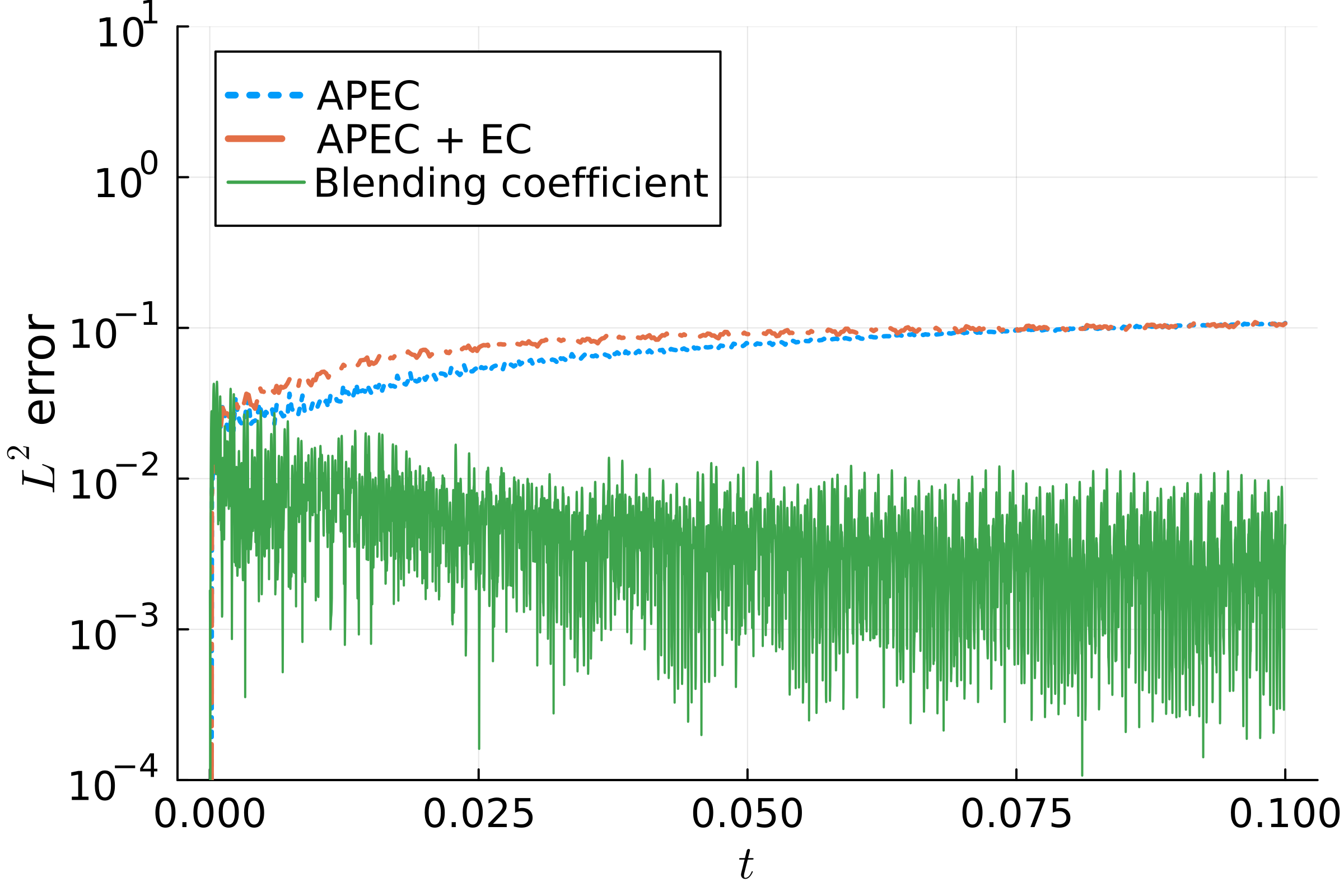}}
\caption{Evolution of the relative $L^2$ error and maximum value of the (unnormalized) entropy correction factor $\theta$ over all elements for both smooth \eqref{eq:transcritical_wave} and sharp versions of the transcritical density wave \eqref{eq:transcritical_wave_sharp}. }
\label{fig:apec_ec_wave}
\end{figure}
Figure~\ref{fig:apec_ec_wave} illustrates this by plotting the evolution of the relative $L^2$ error and maximum value of the (unnormalized) entropy correction blending coefficient $\theta$ over all elements for both the smooth transcritical density wave \eqref{eq:transcritical_wave} and a version of the transcritical density wave with sharper gradients
\begin{equation}
\rho = \frac{\rho_{\min} + \rho_{\max}}{2} +
          \frac{\rho_{\max} - \rho_{\min}}{2} \tanh\LRp{20 \LRb{\sin(\pi (x - v_1 t))} - \frac{3}{4}},
          \qquad 
v_1 = 100, \qquad p = 5e6 \,\text{Pa}.
\label{eq:transcritical_wave_sharp}
\end{equation}

We observe that the errors for APEC flux differencing with and without entropy correction are virtually identical for both $N=3$ and $N=7$, and the magnitude of $\theta$ relative to the error decreases as the polynomial degree $N$ increases independently of whether the solution is smooth or possesses steep gradients.  Moreover, we observe that entropy correction improves robustness; for degree $N=3$ and $4$ elements, APEC flux differencing diverges before $t = 0.025$ for the sharp transcritical wave, while APEC with entropy correction runs stably to the final time. However, we note that (in contrast to the ideal gas case) the improvement in robustness from entropy correction still depends on the resolution; for example, APEC flux differencing with entropy correction diverges for $N=3$ if $3$ elements are used instead of $4$. 



%

\subsubsection{Transcritical shock tube}
\label{sec:transcritical_shock}

We next consider the transcritical shock tube from \cite{ma2017entropy}. The domain is $\LRs{-0.5,\,0.5}$, and the initial condition is given by the discontinuous state
\begin{gather*}
\rho, v_1, p = \begin{cases}
800\,\mathrm{kg\,m^{-3}}, 0, 60\times 10^6\,\mathrm{Pa}, & x < 0 \\
80\,\mathrm{kg\,m^{-3}}, 0, 6\times 10^6\,\mathrm{Pa}, & x > 0.
\end{cases}
\end{gather*}
The simulation is run until final time $T = 5\times 10^{-4}\,\mathrm{s}$ with a CFL number of $0.3$. 

\begin{figure}
\centering
\subfloat[Density, $N=3$, $128$ elements]{\includegraphics[width=0.45\textwidth, trim={0 0cm 0 3cm}, clip]{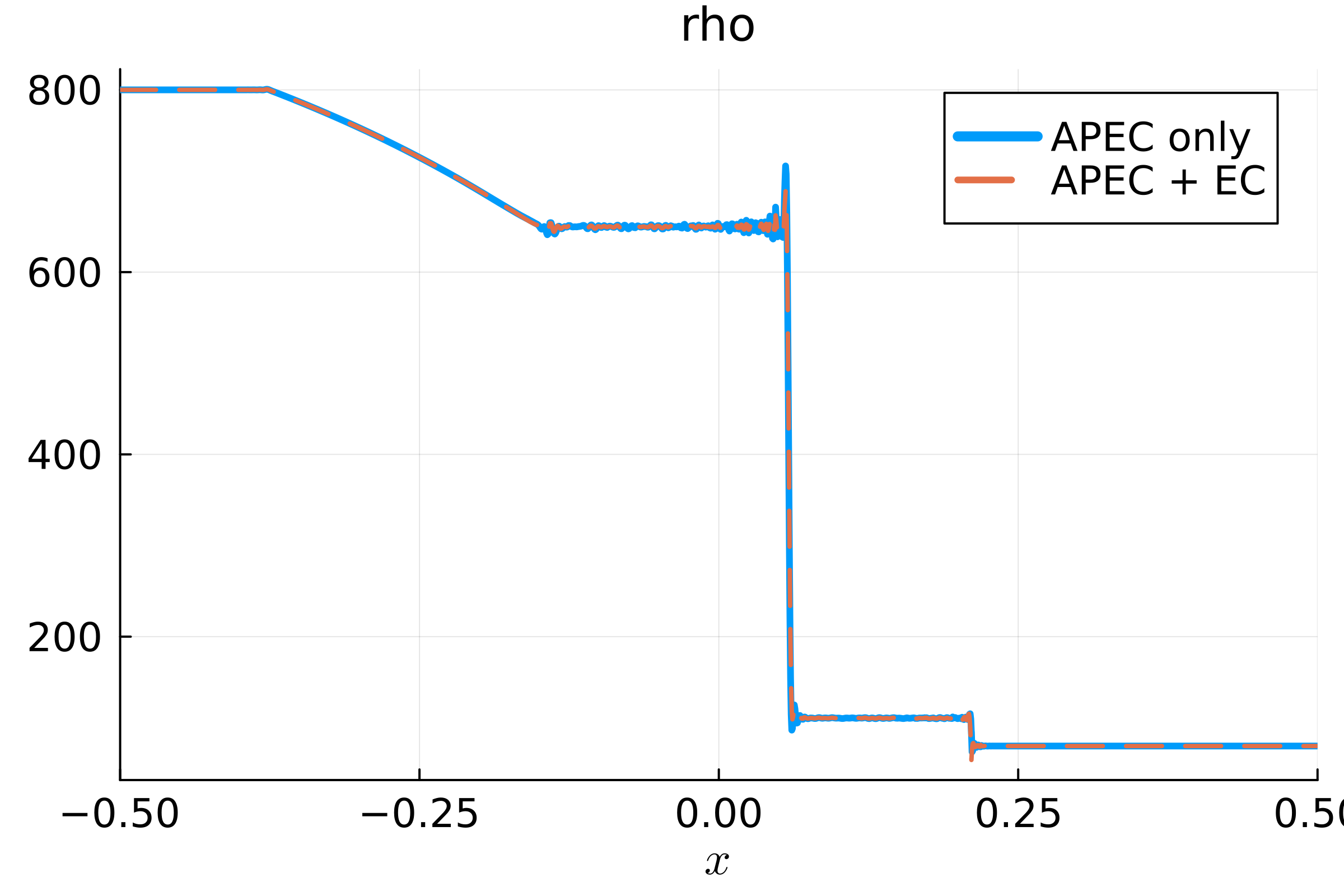}}
\subfloat[Pressure, $N=3$, $128$ elements]{\includegraphics[width=0.45\textwidth, trim={0 0cm 0 3cm}, clip]{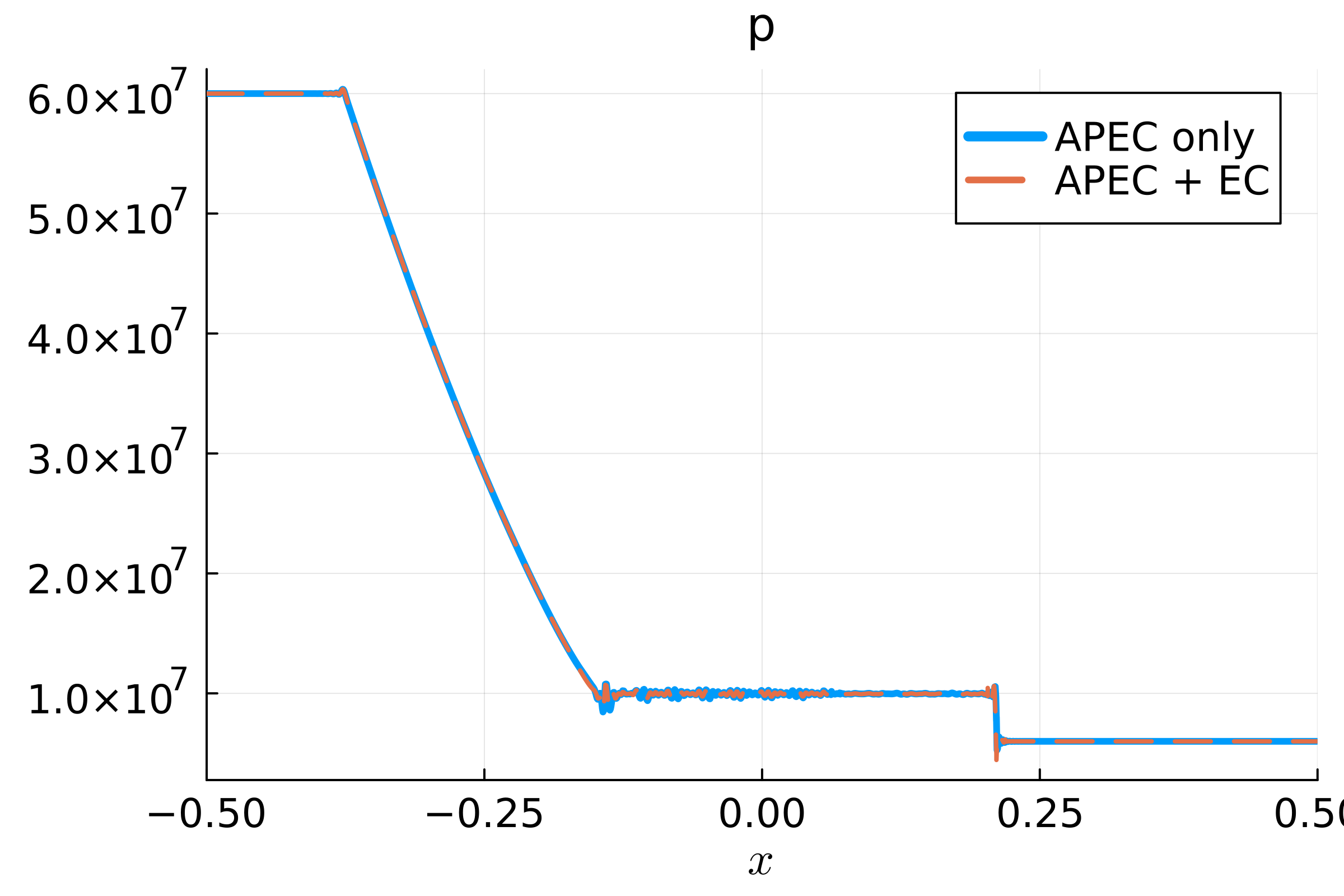}}\\
\subfloat[Density, $N=7$, $64$ elements]{\includegraphics[width=0.45\textwidth, trim={0 0cm 0 3cm}, clip]{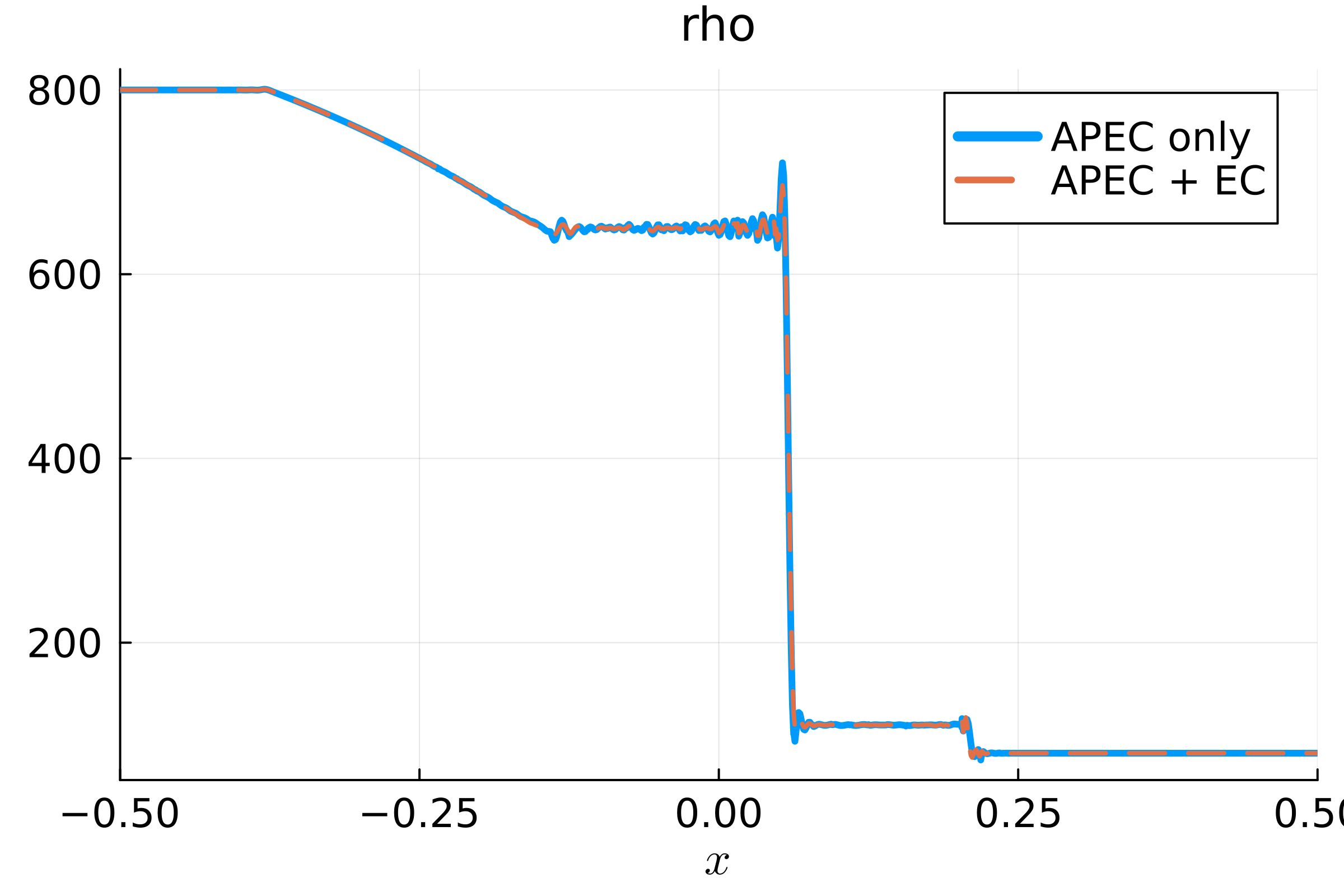}}
\subfloat[Pressure, $N=7$, $64$ elements]{\includegraphics[width=0.45\textwidth, trim={0 0cm 0 3cm}, clip]{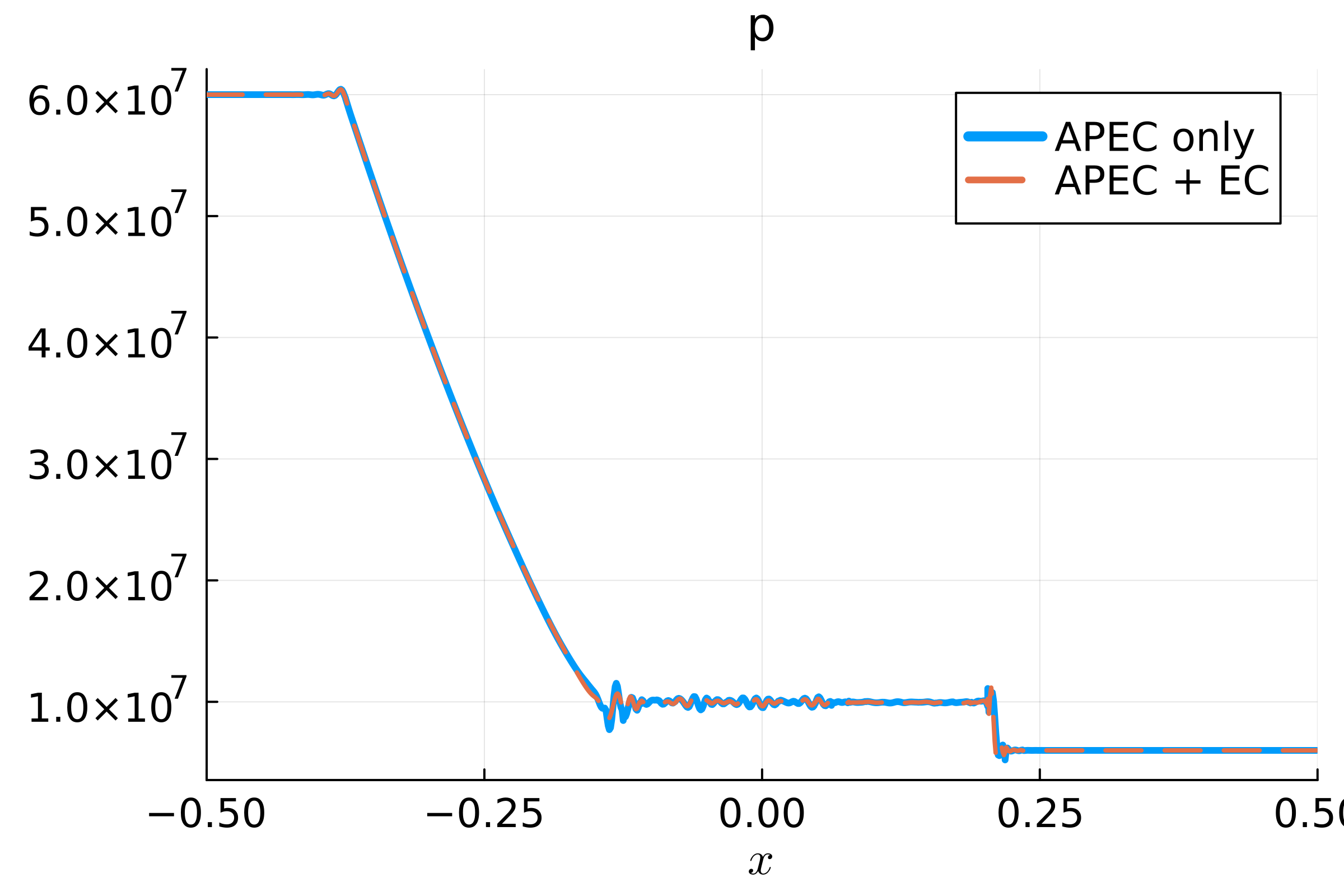}}\\
\caption{Density and pressure for the transcritical shock tube.} 
\label{fig:transcritical_shock}
\end{figure}

The APEC flux differencing DG formulation (APEC volume flux and APEC + LxF interface penalization) runs stably both with and without entropy correction, and Figure~\ref{fig:transcritical_shock} compares the density and pressure profiles for degree $N=3$ and $N=7$ approximations at the final time. We observe that entropy correction slightly reduces oscillations near the shock, contact discontinuity, and start of the rarefaction wave; however, the inclusion of entropy correction does not appear to make a significant difference for this formulation. 

We also note that the behavior of the entropy correction depends strongly on the choice of volume flux. Figure~\ref{fig:compare_blending_coeffs} shows the density overlaid with the entropy correction blending coefficient $\theta$ for different choices of volume flux. The fluxes considered are the central flux, the APEC flux \eqref{eq:APEC_KEEP}, and a modification of the APEC flux which uses only the new internal energy average of \eqref{eq:APEC_KEEP}:
\begin{equation}
\label{eq:APEC_central}
\bm{f}_{\rm APEC-C}(\bm{u}_L, \bm{u}_R) = \begin{bmatrix}
\avg{\rho v_1}\\
\avg{\rho v_1^2 + p}\\
\avg{\LRp{\frac{1}{2}\rho v_1^2 + p}v_1} + (\rho e)_{\rm avg}\avg{v_1}
\end{bmatrix}.
\end{equation}
This modification is introduced to emphasize that the choice of internal energy average $(\rho e)_{\rm avg}$ is primarily responsible for reducing pressure equilibrium errors. Because this is a combination of the central flux (e.g., arithmetic averages of the mass, momentum, and energy fluxes) and the APEC flux \eqref{eq:APEC_KEEP}, we refer to \eqref{eq:APEC_central} as APEC-C. 

\begin{figure}
\centering
\subfloat[Central]{\includegraphics[width=0.33\textwidth, trim={0 0cm 0 3cm}, clip]{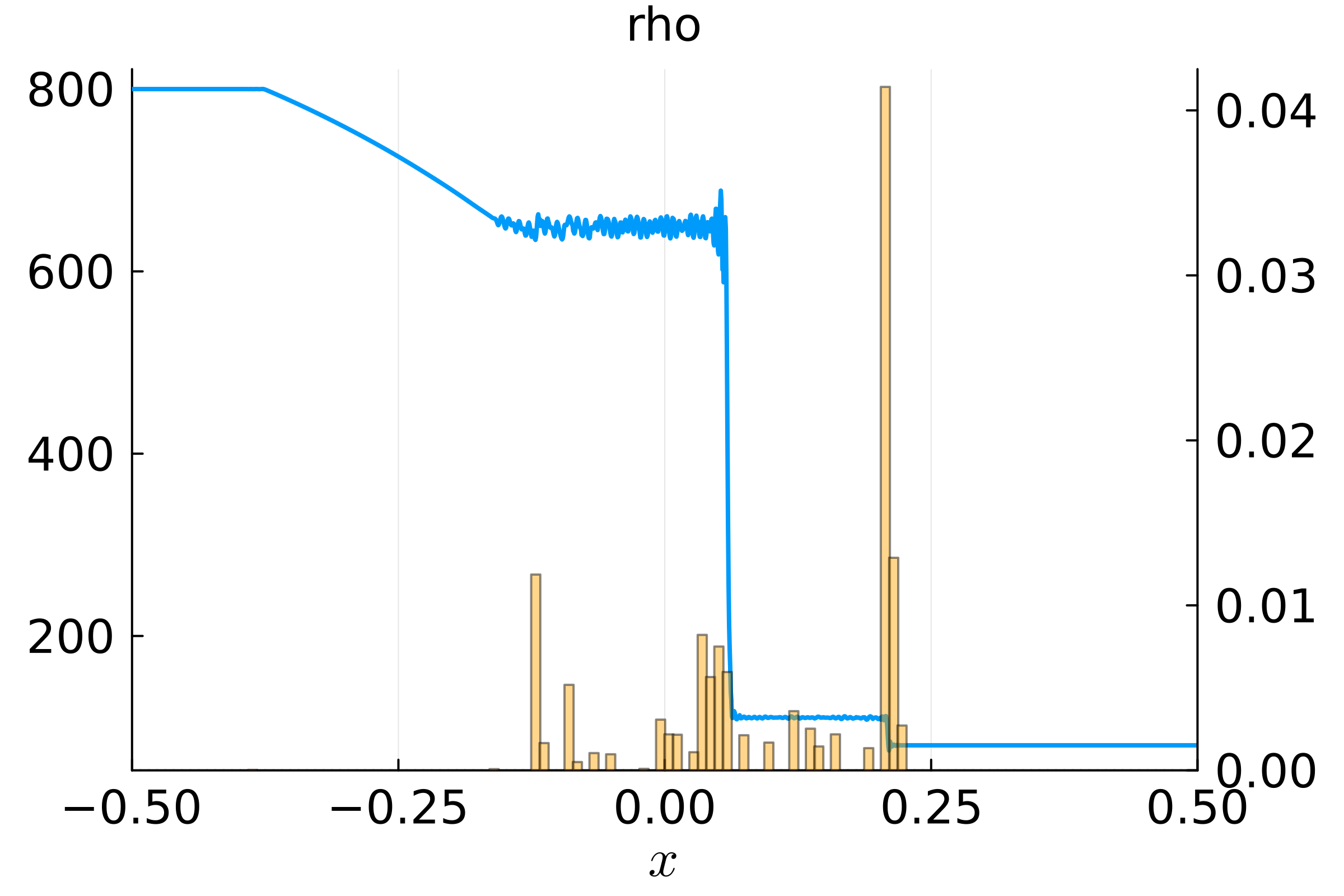}}
\subfloat[APEC-C]{\includegraphics[width=0.33\textwidth, trim={0 0cm 0 3cm}, clip]{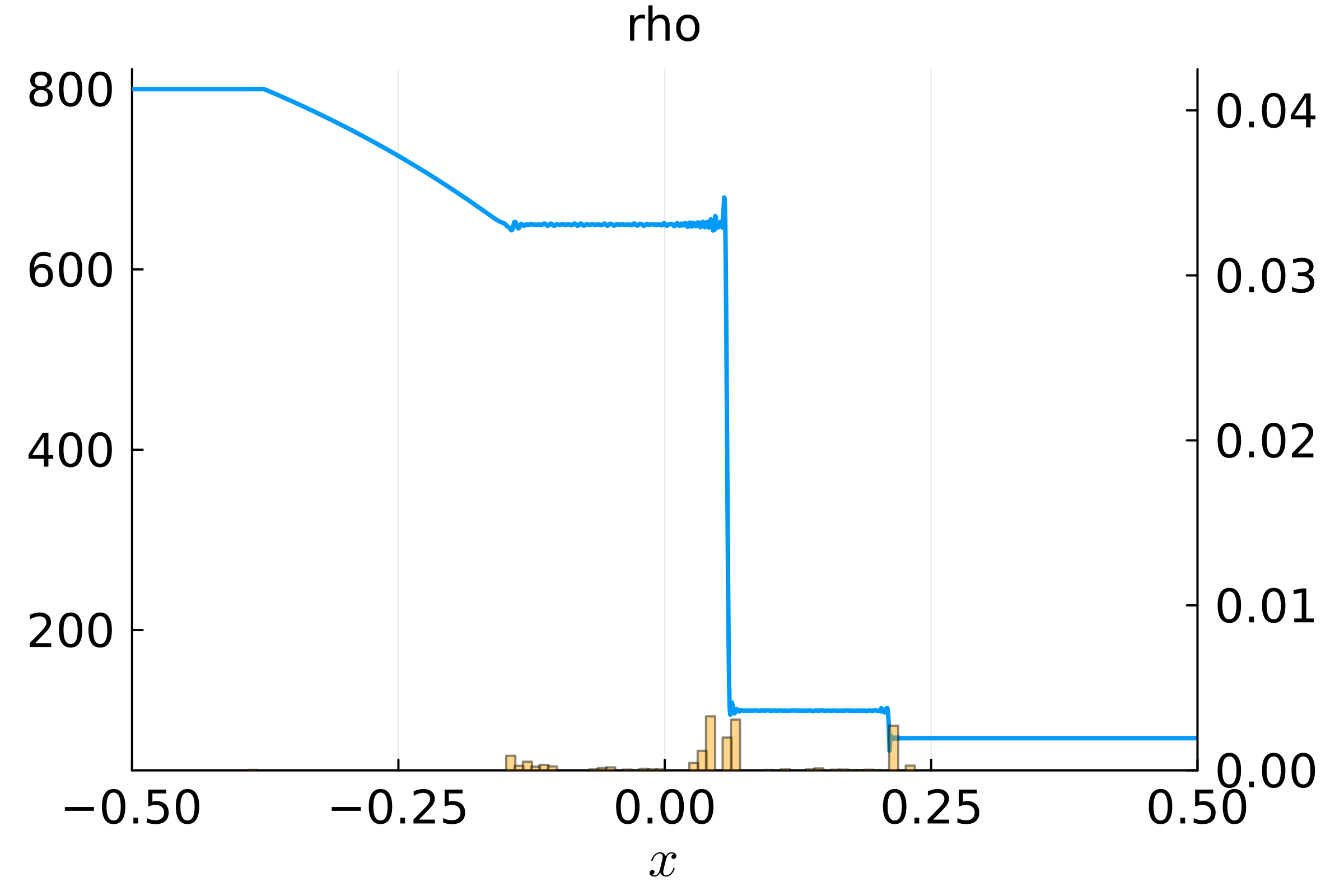}}
\subfloat[APEC]{\includegraphics[width=0.33\textwidth, trim={0 0cm 0 3cm}, clip]{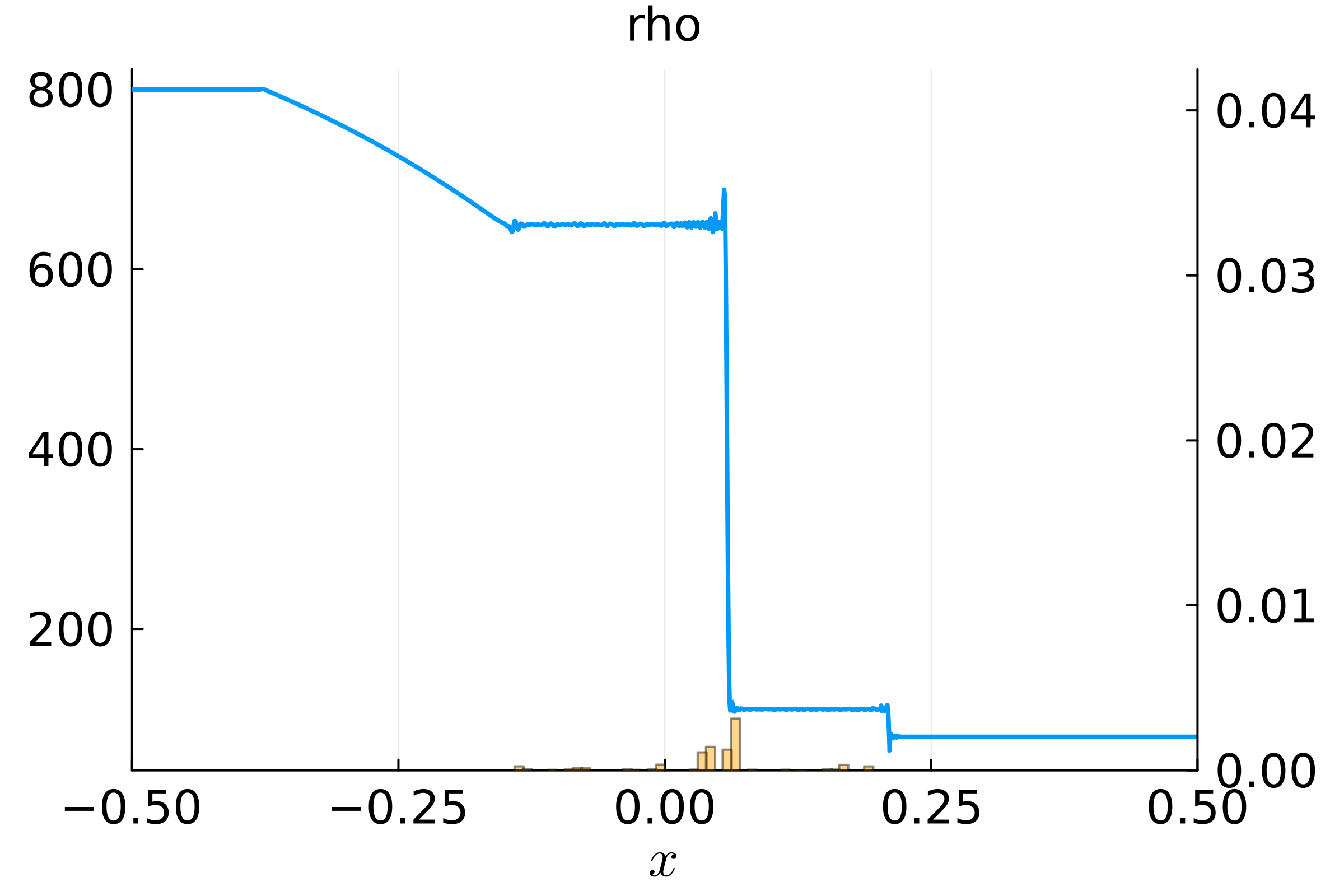}}
\caption{Density profile and entropy correction blending coefficient $\theta$ for degree $N=3$ and $128$ elements and various choices of volume flux in \eqref{eq:dgform}. The left $y$-axis corresponds to the density profile, while the right $y$-axis  corresponds to the magnitude of $\theta$.}
\label{fig:compare_blending_coeffs}
\end{figure}

First, we observe in Figure~\ref{fig:compare_blending_coeffs} that the use of a central volume flux leads to spurious oscillations in the region between the start of the rarefaction wave and the contact discontinuity; in contrast, these oscillations are significantly reduced using APEC or APEC-C volume fluxes. Second, we observe that the distribution and magnitude of the entropy correction blending coefficient $\theta$ depends strongly on the choice of volume flux. For a central volume flux, the entropy correction blending coefficient is most active near the shock and less active around the spurious oscillations behind the contact discontinuity. Moreover, we observe that, for a central volume flux, the transcritical shock tube becomes unstable and diverges without entropy correction. 

For an APEC-C volume flux, spurious pressure oscillations are not observed, and the blending near the shock and around the oscillations near the contact discontinuity are closer in magnitude. For the kinetic-energy preserving APEC volume flux, the blending coefficient becomes smaller at the shock compared with oscillations near the contact discontinuity, possibly due to the anti-aliasing and stabilizing effects of split-form approximations \cite{gassner2016split, winters2018comparative,rojas2021robustness,sjogreen2018high}.

\subsection{Two-dimensional transcritical mixing layer}
\label{sec:pr_transcritical_mixing}

We now consider a 2D transcritical mixing layer problem from \cite{bernades2023kinetic} using the Peng-Robinson EOS. The computational domain is $\LRs{-0.5,\,0.5} \times \LRs{-0.25,\,0.25}$, with periodic boundary conditions imposed in the $x$-direction and slip-wall boundary conditions imposed at $y = \pm 0.25$. The initial pressure is constant and given by $p = 2 p_c = 6.8 \times 10^6\,\mathrm{Pa}$. The initial temperature and velocity fields are given by
\begin{gather*}
T(x,y) = T_c \LRp{3A - A \tanh\LRp{\frac{y}{\delta}}}, \\
v_1(x,y) = v_{1,0} \LRp{1 + 0.2 \tanh\LRp{\frac{y}{\delta}}} + \Delta v(x,y),
\qquad
v_2(x,y) = \Delta v(x,y),
\end{gather*}
where $A = \frac{3}{8}$, $\delta = \frac{1}{20}$, $v_{1,0} = 25\,\mathrm{m\,s^{-1}}$, and $\Delta v(x, y)$ is a spanwise perturbation
\[
\Delta v(x,y)
= \epsilon \sin\LRp{\pi k x}
\frac{\tanh\LRp{100\LRp{y + 0.1}} - \tanh\LRp{100\LRp{y - 0.1}}}{2},
\qquad
\epsilon = 1, \qquad k = 6.
\]
The specific volume $V$ is calculated by solving $p\LRp{V,T} = 2p_c$, and the simulation is run until final time  $2 t_c$, where $t_c = 0.033$ \cite{bernades2023kinetic}. 

\begin{figure}
\centering
\subfloat[$\rho / \rho_c$]{\includegraphics[width=0.49\textwidth, trim={0 8cm 0 8cm}, clip]{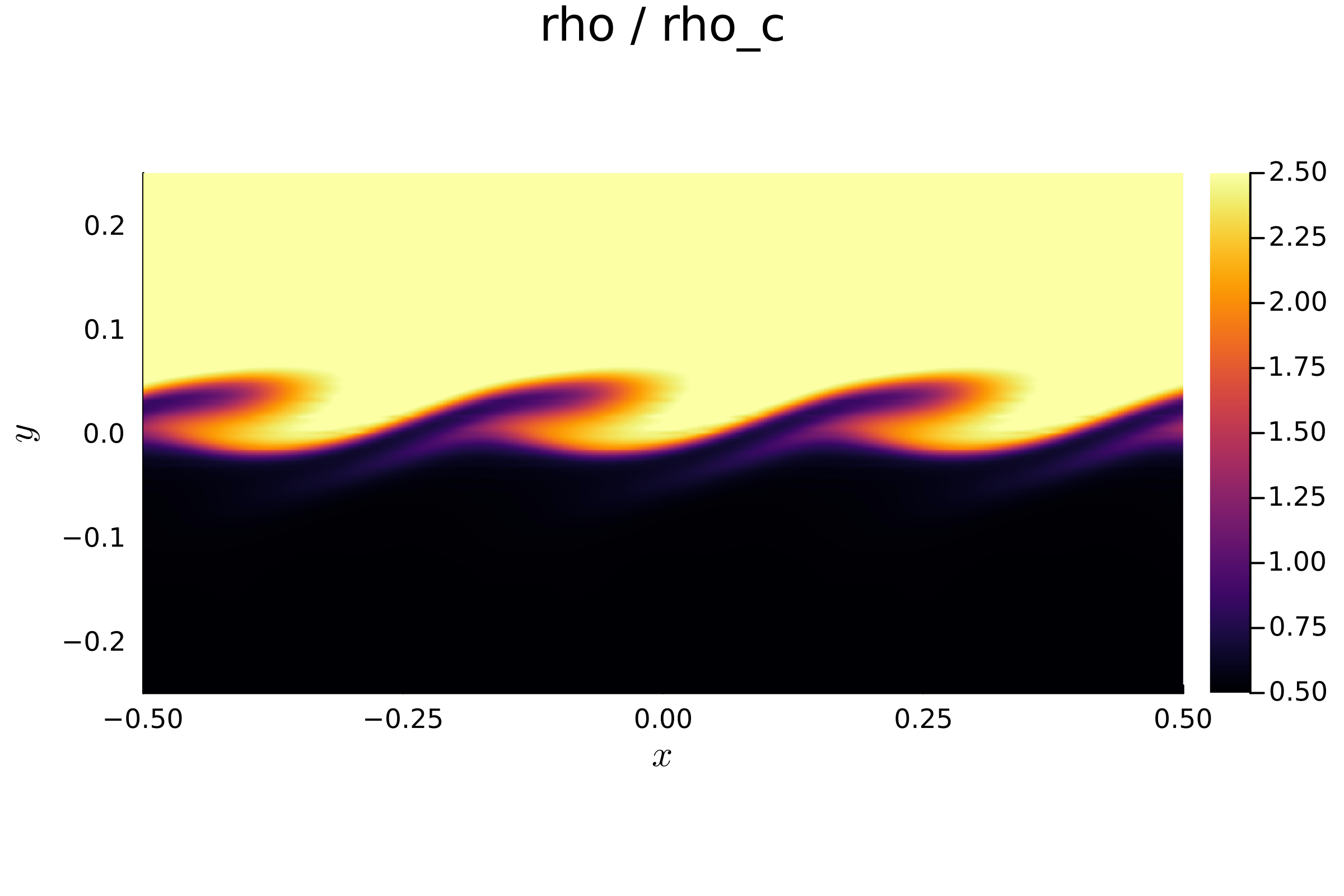}}
\subfloat[$\gamma$]{\includegraphics[width=0.49\textwidth, trim={0 8cm 0 8cm}, clip]{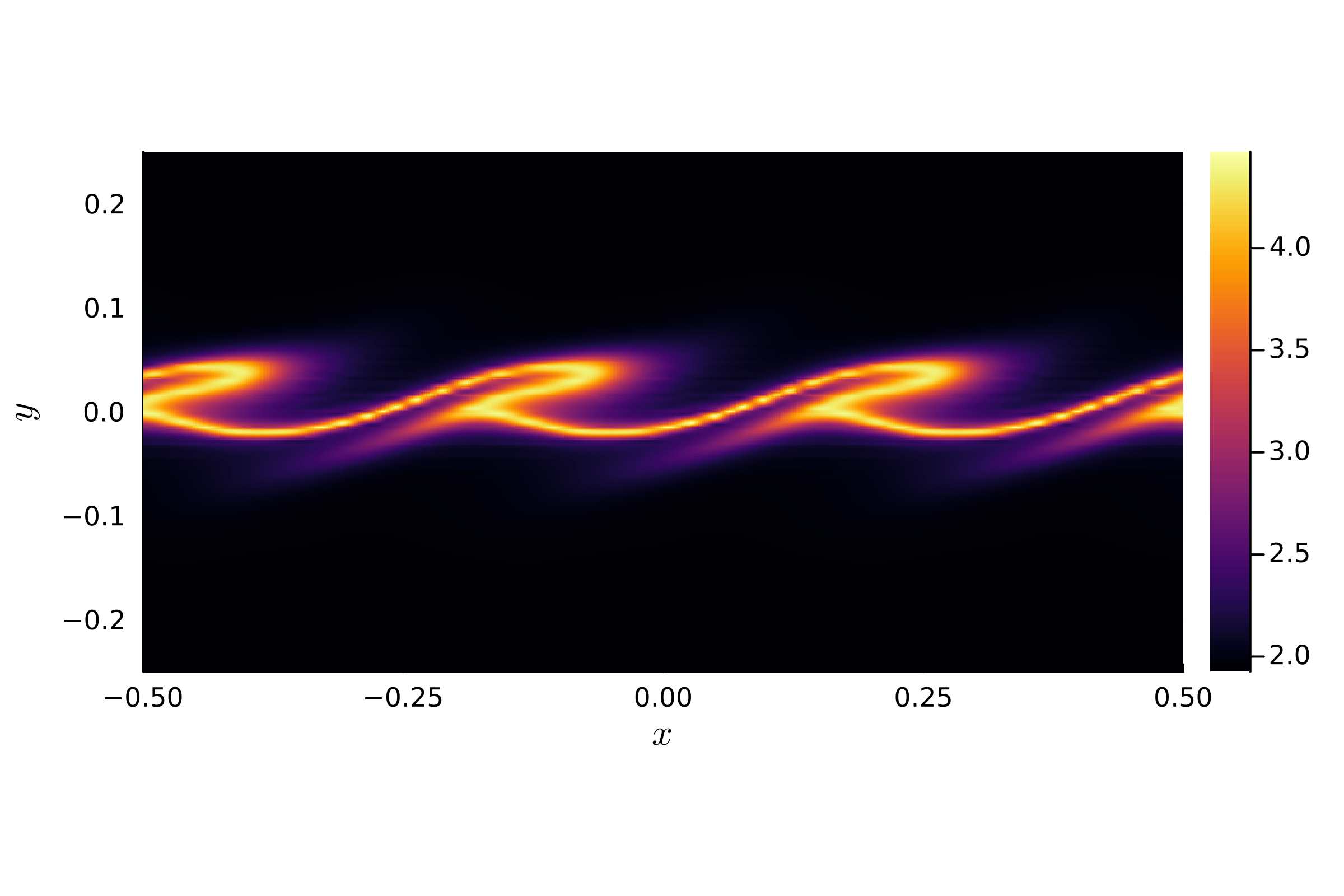}}
\caption{Relative density $\rho / \rho_c$ and adiabatic index $\gamma = c_p / c_v$ at final time $2t_c$ using degree $N=3$ polynomials, APEC flux differencing, and entropy correction. }
\label{fig:2t_c}
\end{figure}

We consider a structured mesh with $64 \times 32$ elements and degree $N=3$ polynomials, as well as a $32 \times 16$ mesh with degree $N=7$ polynomials, such that the total number of degrees of freedom is identical between both meshes. CFL values of $0.9$ and $0.6$ were used for $N=3$ and $N=7$, respectively. Figure~\ref{fig:2t_c} plots the density normalized by the critical specific volume $V_c = \rho_c^{-1} \approx 0.00338 \mathrm{\,m^{3}\, kg^{-1}}$. For these cases, APEC flux differencing with and without entropy correction are virtually identical. This is further supported by the small value of the entropy correction blending coefficient $\theta$. For $N=3$ on a $64\times 32$ mesh, the largest value of $\theta$ at final time $2t_c$ is $\theta \approx 1.194\times 10^{-4}$; for $N=7$ on a $32\times 16$ mesh, the largest value of $\theta$ is $\theta \approx 5.843\times 10^{-5}$. 

\begin{figure}
\centering
\subfloat[$\rho / \rho_c$]{\includegraphics[width=0.49\textwidth, trim={0 8cm 0 8cm}, clip]{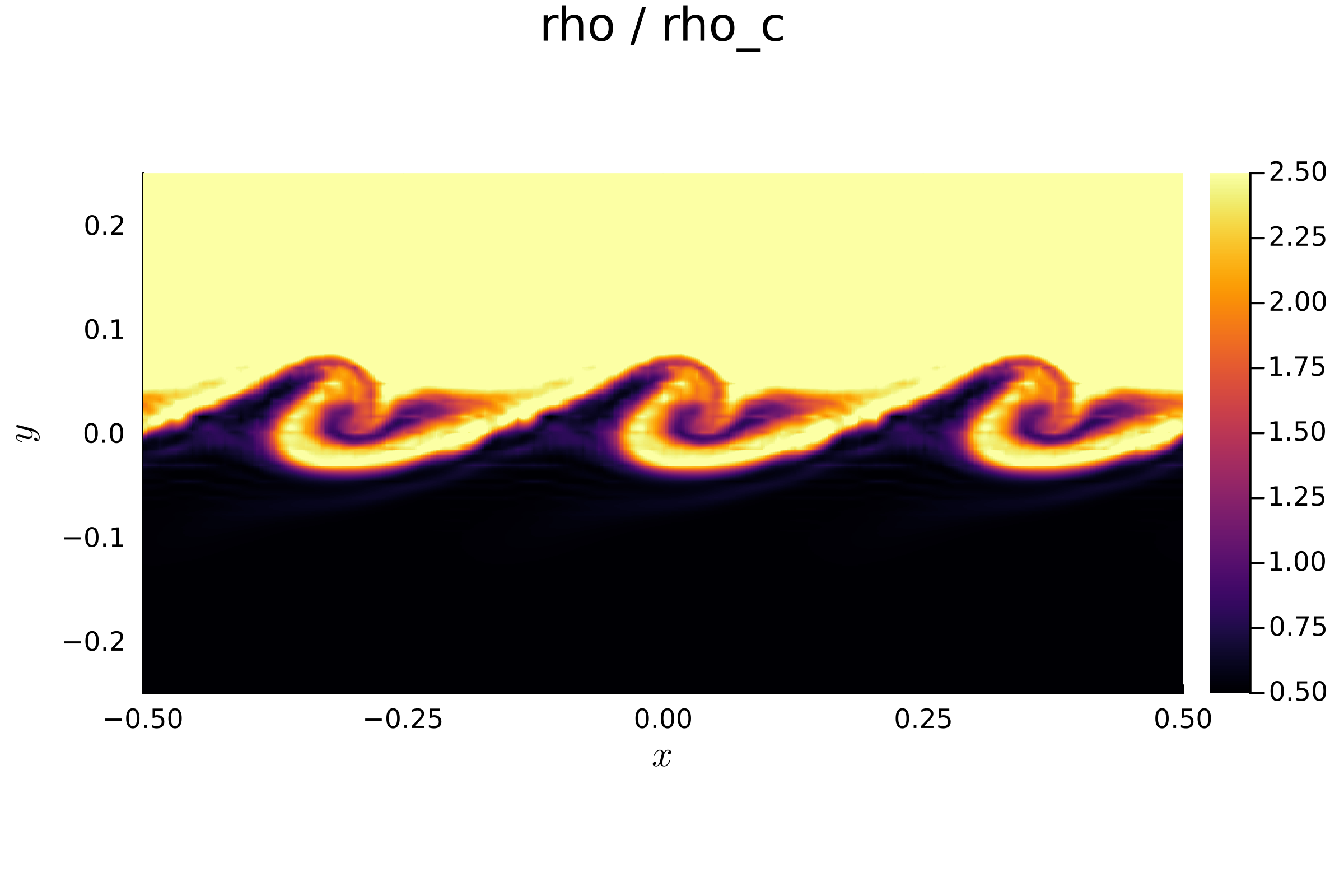}}
\subfloat[$\gamma$]{\includegraphics[width=0.49\textwidth, trim={0 8cm 0 8cm}, clip]{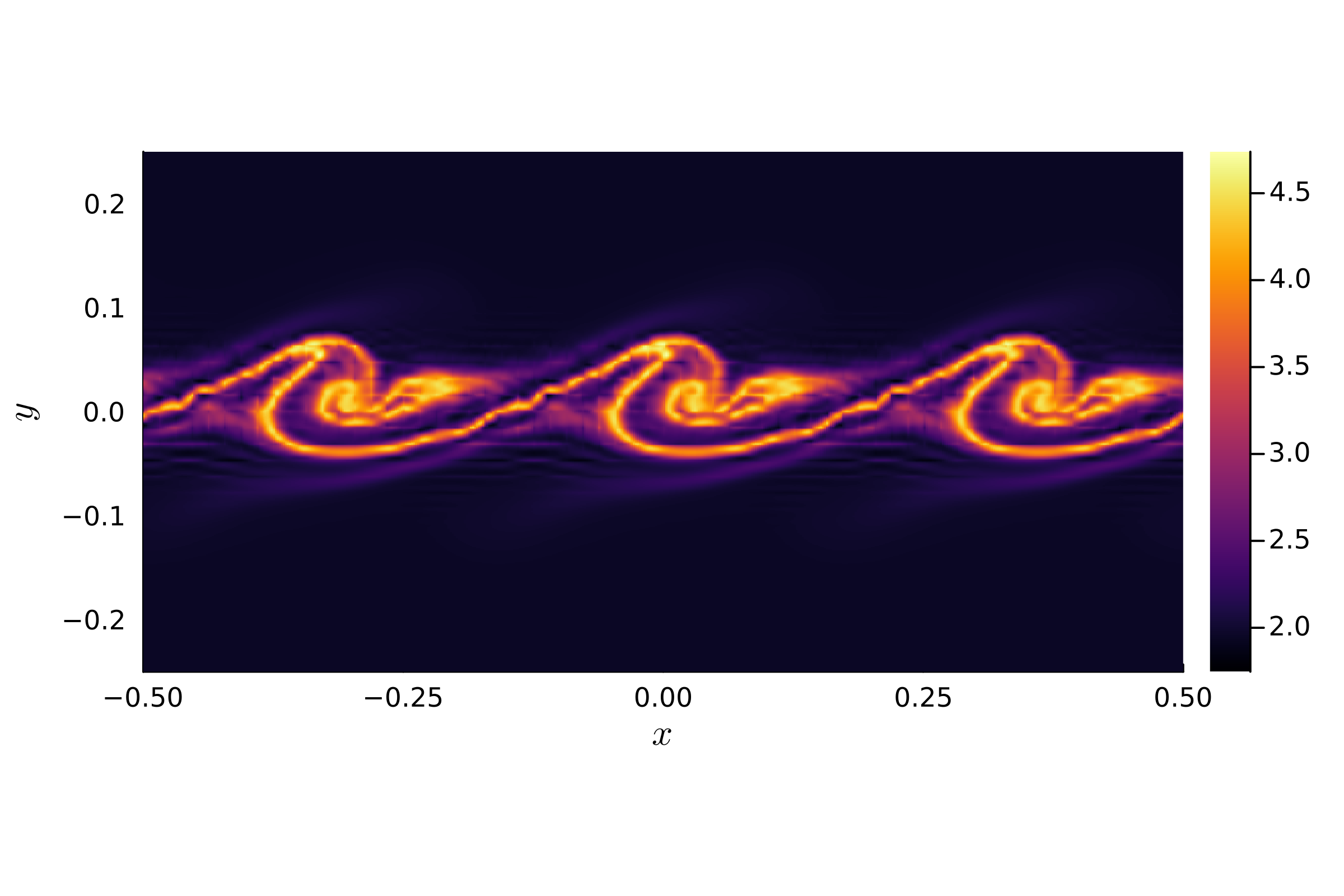}}
\caption{Relative density $\rho / \rho_c$ and adiabatic index $\gamma = c_p / c_v$ at final time $4t_c$ using degree $N=3$ polynomials, APEC flux differencing, and entropy correction. }
\label{fig:4t_c}
\end{figure}

Additionally, to highlight the effect of the entropy correction, we ran the transcritical mixing problem to a longer final time $4 t_c = 0.132$. Without entropy correction, APEC flux differencing with both $N=3$ or $N=7$ did not finish and diverged around time $t \approx 0.11$. Figure~\ref{fig:4t_c} shows the adiabatic index $\gamma = c_p / c_v$ at final time $4 t_c$ for APEC flux differencing with entropy correction. The solution remains stable despite the density showing visible artifacts due to under-resolution.

\subsection{Two-dimensional inviscid transcritical jet injection}
\label{sec:transcritical_jet}

We conclude with an inviscid transcritical jet injection example adapted from \cite{foll2019use} and \cite{ma2017entropy}. Let $h = 2.2 \times 10^{-3}\,\mathrm{m}$ denote the nozzle width. The domain is a rectangular channel given by
\[
\Omega = \LRs{0,\,32h} \times \LRs{-8h,\,8h}
= \LRs{0,\,7.04 \times 10^{-2}} \times \LRs{-1.76 \times 10^{-2},\,1.76 \times 10^{-2}}\,\mathrm{m}.
\]
Periodic boundary conditions are imposed in the $y$-direction, and slip-wall (mirror) boundary conditions are imposed on the right boundary. The initial condition is a quiescent gas 
\[
\rho = 45\,\mathrm{kg\,m^{-3}}, \qquad v_1 = 0, \qquad v_2 = 0, \qquad T = 290.2\,\mathrm{K}.
\]
At the left boundary $x=0$, the jet inflow is prescribed as
\[
\LRp{\rho, v_1, v_2, T} =
\begin{cases}
\LRp{500\,\mathrm{kg\,m^{-3}},\,100\,\mathrm{m\,s^{-1}},\, 10^{-4}\,\mathrm{m\,s^{-1}},\,124.6\,\mathrm{K}}
& \LRb{y} \leq h/2, \\[4pt]
\LRp{45\,\mathrm{kg\,m^{-3}},\,0,\,0,\,290.2\,\mathrm{K}}
& \text{otherwise}.
\end{cases}
\]
where the small $y$-velocity perturbation is arbitrarily chosen to break symmetry. 
The simulation is run to final time $1 \times 10^{-3}\,\mathrm{s}$.


\begin{figure}
\centering
\subfloat[Density $\rho$]{\includegraphics[width=.5\textwidth, trim={3cm 0 3cm 3cm}, clip]{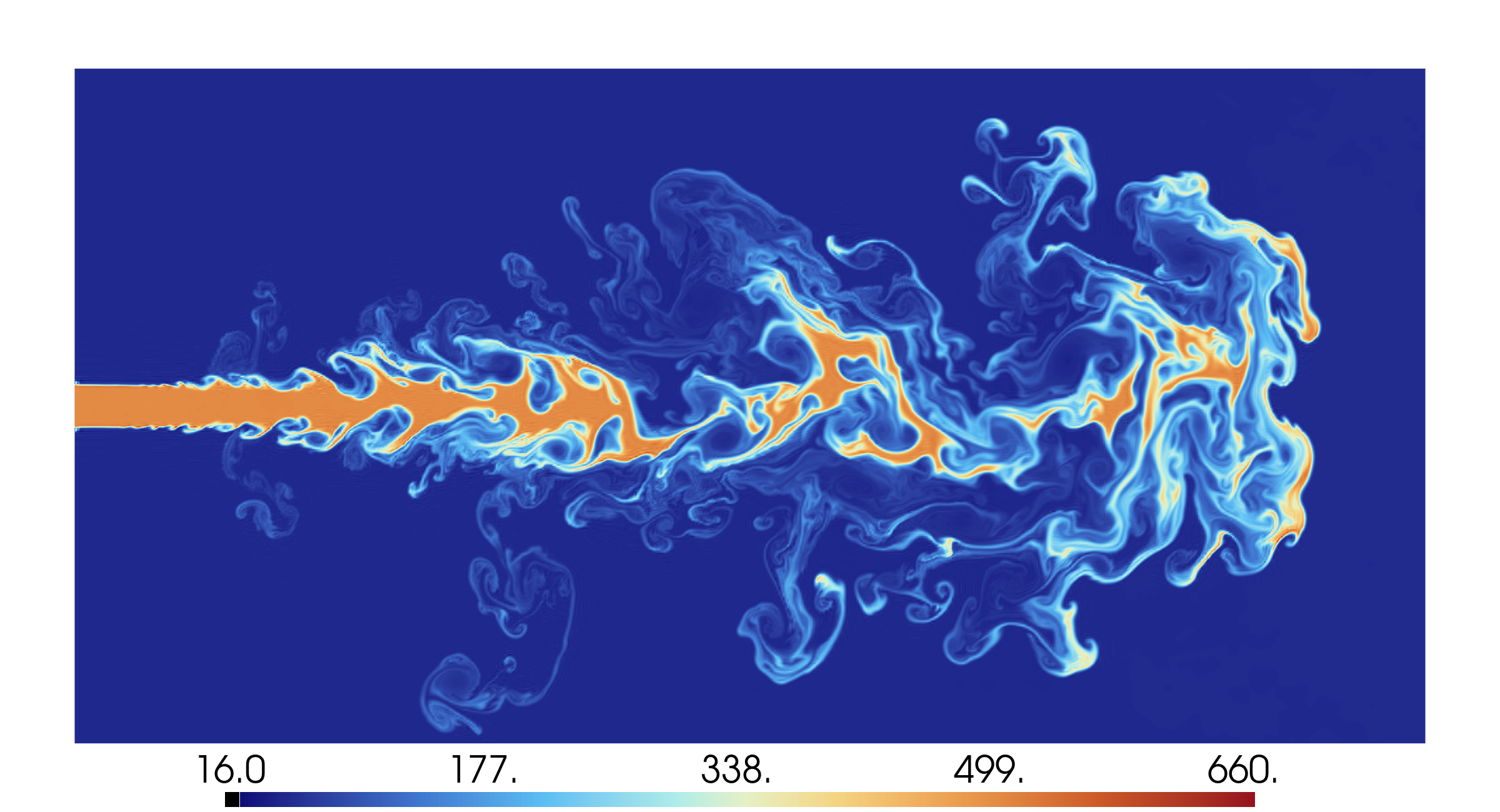}}
\subfloat[Pressure $p$]{\includegraphics[width=.5\textwidth, trim={3cm 0 3cm 3cm}, clip]{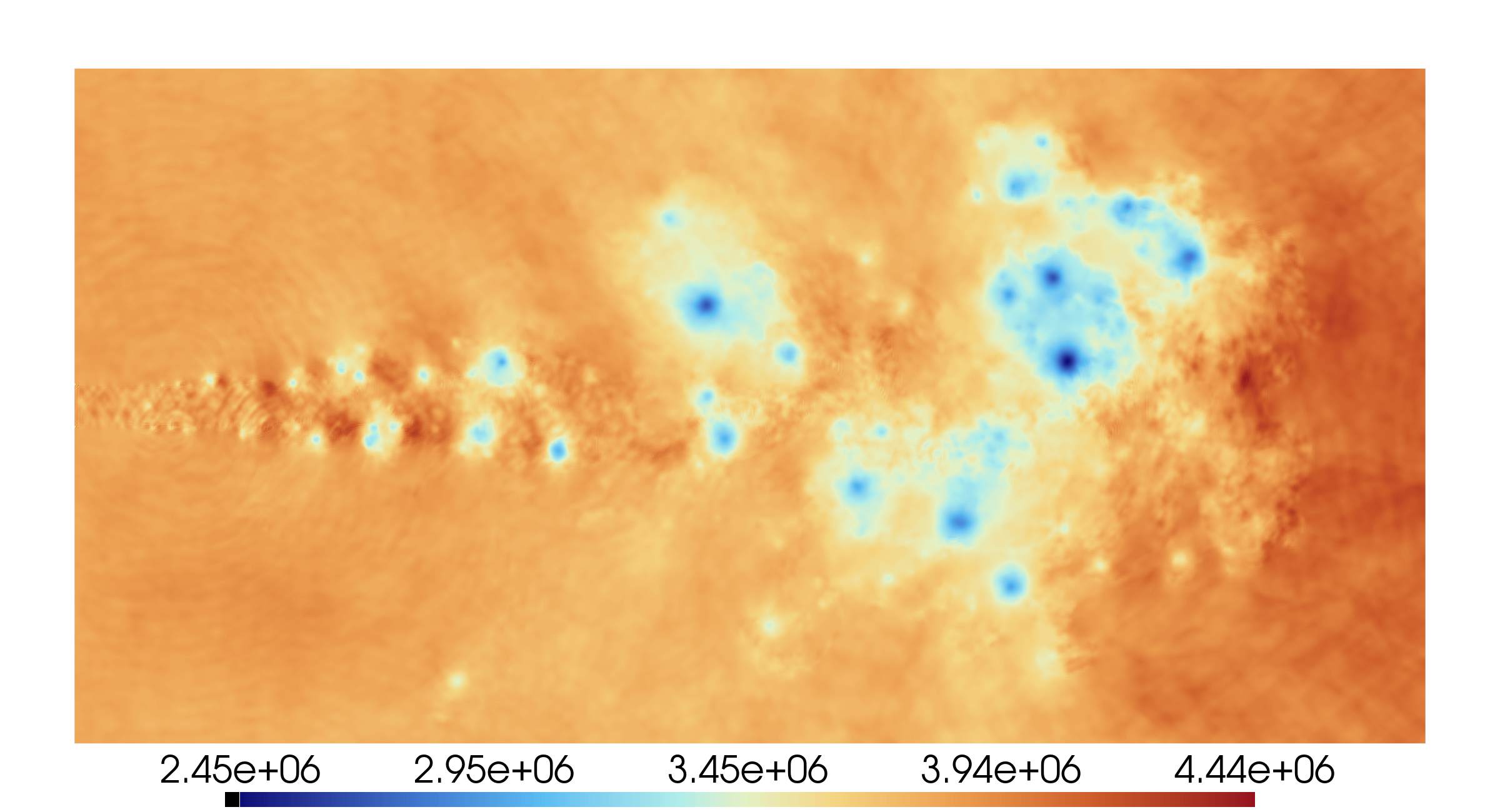}}\\
\subfloat[Adiabatic index $\gamma$]{\includegraphics[width=.5\textwidth, trim={3cm 0 3cm 3cm}, clip]{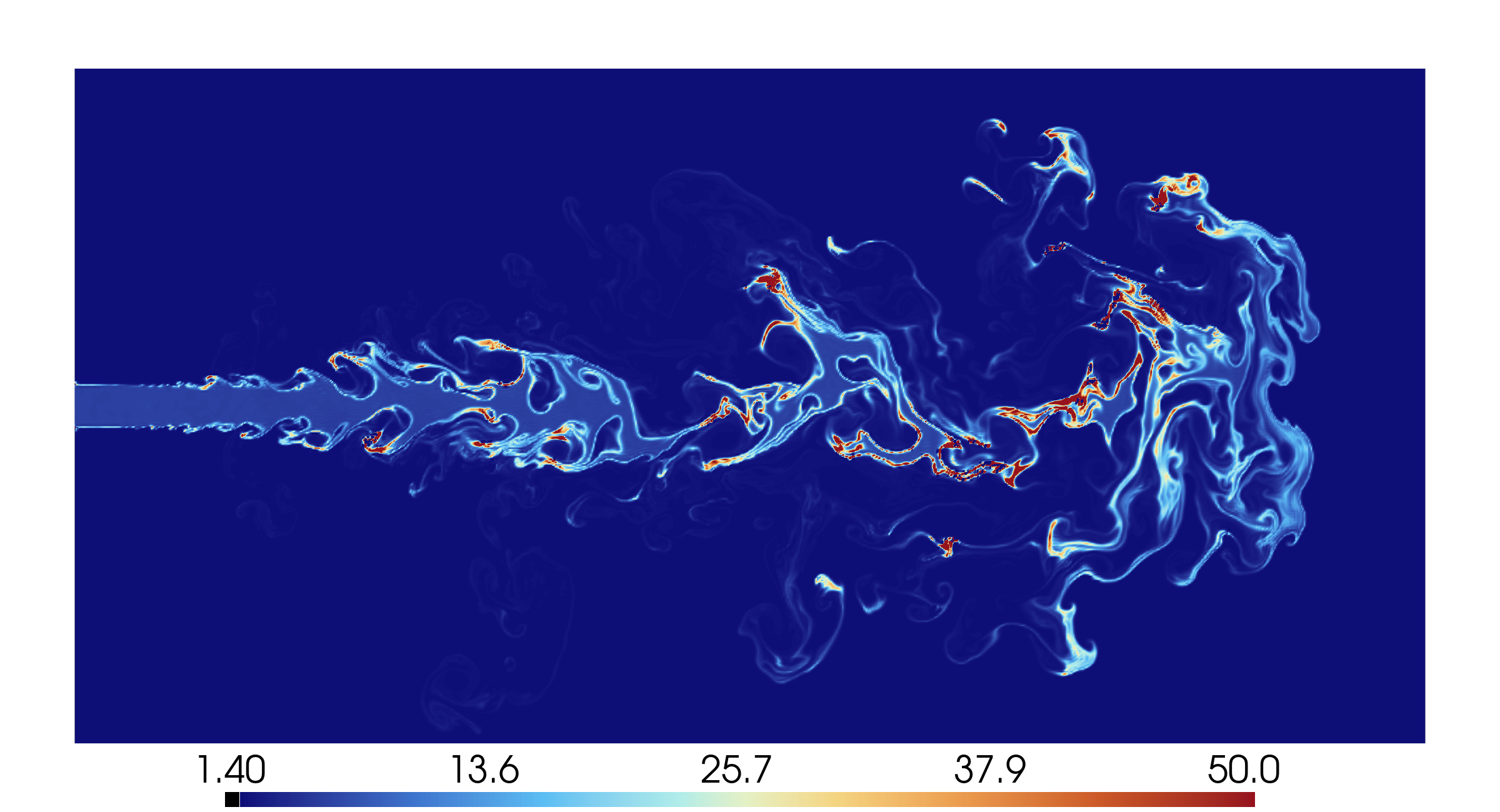}}
\subfloat[Blending coefficient $\theta$]{\includegraphics[width=.5\textwidth, trim={3cm 0 3cm 3cm}, clip]{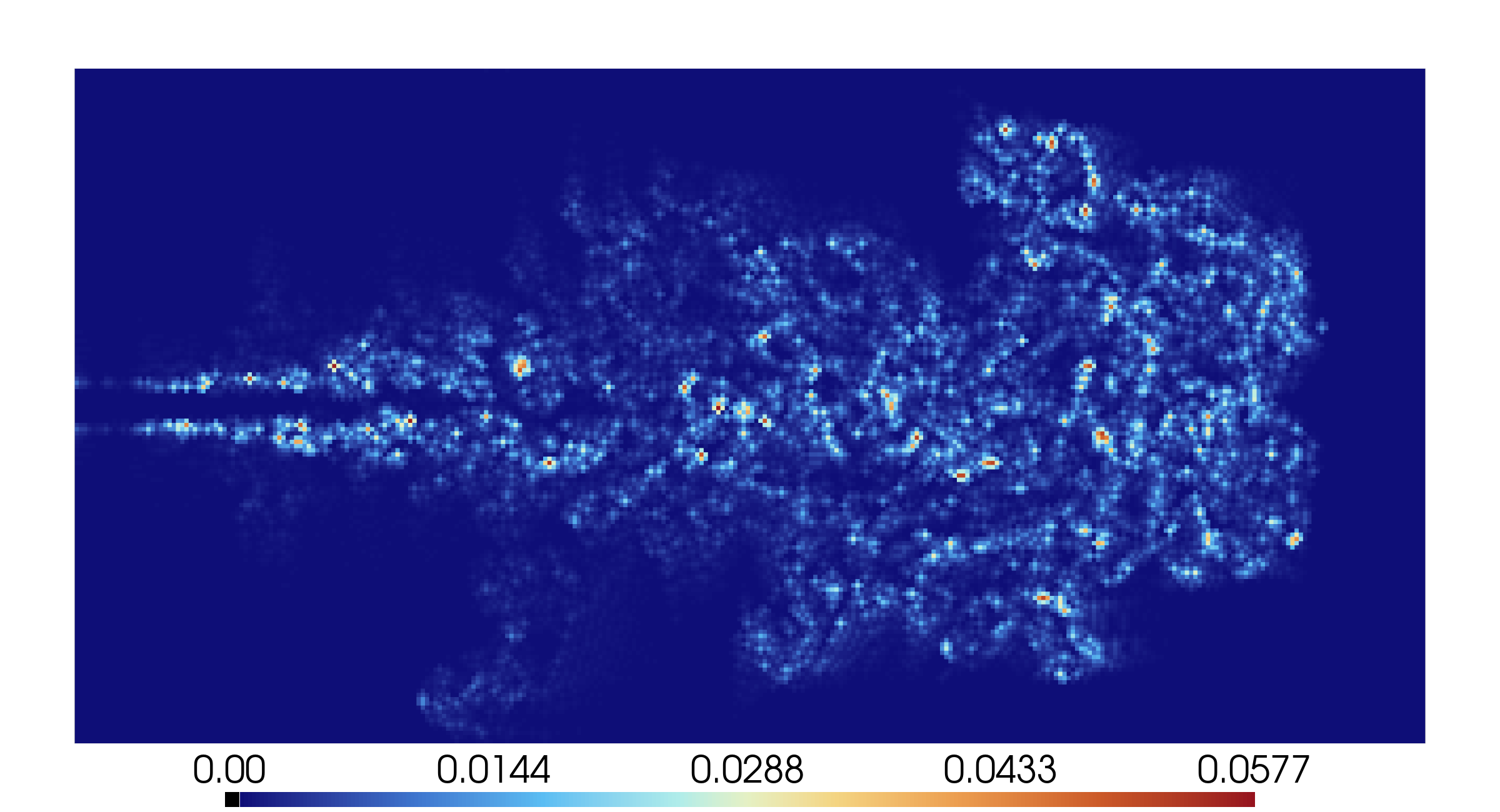}}
\caption{Density, pressure, ratio of specific heats $\gamma$, and entropy correction blending coefficient for the transcritical jet at final time $10^{-3}$ on a $320\times 160$ mesh with degree $N=3$ polynomials.}
\label{fig:jet}
\end{figure}

We note that for this problem, entropy correction alone diverged with negative density and temperature. However, we observe that this is remedied by ``smoothing'' the entropy correction factors as done in \cite{hennemann2021provably}. This smoothing step postprocesses the blending coefficient so that on each element $D^k$, $\theta$ is at least half of the value of the blending coefficient on all neighbors across element faces, which we denote by $N(k)$:
\[
\theta \longleftarrow \max_{j \in N(k)} \LRp{\theta, \frac{1}{2}\theta_j}.
\]
Finally, we arbitrarily scale the entropy correction factor $\theta$ by a factor of 1.5. While the simulation runs without this modification, we observe that scaling the correction factor $\theta \longleftarrow \min(1, 1.5 \theta)$ results in a $5-10\times$ larger time-step size. This scaling and smoothing likely provides a weak shock capturing effect which avoids near-inadmissible solution states; future work will investigate non-heuristic approaches for preserving admissibility. 

Figure~\ref{fig:jet} shows the density, pressure, ratio of specific heats $\gamma$, and blending coefficient $\theta$ at the final time under a CFL of $0.8$. A uniform quadrilateral mesh of $320 \times 160$ elements and degree $N=3$ polynomials is used, which corresponds to roughly $0.8$ million degrees of freedom and is comparable to the resolution used for the jet injection problem in \cite{foll2019use}. We observe that $\gamma$ deviates strongly from the ideal gas value of $1.4$, and that the entropy correction blending coefficient $\theta$ is both relatively small in magnitude and activated sparsely. 

\begin{figure}
\centering
\subfloat[Density $\rho$]{\includegraphics[width=.5\textwidth, trim={3cm 0 3cm 3cm}, clip]{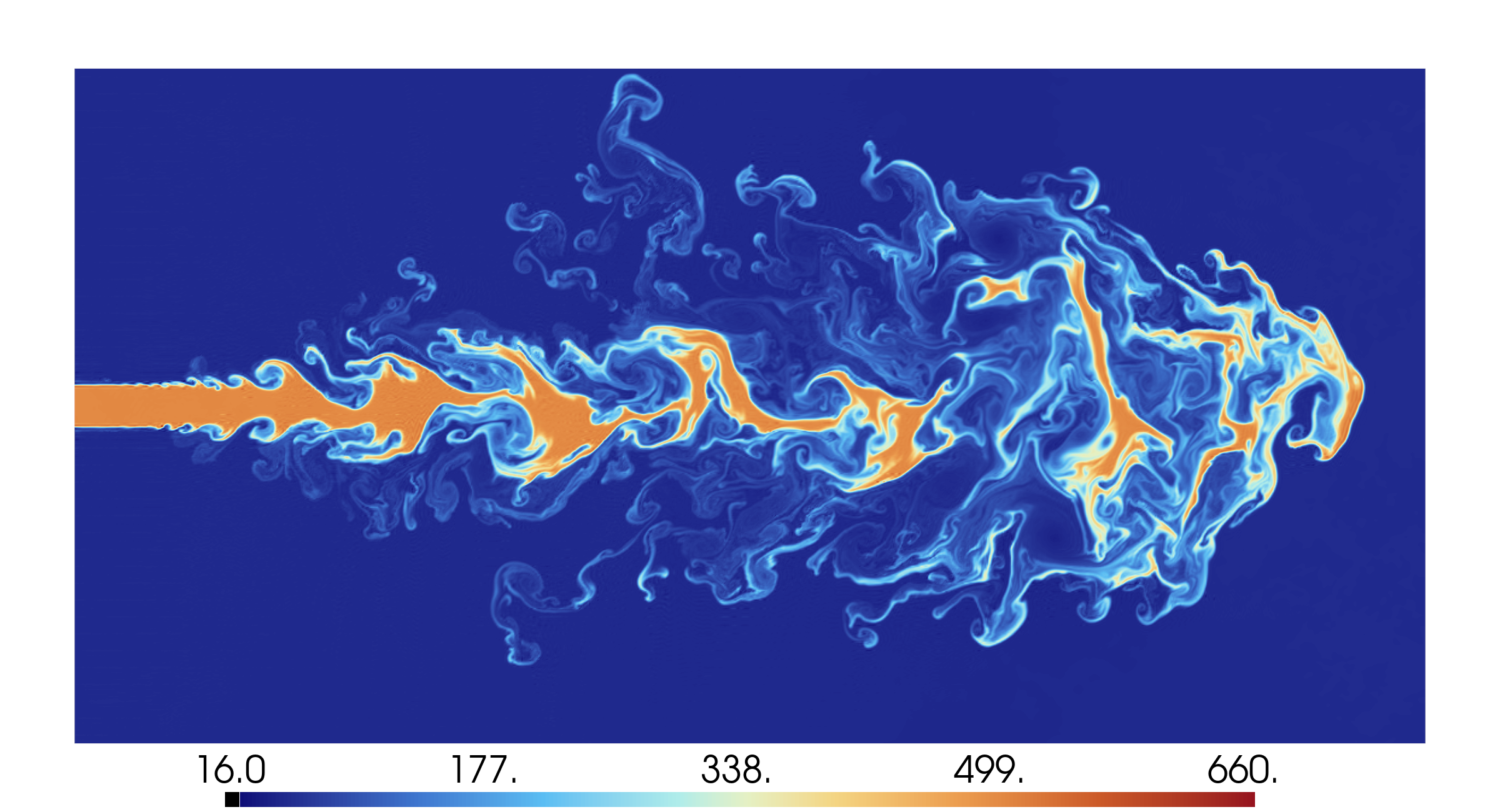}}
\subfloat[Pressure $p$]{\includegraphics[width=.5\textwidth, trim={3cm 0 3cm 3cm}, clip]{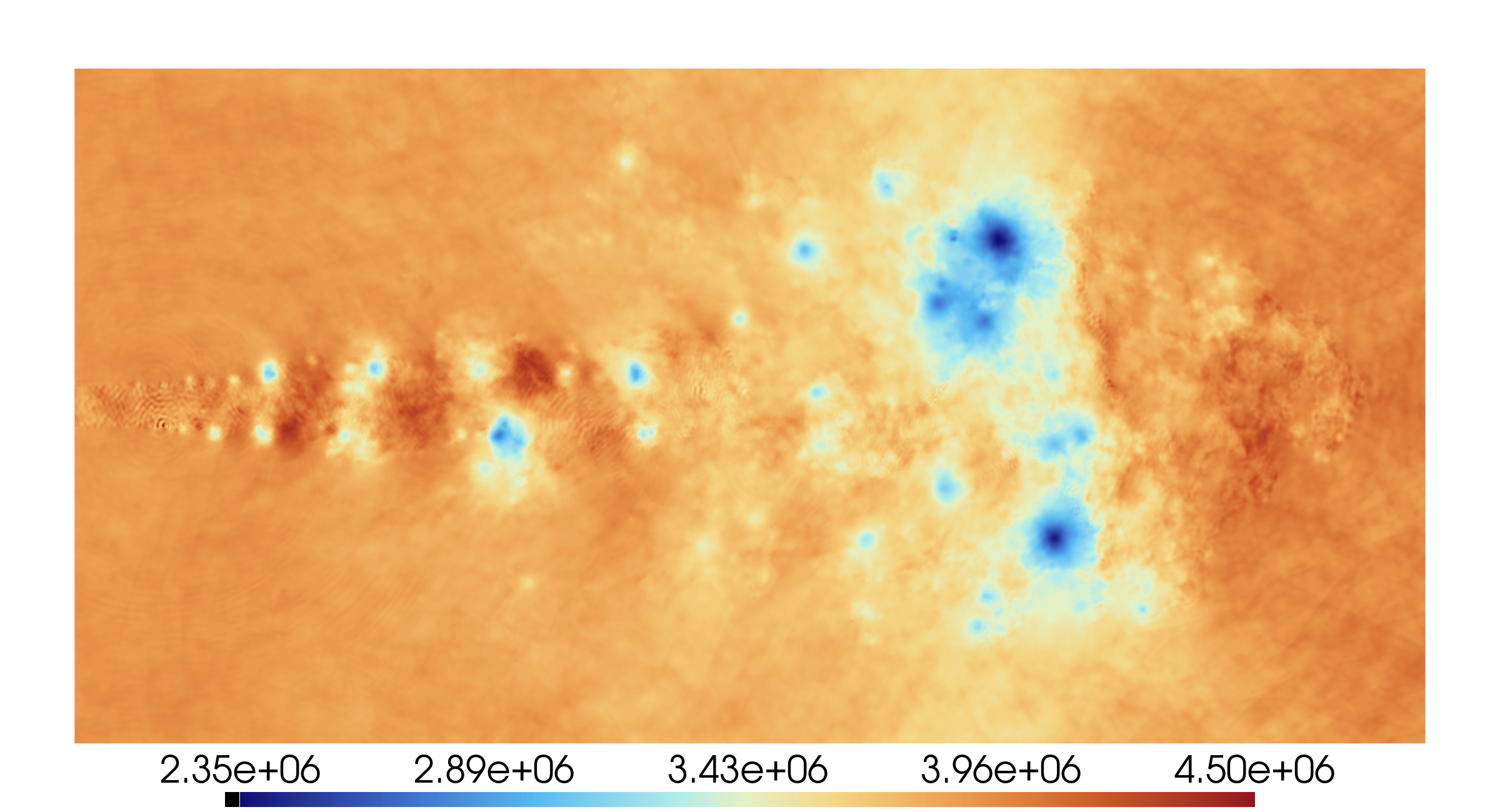}}\\
\subfloat[Adiabatic index $\gamma$]{\includegraphics[width=.5\textwidth, trim={3cm 0 3cm 3cm}, clip]{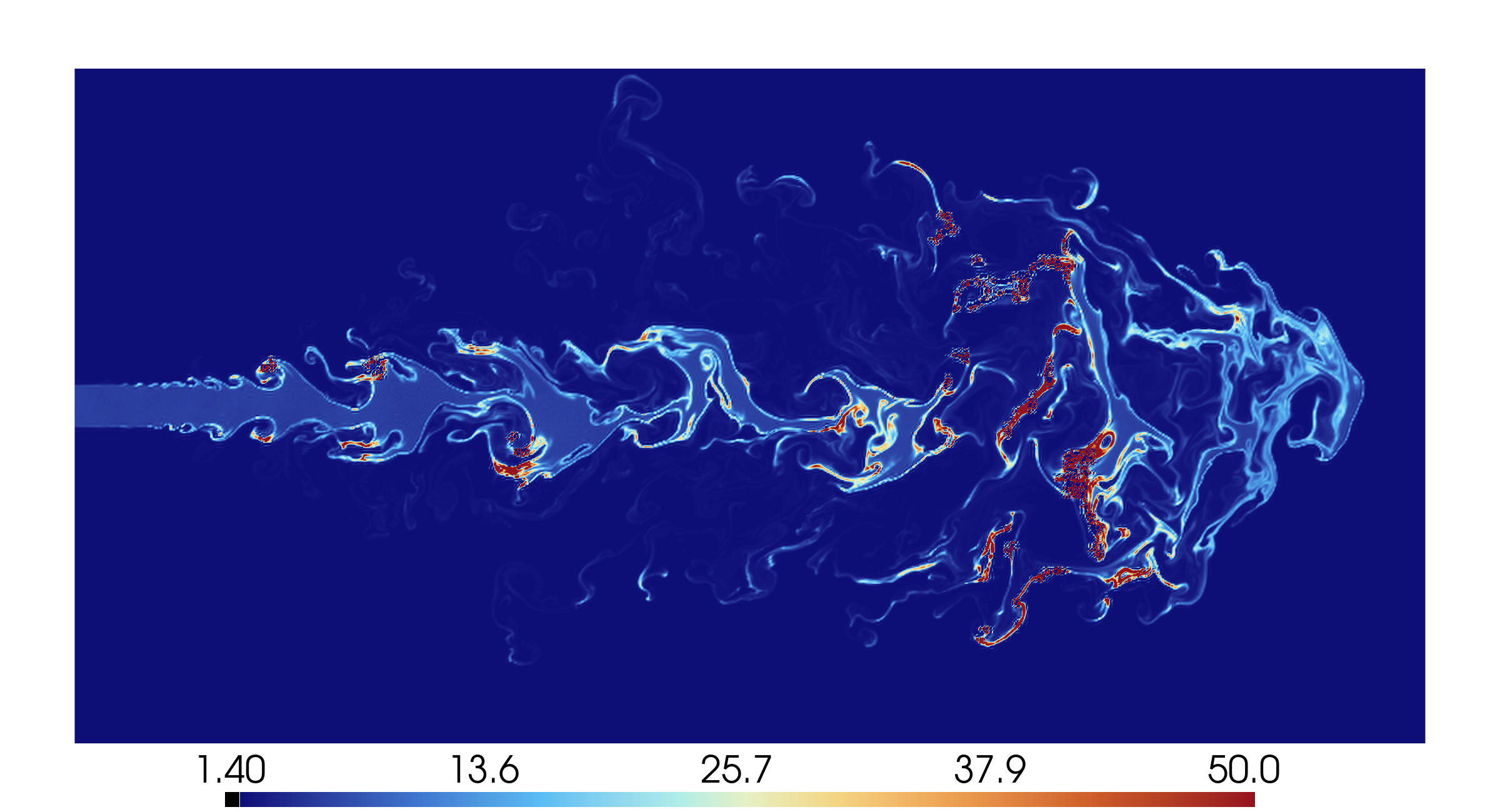}}
\subfloat[Blending coefficient $\theta$]{\includegraphics[width=.5\textwidth, trim={3cm 0 3cm 3cm}, clip]{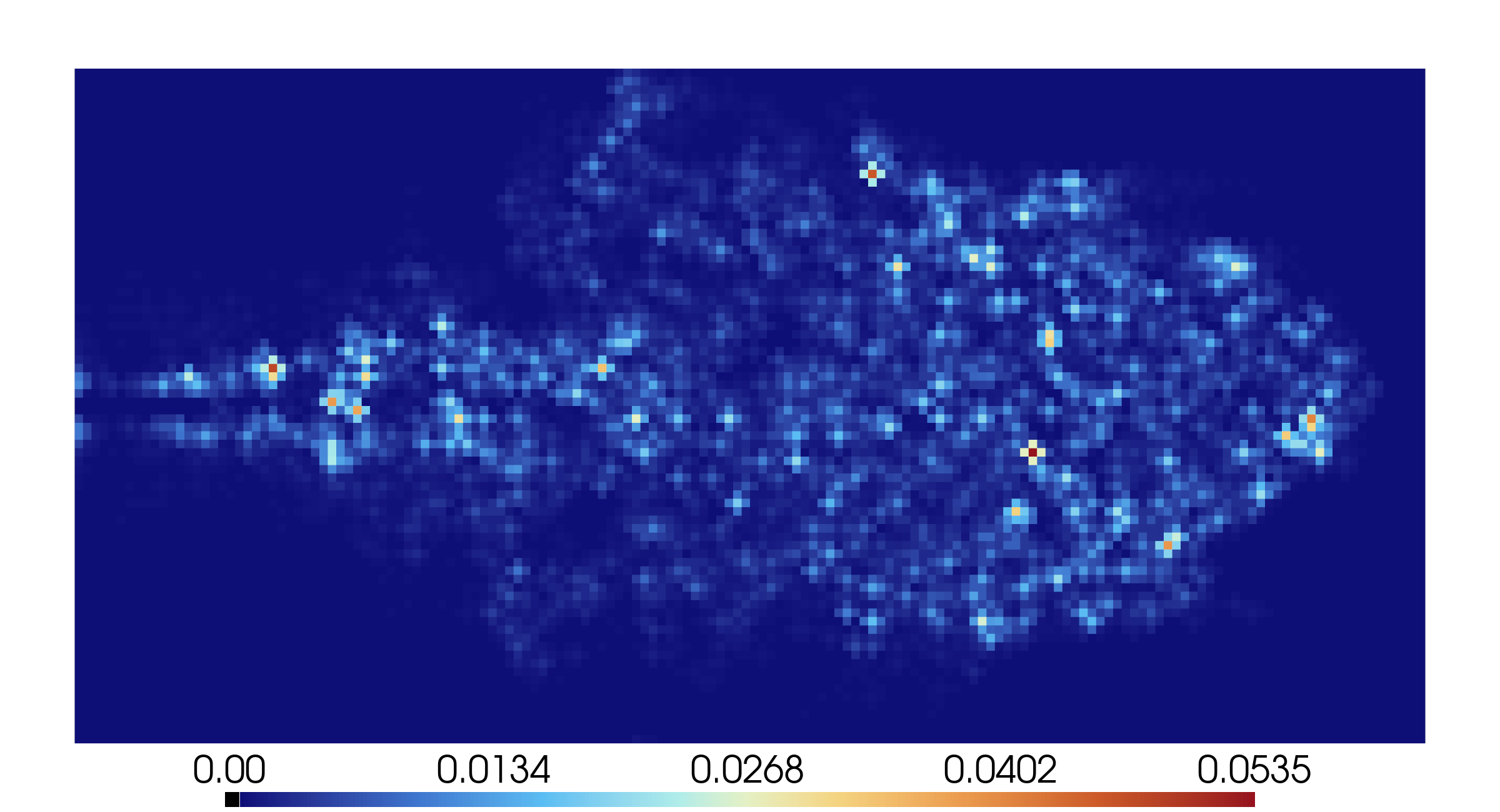}}
\caption{Density, pressure, ratio of specific heats $\gamma$, and entropy correction blending coefficient for the transcritical jet at final time $10^{-3}$ on a $160\times 80$ mesh with degree $N=7$ polynomials.}
\label{fig:jet_polydeg7}
\end{figure}

Finally, to check robustness of this strategy for higher polynomial degrees, Figure~\ref{fig:jet_polydeg7} shows the same quantities for degree $N=7$ polynomials on a coarser mesh of $160\times 80$ elements, such that the number of degrees of freedom is the same as the $N=3$ case.  For visualization purposes, the color scale for density is the same as in Figure~\ref{fig:jet}. The solutions differ due to sensitivity of the Euler equations \cite{fjordholm2017construction}, but similar behavior is observed for both $N=3$ and $N=7$.

\section{Conclusion}

In this work, we investigate structure-preserving techniques for real gas settings. These include exactly and approximately pressure equilibrium preserving discretizations, as well as FCT-like entropy correction techniques applicable to general non-ideal EOS. Pressure equilibrium conserving formulations reduce spurious oscillations and distortions near pressure equilibria, while the enforcement of a cell entropy inequality is especially helpful for improving robustness for long-time simulations, higher polynomial degrees, and under-resolved solution features. 

The combination of these techniques is uniquely well-suited to high order DG discretizations. First, pressure equilibrium preservation can be straightforwardly extended to the DG setting by combining APEC fluxes and flux differencing nodal DG formulations. Second, pressure equilibrium violations due to interface dissipation are small at higher orders of approximation. This enables the stabilization of under-resolved solutions by introducing DG interface penalty terms while preserving advantages of approximate pressure equilibrium conserving formulations. 

Because the entropy correction used in this work is minimally dissipative, it does not introduce large pressure equilibrium errors. However, small pressure equilibrium errors are still introduced through this correction, and future work will explore entropy dissipative corrections designed to preserve pressure equilibria. Finally, we note that to the authors' knowledge, the design of positivity preserving limiters which respect pressure equilibria for non-ideal EOS remains an open problem.

\section{Reproducibility}

A reproducibility repository containing Julia codes used to generate the results presented in this paper is available online \cite{chan2026nodalRepro}.

\section{Acknowledgements}

The authors thank Daniel Doehring for code reviews in Trixi.jl related to this work, as well as Gennaro Coppola, Alessandro Aiello, and Carlo De Michele for helpful discussions. 
JC gratefully acknowledges support from the National Science Foundation under award DMS-2607830. 
RP gratefully acknowledges support from the National Science Foundation under award NSF-GRFP-DGE-2137420. JL acknowledges the support by the Deutsche Forschungsgemeinschaft (DFG) within the Research Training Group GRK 2583 ``Modeling, Simulation and Optimization of Fluid Dynamic Applications." 
HR was supported by the Deutsche Forschungsgemeinschaft
(DFG, German Research Foundation, project number 528753982
as well as within the DFG priority program SPP~2410 with project number 526031774).
EC acknowledges the Office of Naval Research through the Naval Research Laboratory 6.1 Computational Physics Task Area.
AE acknowledges the Air Force Office of Scientific Research (program officers: Drs. Chiping Li, Fariba Farhoo, and Justin Koo) under contract No. 25RQCOR004, as well as Amentum under contract No. FA9300-20-F-9801.

The authors acknowledge the Texas Advanced Computing Center (TACC) at The University of Texas at Austin for providing computational resources that have contributed to the research results reported within this paper. 
Finally, the authors acknowledge the use of AI tools such as Cursor in the data analysis and preparation of figures for this manuscript. 

\appendix

\section{Thermodynamic identities}

The implementations in this work require the following thermodynamic quantities:
\begin{enumerate}
\item Pressure $p(V, T)$ and partial derivatives $\LRl{\pd{p}{V}}_T, \LRl{\pd{p}{T}}_V$
\item Internal energy $e(V, T)$ 
\item Specific entropy $s(V, T)$
\item Speed of sound $c(V, T)$ for a Lax-Friedrichs interface flux.
\end{enumerate}
These can then be used to calculate the Gibbs free energy $g = h - T s$ and enthalpy $h = e + p V$, as well as the mapping between conservative and entropy variables and the entropy Hessian inverse $\LRp{\pdn{S}{\bm{u}}{2}}^{-1}$ (or entropy variable Jacobian $\pd{\bm{u}}{\bm{v}}$).

\begin{remark}
Temperature $T(V, e)$ is evaluated in terms of specific volume and internal energy. This is implemented by inverting the internal energy relation $e(V, T)$ at fixed $V$ using a Newton solver, which is guaranteed to converge for admissible solution states such that $\LRp{\pd{e}{T}}_V = c_v > 0$. 
\end{remark}

%
%
%

\section{Entropy correction via artificial viscosity}

Another entropy correction approach is based on artificial viscosity (AV) \cite{chan2025artificial}, which adds a Laplacian artificial viscosity term to \eqref{eq:dgform} with an artificial viscosity coefficient calculated based on $\delta_k(\bm{u}_h)$ in \eqref{eq:vol_entropy_residual}. We note that ECAV is closely related to the local entropy correction terms proposed in \cite{abgrall2018general, abgrall2022reinterpretation} and extended in \cite{edoh2024conservative, picklo2025entropy, gaburro2023high, mantri2024fully} and other papers. ECAV is also related to recent versions of entropy viscosity which add regularization proportional to the violation of the chain rule \eqref{eq:entropychainrule} \cite{guermond2019invariant}. 

The implementation of entropy correction using AV utilizes the inverse Hessian of the scalar entropy to discretize the Laplacian $\Delta \bm{u}$ as $\nabla \cdot \LRp{\pd{\bm{u}}{\bm{v}} \nabla \bm{v}(\bm{u})}$, where 
\[
\pd{\bm{u}}{\bm{v}} = \LRp{\pd{\bm{v}}{\bm{u}}}^{-1} = \LRp{\pdn{S}{\bm{u}}{2}}^{-1}.
\]
It is possible to calculate this analytically for a general EOS by factoring this into the product of Jacobian matrices with respect to primitive variables $\bm{q} = \LRs{V, v_1, v_2, T}$
\[
\pd{\bm{v}}{\bm{u}} = \pd{\bm{v}}{\bm{q}} \pd{\bm{q}}{\bm{u}}.
\]
Because the last entropy variable is $-T^{-1}$, this choice of variables implies that $\pd{\bm{v}}{\bm{q}}, \pd{\bm{q}}{\bm{u}}$ are lower and upper triangular, respectively. The inverse of $\pd{\bm{v}}{\bm{u}}$ can now be calculated as
\[
\pd{\bm{u}}{\bm{v}}  = \LRp{\pd{\bm{v}}{\bm{u}}}^{-1} = \pd{\bm{q}}{\bm{u}}^{-1}\pd{\bm{v}}{\bm{q}}^{-1} = \pd{\bm{u}}{\bm{q}} \pd{\bm{q}}{\bm{v}}.
\]
Moreover, the entries of $\pd{\bm{u}}{\bm{q}}, \pd{\bm{q}}{\bm{v}}$ can be calculated explicitly using fundamental thermodynamic relations and derivatives of pressure with respect to $V, T$:
\[
\pd{\bm{u}}{\bm{q}}
=
\begin{bmatrix}
-\rho^2	&			&			& \\
-v_1 \rho^2	& \rho		&			& \\
-v_2 \rho^2	& 0			& \rho		& \\
\pd{\rho e}{V}	& \rho v_1	& \rho v_2	& \pd{\rho e}{T}
\end{bmatrix},
\qquad
\pd{\bm{q}}{\bm{v}}
=
\begin{bmatrix}
a		& a{v_1}	& a{v_2}	& ab \\
		& T			& 0			& v_1 T \\
		&			& T			& v_2 T \\
		&			&			& T^2
\end{bmatrix}.
\]
where $a = \LRp{\LRl{\pd{(g/T)}{V}}_T}^{-1}$ and $b = \frac{1}{2}\LRp{v_1^2 + v_2^2} - T^2\LRl{\pd{(g/T)}{T}}_V$.
Additional thermodynamic quantities used to calculate these matrices can be expressed in terms of expressions involving only derivatives of pressure and internal energy:
\begin{gather*}
\pd{\rho e}{V}  = -\rho^2 e -  \frac{1}{2}\rho^2\LRp{v_1^2 + v_2^2} + \rho\LRl{\pd{e}{V}}_T, \qquad \pd{\rho e}{T}  = \rho \LRl{\pd{e}{T}}_V, \\
\LRl{\pd{e}{V}}_T = T  \LRl{\pd{p}{T}}_V - p(V, T), \qquad \LRl{\pd{e}{T}}_V = c_v,
\\
\LRl{\pd{(g/T)}{V}}_T = \frac{V}{T} \LRl{\pd{p}{V}}_T, \qquad \LRl{\pd{(g/T)}{T}}_V = \frac{V  T  \LRl{\pd{p}{T}}_V - h}{T^2},
\end{gather*}
where $h = e + p V$ is the enthalpy and $c_v$ is the specific heat at constant volume.

\begin{remark}
Because entropy correction AV directly uses the entropy Hessian, it can fail if the entropy is non-convex and the Hessian is singular. In contrast, a discrete graph viscosity is entropy dissipative for any convex entropy, for example ones known to exist only under mild hyperbolicity assumptions \cite{harten1998convex}. This can be avoided using the FCT-based entropy correction, or an AV formulation based on a discrete ``graph'' viscosity formulation as was done in \cite{christner2025entropyfd}.
\end{remark}

\subsection{Stiffened gas}
\label{sec:SG}

We examine the stiffened gas EOS, which is commonly used to model nearly incompressible fluids (e.g. water), and is frequently represented by the following relations
\begin{gather*}
    e(p, \rho) = \frac{p + \gamma p_{\infty}}{(\gamma - 1)\rho} + q, \qquad
    \rho(p, T) = \frac{p + p_\infty}{(\gamma - 1)c_vT}, \\
    s(p, \rho) = c_v\log\bigg(\frac{p + p_\infty}{\rho^\gamma}\bigg), \qquad c^2(p, \rho) = \frac{\gamma(p + p_\infty)}{\rho},
\end{gather*}
where $p_\infty, q$ are species-dependent parameters. These variables can all be expressed as functions of $V, T$ as shown below.
\begin{gather*}
    p(V, T) = \frac{(\gamma - 1)c_vT}{V} - p_\infty, \qquad e(V, T) = c_vT + p_\infty V + q \\
    s(V, T) = c_v\log\Big((\gamma - 1)c_vTV^{\gamma - 1}\Big), \qquad c^2(V, T) = (\gamma - 1)\gamma c_vT
\end{gather*}
As noted in Remark~\ref{remark:PEC_condition}, under a stiffened gas EOS, the PEC conditions \eqref{eq:equiv_pe_condition} and \eqref{eq:internal_energy_avg} are automatically satisfied for a standard DG formulation (e.g., a flux differencing DG formulation with central fluxes) and a local Lax-Friedrichs surface flux. Thus, pressure equilibrium errors are not an issue for this example. 

We examine a 1D quasi-compressible Euler system of equations governed by the stiffened gas (SG) EOS. As noted in Lemma 1 of \cite{chan2024high}, an entropy for the quasi-1D compressible Euler equations is simply the entropy of the standard 1D compressible Euler equations scaled by the nozzle width $a(x)$. Additionally, under this choice of entropy, the entropy variables for the quasi-1D system are identical to the entropy variables for the original 1D Euler equations. We use this to construct the Hessian matrix for ECAV. 

\begin{remark}
Since the quasi-1D Euler equations are a non-conservative system, the construction of a subcell finite volume scheme for the FCT-based entropy correction is more involved \cite{rueda2022subcell}. In contrast, the AV-based entropy correction is agnostic to whether a system is conservative or non-conservative. 
\end{remark}

The problem setup, boundary conditions, and SG parameters are detailed in \cite{berry2010quasinozzle, delchini2015quasinozzle}, and are reproduced here for convenience. We consider two different types of fluid (liquid water and steam), while the rest of the problem settings remain the same. The SG EOS parameters are listed below in Table \ref{table:sg_params}. The domain is $[0, 1]$ and the nozzle area $A(x) = 1 + 0.5\cos(2\pi x)$ is allowed to vary spatially but is constant in time. The initial condition is given by 
\[
p = (p_{\text{outflow}} - p_{\text{inflow}})x + p_{\text{inflow}}, \qquad v_1 = 0, \qquad T= 453,
\]
where $p_{\text{outflow}} = 0.5\text{ MPa}$ and $p_{\text{inflow}} = \text{1 MPa}$. The stagnation inlet and subsonic static outlet pressure condition are imposed at the left and right boundaries, respectively. The numerical solution is allowed to reach steady state by simulating up to time $T = 0.3$. Note that these experiments are intended only to illustrate the performance of ECAV; efficient exact solvers exist for the quasi-1D Euler equations with stiffened gas EOS \cite{yeom2019efficient} (see, for example, Appendix C in \cite{berry2010quasinozzle}). 
\begin{table}[h]
\centering
\begin{tabular}{lcccc}
\hline
Water &
$\gamma$ &
$q\;(\mathrm{J\,kg^{-1}})$ &
$p_{\infty}\;(\mathrm{Pa})$ &
$c_{v}\;(\mathrm{J\,kg^{-1}\,K^{-1}})$ \\
\hline
Liquid &
2.35 &
$-1167\times 10^{3}$ &
$10^{9}$ &
1816 \\
Steam &
1.43 &
$2030\times 10^{3}$ &
0 &
1040 \\
\hline
\end{tabular}
\caption{SG EOS parameters for 1D quasi-compressible Euler equations.}
\label{table:sg_params}
\end{table}
Figure \ref{fig:nozzle_water} and \ref{fig:nozzle_steam} show numerical solutions to the 1D nozzle flow problem. Each flow exhibits different behaviors due to differences in the stiffness parameter $p_\infty$. For water, the initial pressure ratio is small, and the flow remains subsonic. In contrast, for steam, the ratio is large enough for the system to develop a shock as the flow accelerates through the nozzle. 

Due to the initial condition and subsonic nature of the flow, the solution for liquid water is smooth and behaves similarly with and without ECAV. However, the simulation using steam parameters crashes for both degree $N = 3, 7$ approximations without entropy correction stabilization due to the presence of strong shocks. 
\begin{figure}[h!]
    \centering
    \vspace{1em} 
    \begin{subfigure}[b]{0.45\textwidth}
        \centering
        \includegraphics[width=\textwidth]{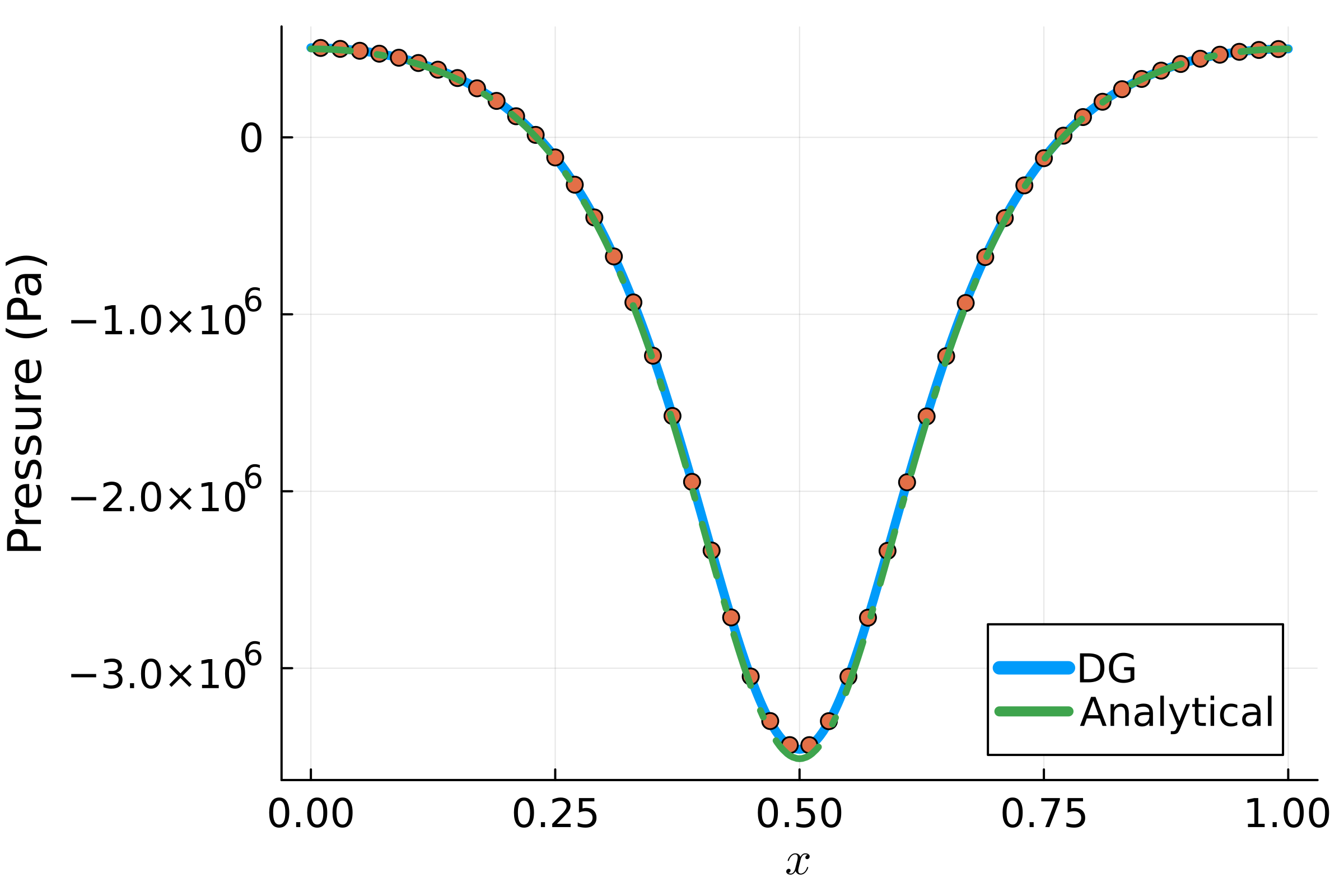}
        \caption{Pressure}
        \label{fig:5}
    \end{subfigure}
    \begin{subfigure}[b]{0.45\textwidth}
        \centering
        \includegraphics[width=\textwidth]{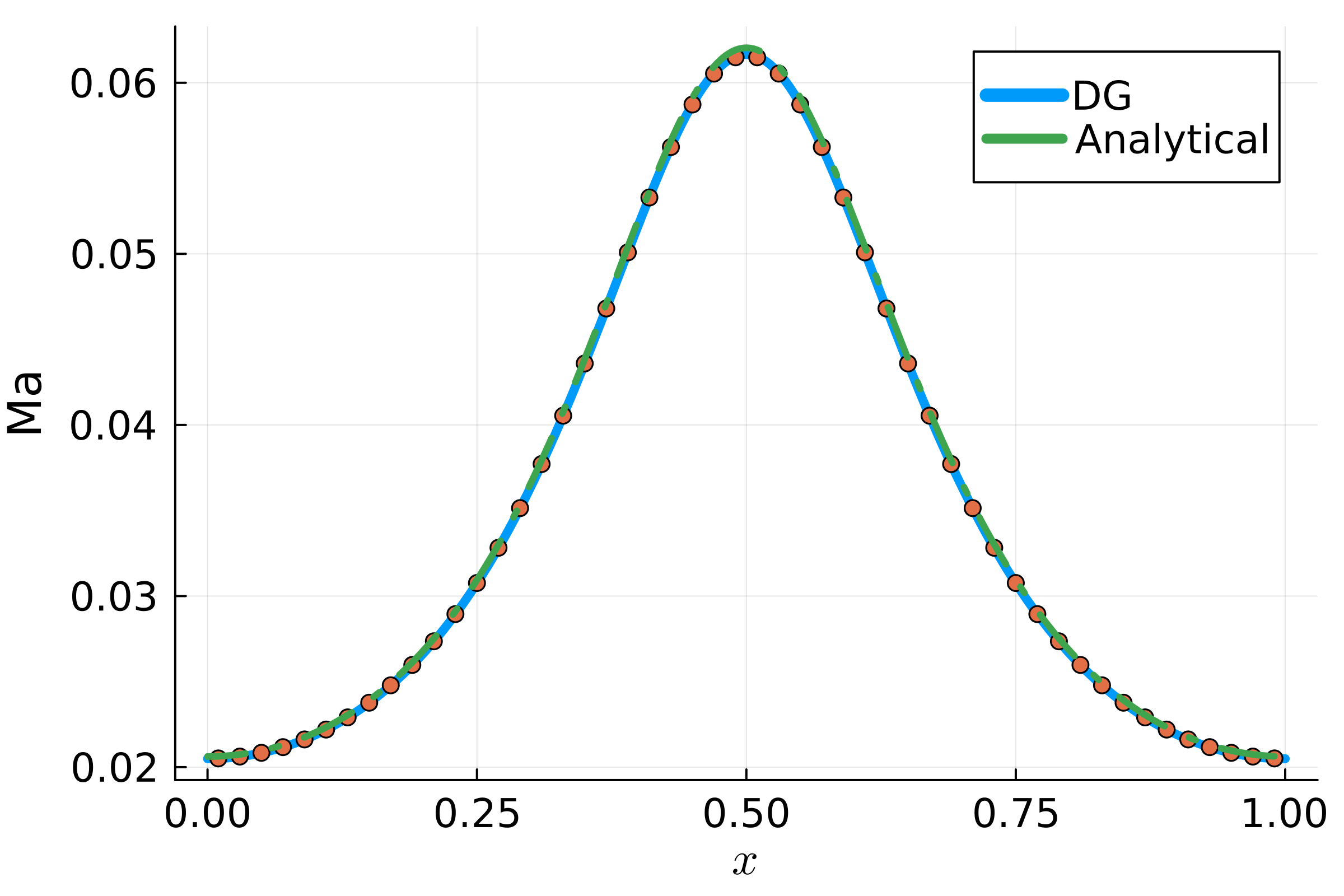}
        \caption{Mach number}
        \label{fig:6}
    \end{subfigure}

    \caption{Steady state solution for a liquid water flowing through 1D converging-diverging nozzle using degree $N = 7$ approximation and $K = 50$ elements. The $N=3$, $K=100$ element solution is omitted as it is visually indistinguishable from the $N=7$ solution. Circles denote DG cell averages. }
    \label{fig:nozzle_water}
\end{figure}

\begin{figure}[h!]
    \centering
    \begin{subfigure}[b]{0.45\textwidth}
        \centering
        \includegraphics[width=\textwidth]{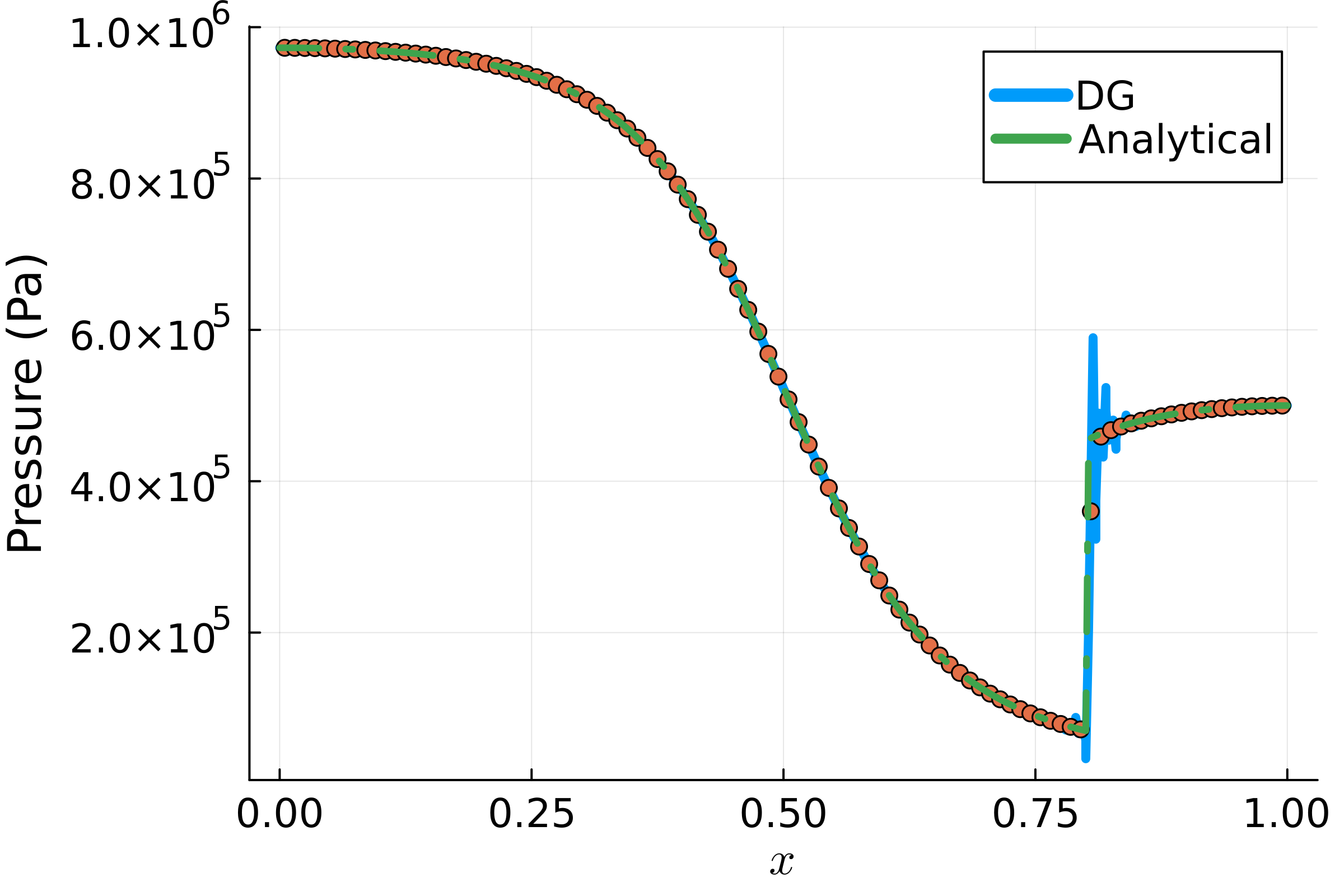}
        \caption{Pressure, $N = 3, K = 100$}
        \label{fig:2}
    \end{subfigure}
    \begin{subfigure}[b]{0.45\textwidth}
        \centering
        \includegraphics[width=\textwidth]{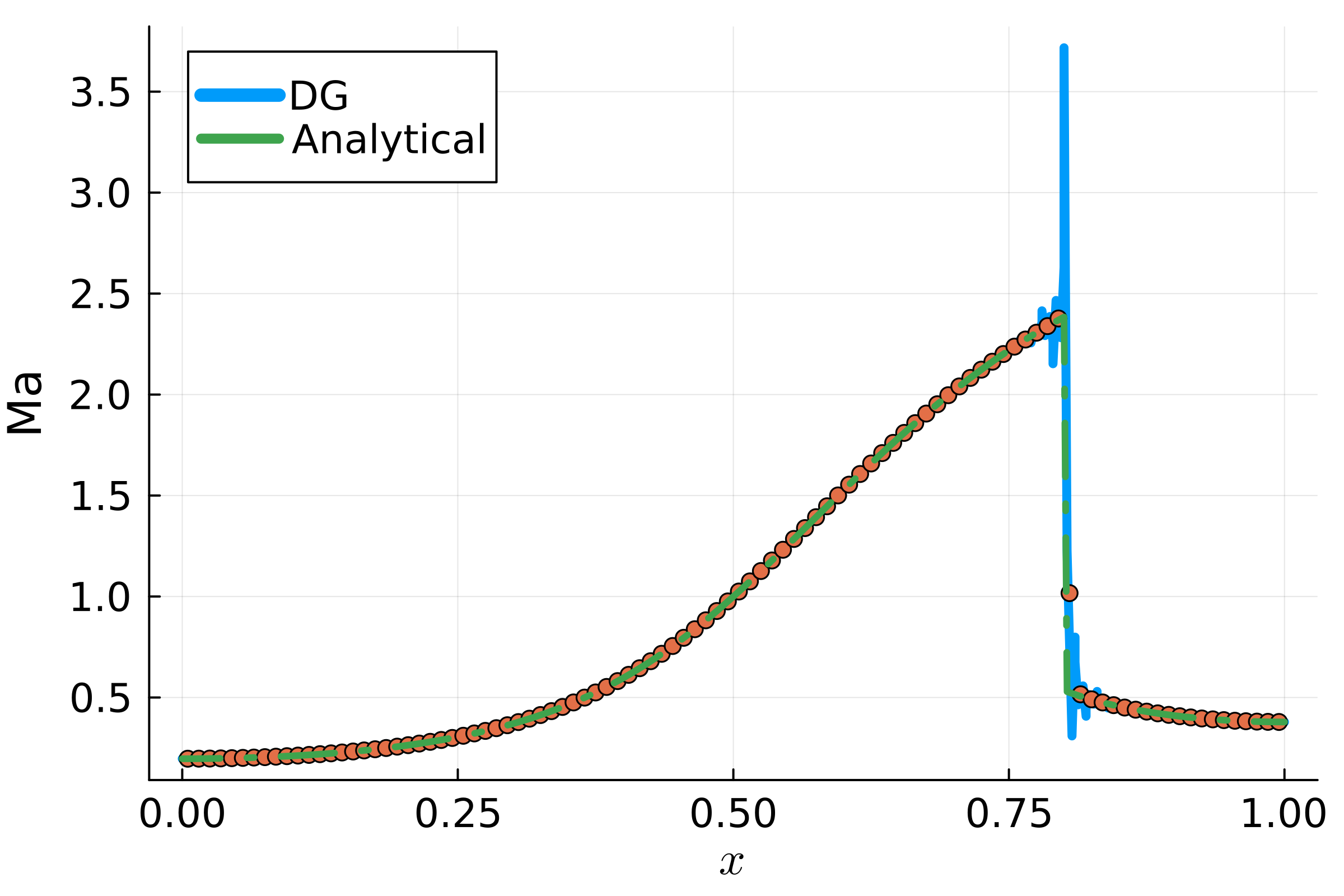}
        \caption{Mach number, $N = 3, K = 100$}
        \label{fig:3}
    \end{subfigure}
    \vspace{1em} 
    \begin{subfigure}[b]{0.45\textwidth}
        \centering
        \includegraphics[width=\textwidth]{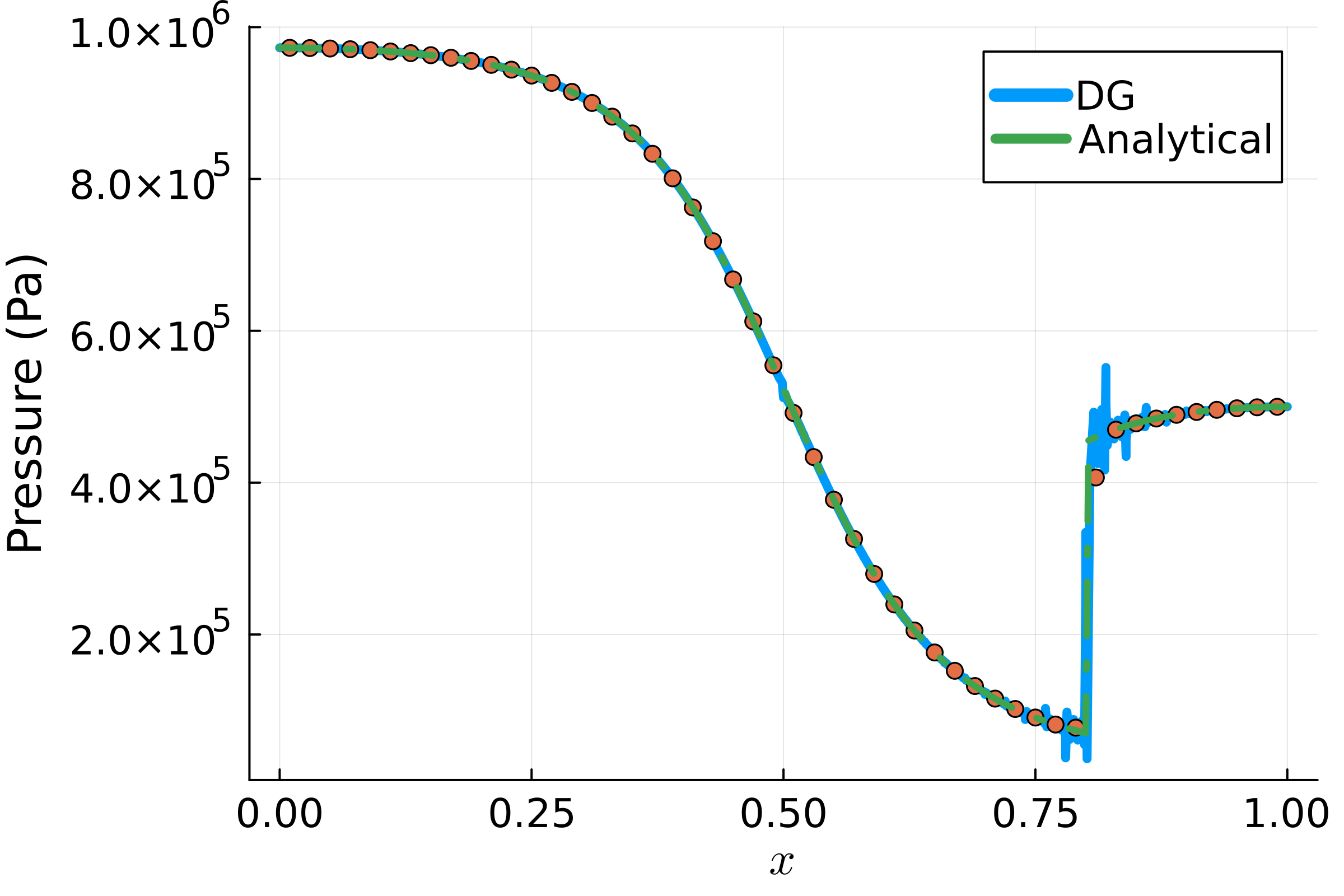}
        \caption{Pressure, $N = 7, K = 50$}
        \label{fig:5}
    \end{subfigure}
    \begin{subfigure}[b]{0.45\textwidth}
        \centering
        \includegraphics[width=\textwidth]{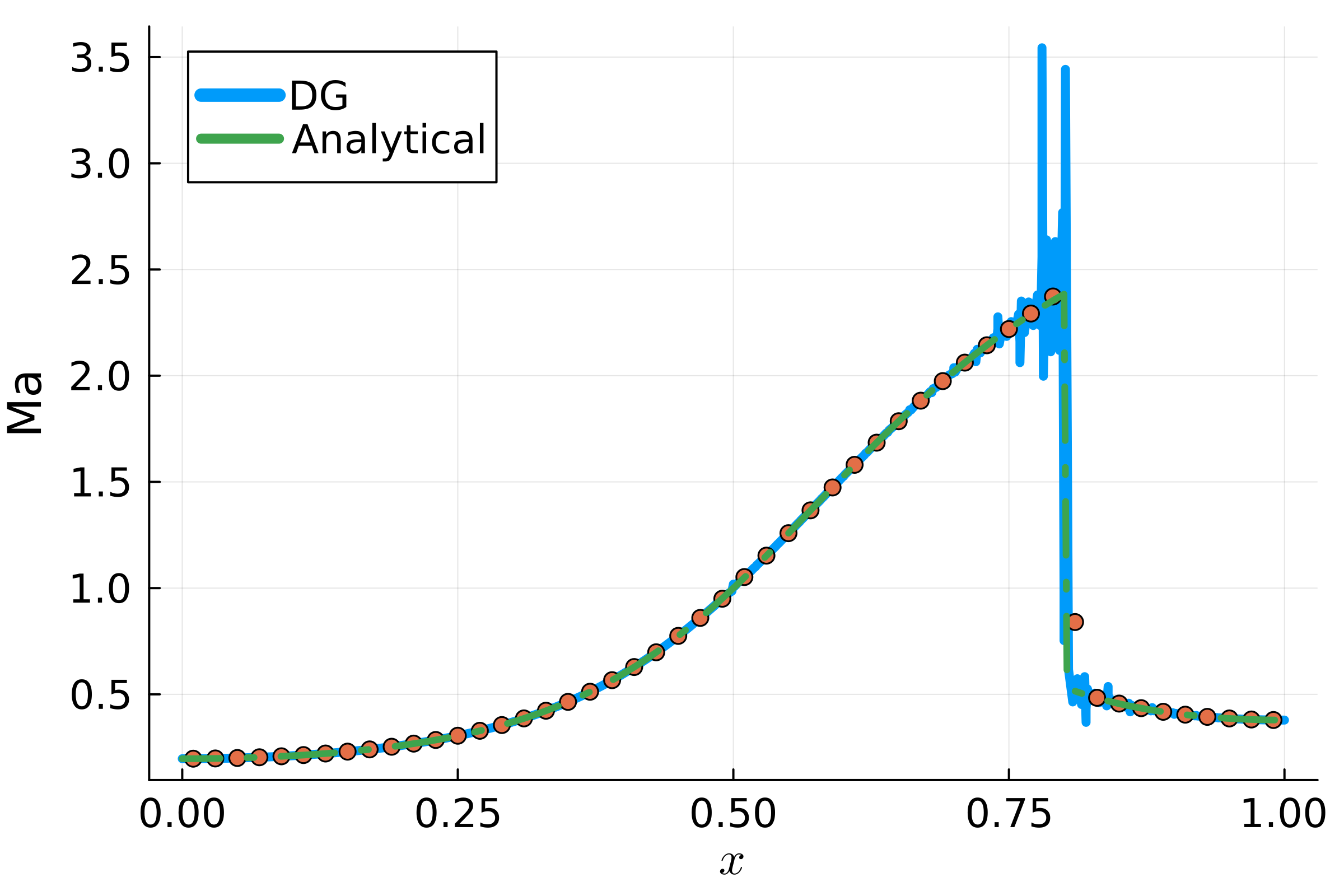}
        \caption{Mach number, $N = 7, K = 50$}
        \label{fig:6}
    \end{subfigure}

    \caption{Steady state solution for a vapor phase (steam) through a 1D converging-diverging nozzle using polynomial degree $N = 3$ and $100$ elements (first row) and degree $N = 7$ and $50$ elements (second row). Circles denote DG cell averages. }
    \label{fig:nozzle_steam}
\end{figure}

\bibliographystyle{plain}
\bibliography{reference.bib}

\end{document}